\documentclass[12pt]{extarticle}
\usepackage{amsmath, amsthm, amssymb, xcolor}

\usepackage[colorlinks=true,linkcolor=blue,urlcolor=blue]{hyperref}
\usepackage{graphicx}
\usepackage{caption}
\usepackage{mathtools}
\usepackage{enumerate}
\usepackage{verbatim}
\usepackage{booktabs}
\usepackage{tikz,tikz-cd,tikz-3dplot}
\usepackage{amssymb}
\usetikzlibrary{matrix}
\usetikzlibrary{arrows}
\usetikzlibrary{positioning}
\usepackage{algorithm}
\usepackage{nicematrix}
\usepackage[noend]{algpseudocode}
\usepackage{caption}
\usepackage[normalem]{ulem}
\usepackage{subcaption}
\oddsidemargin \evensidemargin
\usepackage{makecell}
\usepackage{array}

\definecolor{wine}{HTML}{CE2F23}
\definecolor{salmon}{HTML}{EB8677}
\definecolor{mag}{HTML}{E087E8}
\definecolor{gre}{HTML}{74B638}
\definecolor{yel}{HTML}{BFBF41}

\newtheorem{theorem}{Theorem}
\newtheorem{proposition}[theorem]{Proposition}
\newtheorem{lemma}[theorem]{Lemma}
\newtheorem{corollary}[theorem]{Corollary}

\theoremstyle{definition}

\newtheorem{defprop}[theorem]{Definition/Proposition}

\newtheorem{remark}[theorem]{Remark}

\newtheorem{example}[theorem]{Example}

\numberwithin{theorem}{section}

\usepackage{thm-restate}
\newcommand{\PP}{\mathbb{P}}
\newcommand{\RR}{\mathbb{R}}

\newcommand{\CC}{\mathbb{C}}

\title{\bf Euclidean Distance Optimization\\ Within the Grassmannian}

\author{Hannah Friedman, Serkan Ho\c{s}ten and Andrea Rosana}

\date{}
\begin{document}
\maketitle

\begin{abstract}
\noindent 
Given a subvariety of the Grassmannian and a data point, we seek to find a point on the subvariety minimizing the Euclidean distance to the data point. 
The number of complex critical points of this optimization problem is the Euclidean distance (ED) degree. We show that the ED discriminant of the whole Grassmannian, that is, the set of data points with a number of critical points different than the ED degree, is the discriminant of the characteristic polynomial of the data as a projection matrix. Another closely connected algebraic complexity measure for a subvariety in the Grassmannian is the Grassmann distance (GD) degree, which is the number of complex critical points of the distance optimization problem when the data point itself is in the Grassmannian.  
We give formulae for ED and GD degrees of geometrically meaningful subvarieties of the Grassmannian, namely, products of smaller Grassmannians, matroid realization varieties, and Schubert varieties. 
\end{abstract}

\section{Introduction}

Our main object of study is the Grassmannian, realized as the affine variety of rank $k$ orthogonal projection matrices: 
\begin{equation}\label{eq:pGr}
 \mathrm{pGr}(k,n) = \{P \in \mathrm{Sym}^2\CC^n \,:\, P^2 = P \mbox{ and } \mathrm{trace}(P) =k\}. 
 \end{equation}
Following \cite{DFRS}, we call this the \emph{projection Grassmannian} to distinguish it from the Pl\"ucker embedding of the Grassmannian $\mathrm{Gr}(k,n) \subseteq \PP^{\binom{n}{k}-1}$; see Section~\ref{sec:EDopt}. 
Let $\mathcal{M}$ be an irreducible subvariety  of $\mathrm{pGr}(k,n)$ that is defined over $\RR$ and has Zariski dense real points.
We call $\mathcal{M}$ a \emph{model}, inspired by 
algebraic statistics. 
Given a data point $Q \in \mathrm{Sym}^2 \mathbb R^n$, we consider the problem of finding the closest point to $Q$ on $\mathcal M$:
\begin{equation} \label{eq:Euclidean-distance-opt}
 \textrm{minimize} \quad || P - Q||_2^2 \quad \textrm{subject to} \quad P \in \mathcal{M}(\RR) \subset \mathrm{pGr}(k,n).
\end{equation}
Via $||P-Q||_2^2 = \mathrm{trace}(P^2) - 2 \mathrm{trace}(PQ) + \mathrm{trace}(Q^2)$, the Euclidean distance optimization problem \eqref{eq:Euclidean-distance-opt} is equivalent to
\begin{equation} \label{eq:trace-opt}
 \textrm{maximize} \quad \mathrm{trace}(PQ) \quad \textrm{subject to} \quad P \in \mathcal{M}(\RR) \subset \mathrm{pGr}(k,n).
\end{equation}
The \emph{Euclidean distance (ED) degree} \cite{DHOST} of $\mathcal M$ is the number of complex critical points of \eqref{eq:trace-opt} (or, equivalently, of \eqref{eq:Euclidean-distance-opt})  for generic data $Q \in \mathrm{Sym}^2\mathbb R^n$.

When $\mathcal M = \mathrm{pGr}(k,n)$, the problem \eqref{eq:trace-opt} becomes the multieigenvector problem \cite{FH}, and for generic $Q$, its critical points are the $\binom{n}{k}$ $Q$-invariant subspaces of $\mathbb R^n$.
Our first main result makes precise the word ``generic," thereby describing the ED discriminant \cite[Section 7]{DHOST} of the projection Grassmannian. 
\begin{theorem}\label{thm:disc1}
    For $\mathcal M = \mathrm{pGr}(k,n)$, the optimization problem \eqref{eq:trace-opt} has $\binom n k$ critical points if and only if all eigenvalues of $Q$ have multiplicity one.
\end{theorem}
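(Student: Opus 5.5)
The plan is to compute the critical equations of \eqref{eq:trace-opt} on $\mathrm{pGr}(k,n)$ explicitly and show that they reduce to a commutation condition $PQ=QP$. Throughout I assume $0<k<n$. For $k\in\{0,n\}$ the variety is a single point, so both sides of the equivalence need separate care there.

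\emph{Step 1: the tangent space and the critical equations.} Differentiate $P^2=P$ at a point $P\in\mathrm{pGr}(k,n)$. A tangent vector $X\in\mathrm{Sym}^2\CC^n$ must satisfy $PX+XP=X$. Conversely, every matrix of the form $X=PY(I-P)+(I-P)Y^{T}P$ with $Y$ arbitrary satisfies this. A dimension count should show that these matrices fill out the tangent space; both have dimension $k(n-k)=\dim \mathrm{Gr}(k,n)$. The objective $\mathrm{trace}(PQ)$ is linear, so $P$ is critical iff $\mathrm{trace}(XQ)=0$ for all such $X$. This gives $\mathrm{trace}\big(Y(I-P)QP\big)=0$ for all $Y$, i.e.
\[
(I-P)QP=0 .
\]
So $QP=PQP$. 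Transposing, using $P=P^T$ and $Q=Q^T$, gives $PQ=PQP$. Hence the critical points are exactly the $P\in\mathrm{pGr}(k,n)$ with $PQ=QP$. I expect the main care here to go into checking that the tangent space has the stated form at every point, not just generic ones. The point is that $\mathrm{pGr}(k,n)$ is smooth, being an orbit of $O_n(\CC)$ acting by conjugation.

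\emph{Step 2: reduction to diagonal $Q$.} Since $Q$ is real symmetric, $Q=O D O^T$ with $O$ real orthogonal and $D$ diagonal. Conjugation by $O$ preserves $\mathrm{pGr}(k,n)$ and the commutation condition. So we may assume $Q=D=\mathrm{diag}(\lambda_1 I_{m_1},\dots,\lambda_r I_{m_r})$ with the $\lambda_i$ distinct.

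A matrix commutes with $D$ iff it is block diagonal with respect to the eigenspaces. So the critical points are $P=\mathrm{diag}(P_1,\dots,P_r)$ with each $P_i\in\mathrm{pGr}(j_i,m_i)$ and $\sum j_i=k$. In other words, the critical locus is
\[
\bigsqcup_{j_1+\cdots+j_r=k}\ \prod_{i}\mathrm{pGr}(j_i,m_i).
\]

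\emph{Step 3: counting.} If all $m_i=1$, each $P_i\in\{0,1\}$. The critical points are then the $\binom nk$ diagonal $0/1$ matrices of trace $k$. This proves the ``if'' direction.

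Conversely, suppose some $m=m_i\ge 2$. Choose $j=j_i$ with
\[
\max(1,\,k-n+m)\le j\le \min(m-1,\,k).
\]
This interval is nonempty exactly because $1\le k\le n-1$ and $m\ge 2$. Then distribute the remaining rank $k-j\in[0,n-m]$ among the other blocks. The factor $\mathrm{pGr}(j,m)$ has positive dimension $j(m-j)$, so there are infinitely many critical points, not $\binom nk$.

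If one instead counts critical points with multiplicity, the same decomposition shows the critical scheme is non-reduced or positive-dimensional precisely in this case. This yields the ``only if'' direction.
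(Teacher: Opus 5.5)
For real data $Q\in\mathrm{Sym}^2\RR^n$, which is the literal setting of \eqref{eq:trace-opt}, your argument is correct and follows the paper's route. Step~1 is Proposition~\ref{prop:cps}; you derive $(I-P)QP=0$ intrinsically from $PX+XP=X$ rather than conjugating $P$ to $\mathrm{Id}_k\oplus 0_{n-k}$. Steps~2--3 are the paper's discussion of diagonalizable $Q$ just before Theorem~\ref{thm:disc}. You make that discussion slightly more careful by exhibiting the family $\prod_i\mathrm{pGr}(j_i,m_i)$ of symmetric projections, rather than just infinitely many invariant subspaces. Your caveat about $k\in\{0,n\}$ is justified: there the ``only if'' direction is actually false for $n\ge 2$, so $1\le k\le n-1$ is implicitly assumed.

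What your argument does not cover is the complex version. That is the form in which the paper proves the statement: Theorem~\ref{thm:disc}, for $Q\in\mathrm{Sym}^2\CC^n$, which describes the ED discriminant.

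\begin{itemize}
\item Step~2 breaks down there, because complex symmetric matrices need not be diagonalizable. For example, $Q=\begin{pmatrix}1&i\\ i&-1\end{pmatrix}$ satisfies $Q^2=0\neq Q$.
\item Your proof does extend verbatim to diagonalizable complex $Q$, since these are diagonalized by complex orthogonal matrices. But diagonalizable $Q$ with a repeated eigenvalue form only a codimension-two subset of $\mathrm{Sym}^2\CC^n$. By contrast, $\Delta$ is a hypersurface whose general points have a single $2\times 2$ Jordan block.
\item This cannot be seen over $\RR$, where every symmetric matrix is diagonalizable and the discriminant (a sum of squares) vanishes only in codimension two.
\end{itemize}

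For such defective $Q$ the critical set is finite, and the count drops for a different reason. A $Q$-invariant subspace that breaks up a Jordan chain is degenerate for the bilinear form $x^\top y$ and admits no symmetric projection. In the $2\times 2$ example with $k=1$, the only eigenline is spanned by the isotropic vector $(1,i)$, so there are no critical points at all. The paper handles this with Proposition~\ref{prop:gen-eigen} and Theorem~\ref{thm:defective}. The latter gives the number of critical points as the coefficient of $x^k$ in $\prod_{s\in\mathcal P}(1+x^s)$. For one $2\times 2$ block and otherwise simple eigenvalues this is $\binom{n-2}{k}+\binom{n-2}{k-2}<\binom nk$. If the theorem is meant over $\CC$, this Jordan-chain analysis is the missing ingredient.
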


Our main focus is to study the problem \eqref{eq:trace-opt} for data which lies not only in the ED discriminant of $\mathrm{pGr}(k,n)$, but in the projection Grassmannian itself.
The \emph{Grassmann distance (GD) degree} (Definition~\ref{def:GD}) is the number of complex critical points of \eqref{eq:trace-opt} for generic $Q~\in~\mathrm{pGr}(k,n)$.
The difference between the ED and GD degrees is central to our work.

The GD degree was first introduced in \cite{FRS} for $k = 2$ as an algebraic counterpart to the Grassmann distance complexity \cite{LR}. It was left open whether the GD degree is finite \cite[Remark 4.4]{FRS}. We prove the finiteness for $k=2$. 
\begin{restatable}{theorem}{gdfinite} \label{thm:gd-finite}
    Every model in $\mathrm{pGr}(2,n)$ has finite GD degree.
\end{restatable}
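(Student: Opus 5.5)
The plan is to use an incidence-correspondence dimension count, exploiting the fact that for $k=2$ the relevant normal data can be written very explicitly. The GD degree of $\mathcal M$ is finite as soon as, for generic $Q\in\mathrm{pGr}(2,n)$, the critical set of $P\mapsto \mathrm{trace}(PQ)$ on $\mathcal M_{\mathrm{reg}}$ is finite. Here a smooth point $P$ is critical exactly when $Q\perp T_P\mathcal M$ with respect to the trace form. So consider
\[
X=\{(P,Q)\in \mathcal M_{\mathrm{reg}}\times \mathrm{pGr}(2,n) \,:\, \mathrm{trace}(QT)=0 \text{ for all } T\in T_P\mathcal M\},
\]
with its projection $\pi_2:X\to \mathrm{pGr}(2,n)$. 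It suffices to show that every irreducible component of $X$ dominating $\mathrm{pGr}(2,n)$ has dimension at most $\dim \mathrm{pGr}(2,n)=2(n-2)$. For such a component the generic fibre of $\pi_2$ then has dimension $0$, while non-dominating components are invisible to generic $Q$. (Singular points of $\mathcal M$ can be handled in the usual way by taking the closure of the smooth-point correspondence, exactly as in the ED degree setting of \cite{DHOST}.)

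To bound the fibres of the first projection $\pi_1:X\to\mathcal M_{\mathrm{reg}}$, I use the block structure at a fixed $P$. Tangent vectors to $\mathrm{pGr}(2,n)$ at $P$ have the form $T=A+A^{T}$ with $A\in \mathrm{Hom}(\mathrm{im}P,\ker P)\cong \CC^{(n-2)\times 2}$. Hence $T_P\mathcal M$ corresponds to a $d$-dimensional linear subspace $W_P\subseteq \CC^{(n-2)\times 2}$, where $d=\dim\mathcal M$. Moreover,
\[
\mathrm{trace}(QT)=2\,\mathrm{trace}\bigl(B_Q^{T}A\bigr), \qquad B_Q:=(I-P)\,Q\,P .
\]
Thus the fibre $\pi_1^{-1}(P)$ is the preimage of the codimension-$d$ linear space $W_P^{\perp}$ under the map
\[
\beta_P:\mathrm{pGr}(2,n)\to \CC^{(n-2)\times 2},\qquad Q\mapsto B_Q .
\]
The point of $k=2$ (and in fact of general $k$) is that source and target of $\beta_P$ both have dimension $2(n-2)$. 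Write $\mathrm{im}\,Q$ as the column span of $\binom{C}{D}$, with $C$ a $2\times 2$ block in $\mathrm{im}P$ and $D$ an $(n-2)\times 2$ block in $\ker P$. Then
\[
B_Q=D\,(C^{T}C+D^{T}D)^{-1}C^{T}.
\]
From this formula one sees that $\beta_P$ is generically finite, so on the open set $U_P$ where $\beta_P$ is quasi-finite we get $\dim\bigl(\pi_1^{-1}(P)\cap U_P\bigr)\le 2(n-2)-d$. The part of $X$ lying over these good loci therefore has dimension at most $2(n-2)$.

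The main obstacle is the degenerate locus of $\beta_P$, where its fibres are positive-dimensional. From the formula, degeneration occurs when $C$ is singular, i.e.\ $\mathrm{im}Q\cap\ker P\neq 0$, or when $C^{T}C+D^{T}D$ is degenerate, i.e.\ isotropic phenomena in the complex orthogonal geometry. For $k=2$ I expect to stratify these loci by hand, using the rank of $C$ (at most $1$) and whether $\mathrm{im}Q$ meets $\ker P$ or contains isotropic vectors. On each stratum $S_P$, I would show that the incidence condition still cuts the dimension enough that $\dim\{(P,Q):Q\in S_P\cap \beta_P^{-1}(W_P^{\perp})\}<2(n-2)$. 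Where that fails, I would instead show that the corresponding component of $X$ projects into a proper closed subset of $\mathrm{pGr}(2,n)$, such as Schubert-type conditions on $Q$ relative to $\mathcal M$ or the isotropic locus. This is the step I expect to require genuine work.

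My first attempt at the hard step is an $O(n)$-equivariance argument. Replacing $Q$ by $gQ_0g^{T}$ for generic $g\in O(n)$ turns the question into one about $L_{Q_0}=\mathrm{trace}(\cdot\,Q_0)$ on translates $g^{T}\mathcal M g$. Positive-dimensional critical families of $L_{Q_0}$ on all of $\mathrm{pGr}(2,n)$ are explicitly known: they are the $P$ commuting with $Q_0$, namely $P\subseteq\ker Q_0$, or $P$ spanned by one line in $\mathrm{im}Q_0$ and one in $\ker Q_0$. A Kleiman-type transversality argument would then show that a generic translate of $\mathcal M$ meets each relevant stratum properly, so that critical points of $L_{Q_0}$ on the translate are isolated.
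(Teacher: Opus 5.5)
There is a genuine gap. It sits where you put the hard step, and two errors in the setup make it worse.

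\emph{The setup.} First, your $X$ does not remove $\mathrm{cut}(Q)$, so the criterion you reduce to is false in general. For $\mathcal M=\mathrm{pGr}(2,n)$ your $X$ is $\{(P,Q):PQ=QP\}$. Its fibre over a generic $Q$ contains $\Omega_1(Q)\cong\PP^1\times\PP^{n-3}$, yet the GD degree is $1$ (Corollary~\ref{cor:small-grass}). You must take the closure of the pairs with $P\notin\mathrm{cut}(Q)$, which also removes your stratum ``$C$ singular''.

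Second, the degeneracy locus of $\beta_P$ is misidentified. $C^\top C+D^\top D$ is the Gram matrix of a basis of $\mathrm{im}\,Q$, so it is invertible for every $Q\in\mathrm{pGr}(2,n)$, and your second stratum is empty. The true positive-dimensional fibres lie off the cut locus. There you may normalize $C=\mathrm{Id}_2$, which gives $B_Q=DS$ with $S=(\mathrm{Id}_2+D^\top D)^{-1}$. By Cayley--Hamilton, $B_Q^\top B_Q=S-S^2=(1-t)S+(\det S)\,\mathrm{Id}_2$, where $t=\mathrm{trace}(S)=\mathrm{trace}(PQ)$. For $t\neq1$ only finitely many $S$ share a given $B_Q$: they commute with $B_Q^\top B_Q$, and they are scalar when it is. On the hypersurface $\{\mathrm{trace}(PQ)=1\}$, however, $B_Q^\top B_Q$ is scalar, and $(S,D)\mapsto(RSR^\top,\,B_Q(RSR^\top)^{-1})$ with $R\in\mathrm{SO}(2,\CC)$ traces a curve in the fibre of $\beta_P$. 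Your stratification by the rank of $C$ and by isotropy never sees this stratum.

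\emph{The hard step.} Over this hypersurface no dimension bound can hold, because excess components really occur. Take the surface $\mathrm{Gr}(2,4)\cap\{x_{13}=x_{24},\,x_{14}+x_{23}=0\}$ of planes invariant under a complex structure; it contains the curve of Example~\ref{ex:nonirred}. On it $\mathrm{trace}(PQ)\equiv 1$ whenever $y_{12}+y_{34}=0$, so the corrected correspondence has a $5$-dimensional component, exceeding $\dim\mathrm{pGr}(2,4)=4$. What has to be proved is that such components do not dominate.

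Your Kleiman idea cannot give this, for two reasons. Critical points on a translate $g^\top\mathcal M g$ are not governed by the ambient critical set of $\mathrm{trace}(\cdot\,Q_0)$; that set lies in $\mathrm{cut}(Q_0)\cup\{Q_0\}$ and is discarded anyway. And once you lift to pairs (point, conormal vector), the $\mathrm{O}(n)$-action is no longer transitive, since the eigenvalues of $\xi^\top\xi$ are invariants.

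\emph{The missing idea} is the last step of the paper's proof. On the correspondence, $F(P,Q)=\mathrm{trace}(PQ)$ has differential $(U,V)\mapsto\mathrm{trace}(PV)$, because criticality kills $\mathrm{trace}(UQ)$. Suppose a dominating component lies in $\{F=1\}$. At a generic point $d\pi_2$ is onto $T_Q\mathrm{pGr}(2,n)$, so $\mathrm{trace}(PV)=0$ for all such $V$, i.e.\ $PQ=QP$. This forces $P\in\mathrm{cut}(Q)$ or $P=Q$, and the latter is impossible because $F(Q,Q)=2$.

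With these repairs, your fibre count over $P$ becomes a complete proof, organized differently from the paper's. The paper instead studies fibres of $(P,Q)\mapsto(Q,\mathrm{trace}(PQ))$. It uses the tangent spaces of that map's image to show that $(Q,t)$ determines the off-diagonal block of $P$ relative to $Q$ (the mirror image of your $B_Q$), and then applies the same Cayley--Hamilton identity to force $t=1$.
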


In this work, we deepen the study of the ED and GD degrees of natural classes of subvarieties of the Grassmannian.
We now give a brief overview of the main results for the three classes of models $\mathcal M$ we study.
Our first example is the product of two Grassmannians. 
For instance the SVD of a rectangular matrix yields a point in a product of two Grassmannians; see the discussion on correspondence analysis in \cite[Section 5.2]{FH}.
\begin{theorem}\label{thm:prod_Grassmannian}
    Suppose $k,r,s,j,\ell \in [n]$ are such that $k \leq n/2, j + \ell = k$, and $0 \leq s \leq r \leq r+s \leq n$.
    The ED degree of $\mathrm{pGr}(j, r) \times \mathrm{pGr}(\ell, s) \subseteq \mathrm{pGr}(k,n)$ is $\binom{r}{j} \cdot \binom{s}{\ell}$. 
    The GD degree is
    \begin{align*}
        \begin{cases}
            \binom{\min\{s,k\}}
{\ell} &\textrm{ if } n = r + s\\
            \binom{\min\{r,k\}}{j} \cdot \binom{\min\{s,k\}}
{\ell} &\textrm { elseif } r \leq n - k\\
            \binom{n-r}{\ell} \cdot \binom{s}{\ell} &\textrm { elseif } r > n - k.
        \end{cases}
    \end{align*}
    Furthermore, if the data point is real, all critical points are real.
\end{theorem}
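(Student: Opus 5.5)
The plan is to exploit the block structure of the embedding to decouple the problem into two independent multieigenvector problems. Concretely, I take $\mathrm{pGr}(j,r)\times\mathrm{pGr}(\ell,s)\subseteq\mathrm{pGr}(k,n)$ to mean the following. Fix orthogonal coordinate subspaces $R\cong\CC^r$ and $S\cong\CC^s$ of $\CC^n$. The model consists of the projections $P=P_1+P_2$, where $P_1$ projects onto a $j$-dimensional subspace of $R$ and $P_2$ onto an $\ell$-dimensional subspace of $S$. In block form with respect to $R\oplus S\oplus (R+S)^\perp$, $P=\mathrm{diag}(P_1,P_2,0)$. Hence
\[
\mathrm{trace}(PQ)=\mathrm{trace}(P_1Q_{RR})+\mathrm{trace}(P_2Q_{SS}),
\]
where $Q_{RR}$ and $Q_{SS}$ are the diagonal blocks (compressions) of $Q$. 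The critical points of \eqref{eq:trace-opt} on the product are therefore exactly the pairs $(P_1,P_2)$ with $P_1$ critical for $Q_{RR}$ on $\mathrm{pGr}(j,r)$ and $P_2$ critical for $Q_{SS}$ on $\mathrm{pGr}(\ell,s)$.

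\textbf{ED degree and reality.} For generic $Q\in\mathrm{Sym}^2\RR^n$ the blocks $Q_{RR}$ and $Q_{SS}$ are generic symmetric matrices, so they have simple eigenvalues. Theorem~\ref{thm:disc1} then gives $\binom rj$ and $\binom s\ell$ critical points respectively, namely the spans of $j$ (resp.\ $\ell$) eigenvectors, which yields the ED degree $\binom rj\binom s\ell$. Reality follows because real symmetric blocks have real eigenvectors, so every invariant subspace spanned by eigenvectors is real.

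\textbf{GD degree.} For the GD degree, take $Q$ generic in $\mathrm{pGr}(k,n)$, the projection onto a generic $k$-space $W$. I will compute the spectra of $Q_{RR}$ and $Q_{SS}$ via principal angles (CS decomposition) between $W$ and $R$, $S$:
\begin{itemize}
\item the eigenvalues of $Q_{RR}$ lie in $[0,1]$;
\item the eigenvalue $1$ has multiplicity $\dim(W\cap R)=\max(0,k+r-n)$;
\item the eigenvalue $0$ has multiplicity $r-\min(r,k)$ on top of that;
\item the remaining eigenvalues are generic and simple.
\end{itemize}
The same holds for $S$ with $s$ in place of $r$. The next step is a counting lemma for a symmetric matrix with some repeated eigenvalues. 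A $Q$-invariant $j$-subspace is an isolated critical point (the ones counted by Definition~\ref{def:GD}) only if it either contains or is orthogonal to each repeated eigenspace, and is otherwise spanned by eigenvectors of simple eigenvalues. I must check against that definition which of these configurations are actually counted. The formula suggests that only the configurations avoiding the kernel and containing the full $1$-eigenspace survive.

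Applying this lemma case by case:
\begin{itemize}
\item If $r\le n-k$, then $W\cap R=0$. Choosing $j$ of the $\min(r,k)$ nonzero eigenvalues gives $\binom{\min(r,k)}{j}$, and similarly $\binom{\min(s,k)}{\ell}$; since $s\le r$, the $S$-factor never sees eigenvalue $1$.
\item If $r>n-k$, then $P_1$ must contain the $(k+r-n)$-dimensional $1$-eigenspace and choose its remaining $n-r-\ell$ directions among the $n-r$ other nonzero simple eigenvalues. This gives $\binom{n-r}{n-r-\ell}=\binom{n-r}{\ell}$, times the $S$-count. I need $s\le n-r$ here so that this count is $\binom{s}{\ell}$.
\end{itemize}

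\textbf{Main obstacle.} I expect the hardest part to be the case $n=r+s$. There $R^\perp=S$, so the nontrivial spectra of $Q_{RR}$ and $Q_{SS}$ are linked: the CS decomposition pairs an eigenvalue $c^2$ of one block with $1-c^2$ of the other. The claimed count $\binom{\min(s,k)}{\ell}$ indicates that, once $P_2$ is chosen, $P_1$ is essentially forced, or that the other choices fail to be isolated or drop out. I would first write $Q$ in CS normal form, $W$ spanned by $\cos\theta_i e_i+\sin\theta_i f_i$ with $e_i\in R$ and $f_i\in S$, and compute the critical equations explicitly. I would then determine which pairs of invariant subspaces remain genuine isolated critical points under the precise GD definition, and verify the count (and reality) directly in these coordinates.
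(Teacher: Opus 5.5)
Your decoupling into $\mathrm{trace}(P_{22}Q_{22})+\mathrm{trace}(P_{33}Q_{33})$ matches the paper (Lemmas~\ref{lem:ED-critpoints-of-products-of-Grassmannians} and~\ref{lem:eigenspaces}), as do the ED count, the reality argument and the spectral description of the blocks. The gap is in the GD part. You never derive which invariant subspaces are discarded; you read it off from the formula. The criterion you propose, isolatedness, is not the one in Definition~\ref{def:GD} and gives the wrong count. That definition discards exactly the points of $\mathrm{cut}(Q)$. For example, take $n=6$, $k=2$, $r=3$, $s=2$, $j=\ell=1$. The block $Q_{22}$ has a simple eigenvalue $0$, so it has three isolated invariant lines, and your count would be $3\cdot 2=6$. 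The GD degree is $4$.

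The missing argument (Lemma~\ref{lem:cut-locus-for-products}) uses $Q^2=Q$. For $\hat u=(0,u,0)$ one has $\|Q\hat u\|^2=u^\top Q_{22}u$. So a kernel vector of $Q_{22}$ in $\mathrm{im}(P_{22})$ gives a nonzero vector of $\ker(Q)\cap\mathrm{im}(P)$. Similarly, if $u\in F\cap\ker(P_{22})$, then $\|(\mathrm{Id}_n-Q)\hat u\|^2=0$, so $\hat u\in\ker(P)\cap\mathrm{im}(Q)$. Since $P$ and $Q$ have equal rank, this again means $P\in\mathrm{cut}(Q)$.

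Even granting the exclusions, your counts for $r\le n-k$ and $r>n-k$ are only upper bounds. You must also show that each remaining configuration avoids the cut locus. That is, for generic $Q$ the vectors $Q\hat f_1,\dots,Q\hat f_\gamma$ with $\gamma=(k+r-n)_+$, the $Q\hat u_i$ for $i\in I$, and the $Q\hat v_h$ for $h\in H$ must be linearly independent. For $n>r+s$ the paper proves this by writing $Q=ZZ^\top$ with $Z^\top Z=\mathrm{Id}_k$ and exhibiting one explicit $Z$ for which the relevant determinant is nonzero (Lemma~\ref{lem:n>r+s}).

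For $n=r+s$, which you leave open, this independence genuinely fails, and that failure is the whole content of the first case. Here $Z_2^\top Z_2+Z_3^\top Z_3=\mathrm{Id}_k$. So an eigenvector $z_i$ of $Z_2^\top Z_2$ with eigenvalue $\lambda_i$ is also an eigenvector of $Z_3^\top Z_3$ with eigenvalue $1-\lambda_i$. The associated eigenvectors $u_i$ of $Q_{22}$ and $v_i$ of $Q_{33}$ then satisfy $Q\hat u_i=\sqrt{\lambda_i}\,Zz_i$ and $Q\hat v_i=\sqrt{1-\lambda_i}\,Zz_i$, which are parallel. Hence choosing both members of a CS pair puts $P$ in the cut locus. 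If instead $I\cap H=\emptyset$, the chosen vectors form an eigenbasis of $Z_2^\top Z_2$ and are therefore independent (Lemma~\ref{lem:n=r+s}). Since $|I|+|H|=\min\{k,s\}$, disjointness forces $I$ to be the complement of $H$, which gives $\binom{\min\{s,k\}}{\ell}$.

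Computing the critical equations in CS coordinates cannot detect this on its own, because every pair of invariant subspaces is critical. The distinction lies entirely in membership in $\mathrm{cut}(Q)$.
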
   
Matroid varieties are a combinatorially rich class of Grassmann subvarieties.
Given a realizable matroid, the realization space is the set of points in a Grassmannian realizing that matroid. 
The corresponding matroid variety is the Zariski closure of this set.
Up to isomorphism, a rank 2 matroid is uniquely determined by its number $\ell \leq n$ of loops (dependent elements)  and the sizes $\pi = (\pi_1, \ldots, \pi_r) \vdash n-\ell$ of its parallel classes (maximal sets such that every $2$-subset is dependent). 
\begin{theorem} \label{thm:ed/gd-rank 2-positroids} 
Let $M_{\pi,\ell}$ be a matroid of rank $2$ with $\ell$ loops whose parallel classes have sizes $\pi = (\pi_1, \ldots, \pi_r) \vdash n-\ell$, and 
let $\mathcal M_{\pi, \ell} \subset \mathrm{pGr}(2,n)$ be the corresponding matroid variety.  
\begin{enumerate}
    \item \emph{(Corollary~\ref{cor:matroidED})} If $r =2$, the ED degree of $\mathcal M_{\pi,\ell}$ is $\pi_1 \pi_2$. The GD degree of $\mathcal M_{\pi, 0}$ is $2$ if $2 \leq \pi_1 \leq n-2$ and $1$ otherwise. If $\ell>0$, the GD degree of $\mathcal M_{\pi,\ell}$  is $2^{\#\{i \in [2] \, : \, |\pi_i|>1\}}$.
    \item \emph{(Corollary~\ref{cor:GD-matroid})} If $r\geq 3$, the GD degree of $\mathcal{M}_{\pi, \ell}$ is equal to $2^{\#\{i \in [r]\, : \, |\pi_i| > 1\}}.$
\end{enumerate}
Furthermore, if the data point is real, all critical points are real.
\end{theorem}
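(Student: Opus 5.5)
The approach is to make the matroid variety explicit and then, for $r=2$, identify it with a product of Grassmannians so that Theorem~\ref{thm:prod_Grassmannian} applies. Write a realization of $M_{\pi,\ell}$ as a $2\times n$ matrix $A$. Its loop columns are zero, and the columns indexed by the $i$-th parallel class $C_i$ (with $|C_i|=\pi_i$) are $t_j a_i$ for $j\in C_i$, where $a_i\in\CC^2$ and the $a_i$ are pairwise independent. The row space $W\subset\CC^n$ is then the image of $x\mapsto (t^{(i)}\,(a_i\cdot x))_i$, where $t^{(i)}=(t_j)_{j\in C_i}$. Hence $W\subseteq \ell_1\oplus\cdots\oplus\ell_r$, where $\ell_i=\mathrm{span}(t^{(i)})\subseteq \CC^{C_i}$ is a line in the coordinates of the class $C_i$. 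So $\mathcal M_{\pi,\ell}$ is the closure of the set of $2$-planes contained in a sum of lines, one line in each $\CC^{C_i}$, and in general position in that sum.

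\textbf{The case $r=2$.} After the $\mathrm{GL}_2$ change of basis $a_1\mapsto e_1$, $a_2\mapsto e_2$, we have exactly $W=\ell_1\oplus\ell_2$. Thus $\mathcal M_{\pi,\ell}=\mathrm{pGr}(1,\pi_1)\times\mathrm{pGr}(1,\pi_2)\subseteq\mathrm{pGr}(2,n)$, embedded in complementary coordinate blocks, with $\ell$ unused coordinates. Order the classes so that $\pi_1\ge\pi_2$ and apply Theorem~\ref{thm:prod_Grassmannian} with $k=2$, $j=1$, $r\to\pi_1$, $s\to\pi_2$.
\begin{itemize}
\item The ED degree is $\pi_1\pi_2$.
\item If $\ell=0$ then $n=\pi_1+\pi_2$, so the GD degree is $\min\{\pi_2,2\}$. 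This equals $2$ exactly when $2\le\pi_2\le\pi_1$, i.e.\ when $2\le \pi_1\le n-2$, and equals $1$ otherwise.
\item If $\ell>0$ then $\pi_1\le n-\ell-1\le n-2$, so the middle case applies and gives $\min\{\pi_1,2\}\min\{\pi_2,2\}=2^{\#\{i:\pi_i>1\}}$.
\end{itemize}
Reality of the critical points is inherited from Theorem~\ref{thm:prod_Grassmannian}.

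\textbf{The case $r\ge3$.} Take generic $Q\in\mathrm{pGr}(2,n)$, the projection onto a $2$-plane $V$. I will parametrize the model by the lines $\ell_i\in\PP^{\pi_i-1}$ together with a $2$-plane $W$ in the $r$-dimensional space $L=\oplus\ell_i$, and split criticality into two parts.
\begin{enumerate}
\item Varying $W$ inside a fixed $L$ is the multieigenvector problem for the compression $Q_L$ of $Q$ to $L$.
\item Varying a single $\ell_i$ with $W$ otherwise fixed is a variation within $\CC^{C_i}$.
\end{enumerate}
The key computation is that criticality in $\ell_i$ forces $\ell_i$ to lie in $V_i$, the coordinate projection of $V$ onto $\CC^{C_i}$, which is $2$-dimensional for generic $V$ when $\pi_i\ge2$. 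Moreover, $\ell_i$ must be one of two eigenlines of a $2\times2$ symmetric pencil on $V_i$, built from the block $Q_{C_iC_i}$ and the weight of the class in $W$. When $\pi_i=1$ there is no choice at all. Once the $\ell_i$ are fixed, $Q_L$ has rank $2$, and the critical $W$ that lies in the model's open part (all coordinates of $L$ nonzero on $W$) should be the unique plane equal to the range of $Q_L$. Multiplying the choices over the classes gives $2^{\#\{i:\pi_i>1\}}$. Realness follows because each choice is an eigenline of a real symmetric $2\times2$ problem.

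\textbf{Main obstacle.} I expect the hardest step to be the $W$-step and the boundary. Since $Q_L$ has a zero eigenvalue of multiplicity $r-2$, the multieigenvector problem in step~1 has positive-dimensional families of critical planes, namely planes spanned by one eigenvector with nonzero eigenvalue and one kernel vector. I must show these either fail criticality in the $\ell_i$-directions or lie outside the realization space, and hence do not count. I would argue via the tangent space to $\mathcal M_{\pi,\ell}$ at a smooth point, where mixing directions between classes are available when $r\ge3$. In parallel I would invoke Theorem~\ref{thm:gd-finite}, which guarantees finiteness, so that only the dimension-zero critical points need to be counted. Finally, I would check on one explicit generic $V$ that all $2^{\#}$ candidates are genuinely distinct critical points, to confirm the count is sharp.
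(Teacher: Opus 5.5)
Your $r=2$ argument is correct and is the paper's: $\mathcal M_{\pi,\ell}=\mathrm{pGr}(1,\pi_1)\times\mathrm{pGr}(1,\pi_2)$, followed by the case analysis of Theorem~\ref{thm:prod_Grassmannian} with $k=2$, $j=1$. For $r\ge 3$ your parametrization is also the paper's: the lines $\ell_i$ are the rows $u^i$ of $U$, and the plane $W\subseteq L$ is $P'$. However, the step you call the main obstacle is planned in a way that cannot succeed. In general, the planes $W$ meeting $\ker(Q_L)$ are neither non-critical in the $\ell_i$-directions nor outside the realization space. For example, take $\pi=(2,1,1)$ and $n=4$. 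The planes $\mathrm{span}(a,b)$ with $a\in V$, $b\in V^\perp$ and $x_{12}=0$ form a curve. Each such plane is $Q$-invariant, hence critical for all of $\mathrm{pGr}(2,4)$ and a fortiori for $\mathcal M_{(2,1,1)}$, and a generic point of this curve is a smooth point of the realization space. What removes these points is the definition of the GD degree, which only counts points of $\mathcal M_{\mathrm{reg}}\setminus\mathrm{cut}(Q)$. The ingredient you are missing is Lemma~\ref{lem:unique-cp}: for fixed $U$, $\mathrm{rank}(UQU^\top P')=2$ if and only if $\mathrm{rank}(QU^\top P'U)=2$. Hence $W\cap\ker(Q_L)\neq 0$ exactly when $U^\top P'U\in\mathrm{cut}(Q)$, and the range of $Q_L$ is the unique non-cut critical $W$. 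Appealing to Theorem~\ref{thm:gd-finite} does not replace this. Finiteness does not tell you which isolated points survive, nor that your $2^{\#}$ candidates avoid $\mathrm{cut}(Q)$.

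The second gap is the decoupling that lets you multiply choices over the classes. You describe the $\ell_i$-condition as an eigenproblem for a pencil involving the weight of class $i$ in $W$. Since $W=\mathrm{range}(Q_L)$ depends on all the $\ell_j$, such conditions would be coupled, and nothing in your outline yields two independent choices per class. With $P'$ frozen, the condition is not even an eigenproblem. Writing $Q=ZZ^\top$ with $Z_i$ the rows of $Z$ in class $i$, it reads $Z_i\sum_j P'_{ij}Z_j^\top(u^j)^\top=\lambda(u^i)^\top$, which is affine in $u^i$. The paper obtains the decoupling from the envelope Lemma~\ref{lem:envelope}. Substituting the unique non-cut $P^*(U)$ gives $\mathrm{trace}(UQU^\top P^*(U))=\mathrm{trace}(UQU^\top)=\sum_i u^iQ_{ii}(u^i)^\top$, which is separable. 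So each $u^i$ is independently an eigenvector of $Q_{ii}$, and eigenvectors with eigenvalue $0$ (present when $\pi_i\ge 3$) give singular points by Proposition~\ref{prop:singular-rank-2}. Equivalently, let $M=UZ$ and let $P'$ be the projection onto $\mathrm{im}(M)$. The identity $M^\top P'=M^\top$ collapses your condition to $Q_{ii}(u^i)^\top=\lambda(u^i)^\top$, independent of $W$. With these two ingredients your outline becomes the paper's proof.
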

\noindent Matroids of rank $k \geq 3$ are more complicated and their matroid varieties are not necessarily irreducible by Mn\"ev's Universality Theorem; see, e.g., \cite{CL}. 
However, in $\mathrm{pGr}(3,6)$, there are $38$ matroid varieties up to isomorphism, all of which are irreducible \cite{dukes}.  
We present a computational study of their ED and GD degrees in Table~\ref{tab:36-matroids}.

Finally, we will turn to Schubert varieties and extend the results of \cite[Section 6]{FRS} to Schubert varieties in ${\rm pGr}(k,n)$. 
The Schubert variety $\mathcal S_{i_1\cdots i_k}$ is the Zariski closure of the points in $\mathrm{pGr}(k,n)$ whose reduced row echelon forms have pivot columns indexed by $i_1, \ldots, i_k$. 
We say these pivot columns are cyclically adjacent if $i_{j+1} = (i_j\mod n) + 1$ for $j \in [k-1]$.
\begin{restatable}{theorem}{gdschubert} \label{thm:gd-schubert} The Schubert varieties $\mathcal S_{i_1\cdots i_k}$
in $\mathrm{pGr}(k,n)$ with cyclically adjacent pivot columns have GD degree one.
Moreover, this unique critical point is a rational function of the data point.
\end{restatable}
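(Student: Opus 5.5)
The plan is to identify the Schubert varieties with cyclically adjacent pivots with varieties of the form
\[
\mathcal S=\{V\in \mathrm{Gr}(k,n) \,:\, U\subseteq V\subseteq W\}
\]
for coordinate subspaces $U\subseteq W$. The unique critical point should then be the orthogonal projection of the data subspace into the relevant piece. First I read off $U$ and $W$ from the reduced row echelon forms. If the pivots $i,i+1,\dots,i+k-1$ do not wrap around, the rows span a subspace of the coordinate subspace $W$ cut out by the allowed entries, and $U=0$. If the pivots wrap around, say $\{n-a+1,\dots,n\}\cup\{1,\dots,k-a\}$, the rows with pivots at the end are forced to be standard basis vectors. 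This gives $U=\mathrm{span}(e_{n-a+1},\dots,e_n)$, and the remaining rows range over a sub-Grassmannian of $U^\perp$. In either case $\mathcal S\cong \mathrm{Gr}(k-u,\,W\cap U^\perp)$ with $u=\dim U$. In projection coordinates this reads $P=P_U+P'$, where $P'$ is a rank $k-u$ projection with image in $W':=W\cap U^\perp$.

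Next I rewrite the objective. Since $\mathrm{trace}(P_UQ)$ is constant on $\mathcal S$, maximizing $\mathrm{trace}(PQ)$ over $\mathcal S$ is the same as maximizing $\mathrm{trace}(P'Q')$ over $\mathrm{pGr}(k-u,W')$. Here $Q'=\Pi_{W'}Q\,\Pi_{W'}$ is the compression of the data to $W'$. By the multieigenvector description recalled before Theorem~\ref{thm:disc1}, applied inside $W'$, the critical points are exactly the $Q'$-invariant $(k-u)$-dimensional subspaces of $W'$.

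Now let $Q$ be the projection onto a generic $k$-plane $L$. Then $Q'$ has rank $r=\min(k,\dim W')$. Its nonzero eigenvalues are generically distinct and its kernel has dimension $\dim W'-r$. So the $Q'$-invariant subspaces of dimension $k-u$ consist of a choice of nonzero eigenlines together with an arbitrary subspace of the kernel, and this is generally a positive-dimensional family.

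The key step, and the one I expect to be the main obstacle, is to invoke Definition~\ref{def:GD} precisely. I expect the GD degree to count only critical points at which the Grassmann distance is defined and smooth, that is, where no principal angle between $V$ and $L$ equals $\pi/2$ (equivalently $\det(Y^TX)\ne 0$ for orthonormal bases). Any critical point containing a kernel direction of $Q'$ has a vector orthogonal to $L$ and is excluded. For cyclically adjacent pivots, the dimension count should force $k-u=r$: in the wrapped case $\dim W'=n-u\ge k>k-u$, and the compression of $L$ onto $U^\perp$ generically has rank exactly $k-u$ after accounting for $L\cap$ the relevant coordinate space. I will check this count carefully case by case. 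Granting it, the only admissible critical point is $V=U\oplus \mathrm{im}(Q')$, where $\mathrm{im}(Q')=\Pi_{W'}(L)$ is the orthogonal projection of the data subspace into $W'$ (the column space of $Q'$).

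Finally, rationality follows directly. The projection onto $U\oplus\Pi_{W'}(L)$ can be written as $P_U+Q'(Q')^{+}$, and on the generic locus $(Q')^+$ can be expressed through the adjugate of $Q'$ restricted to its image, or via $B(B^TB)^{-1}B^T$ with $B=\Pi_{W'}Y$ for a fixed basis matrix $Y$ of $L$. Entries of $Q$ determine $Y Y^T$ rationally, and this expression is invariant under change of basis. So the unique critical point is a rational function of the data point. If the definition of GD degree turns out not to exclude orthogonal directions, I would instead try to show directly that the kernel part of $Q'$ is trivial for these pivot patterns.
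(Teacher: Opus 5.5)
Your non-wrapping case $\mathcal S_{i+1,\ldots,i+k}$ is fine and is exactly the paper's route (Corollary~\ref{cor:small-grass}). The wrapping case fails at the dimension count you flagged. For $u=\dim U\ge 1$ and $W'=U^\perp$, the compression $Q'$ generically has rank $\min(k,n-u)>k-u$. Your own inequality $\dim W'\ge k>k-u$ already shows this, so $U\oplus\mathrm{im}(Q')$ has dimension $u+\min(k,n-u)>k$ and is not a point of $\mathrm{pGr}(k,n)$. The nonzero eigenvalues of $Q'$ are also not distinct. By Lemma~\ref{lemma: eigenvalue1}, $Q'$ has eigenvalue $1$ with eigenspace $F=L\cap U^\perp$ of dimension exactly $k-u$, plus further eigenvalues in $(0,1)$. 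So discarding only invariant subspaces that meet $\ker Q'$ does not give uniqueness. For $\mathcal S_{14}\subset\mathrm{pGr}(2,4)$ one has $U=\langle e_4\rangle$, and $Q'$ has eigenvalues $1,\lambda,0$ with eigenvectors $f,v,w$. Your criterion removes only $\langle w,e_4\rangle$ and keeps both $\langle f,e_4\rangle$ and $\langle v,e_4\rangle$, which would give GD degree $2$. When $u\ge 1$ and $k-u\ge 2$, a positive-dimensional family survives.

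The missing idea is that the cut-locus condition compares $P=P_U+P'$ with $Q$, not $P'$ with $Q'$. It detects vectors of $V\cap L^\perp$ with nonzero $U$-component, and these are invisible after compressing to $W'$. The paper's argument (Lemma~\ref{lem:cut-locus-for-products}, used in Theorem~\ref{thm:zero}) runs as follows. If $V'=\mathrm{im}(P')\not\supseteq F$, pick a nonzero $f\in F$ orthogonal to $V'\cap F$. The eigenspaces of the real symmetric $Q'$ are mutually orthogonal and $f\perp U$, so $f\in V^\perp\cap L=\ker P\cap\mathrm{im}\,Q$. For two $k$-planes this is equivalent to $\mathrm{im}\,P\cap\ker Q\neq 0$, i.e.\ $P\in\mathrm{cut}(Q)$. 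In the example, $f\perp v$ and $f\perp e_4$, so $\langle v,e_4\rangle$ is excluded. Since $\dim V'=k-u=\dim F$, this forces $V'=F$. The unique GD critical point is therefore $U\oplus(L\cap U^\perp)$, the paper's ``set $A_2=0$'', not $U\oplus\Pi_{U^\perp}(L)$; you still need to check that it avoids the cut locus. Your rationality formula $P_U+Q'(Q')^{+}$ must be replaced accordingly. For example, use the Pl\"ucker description: set to zero every $y_J$ with $J\not\supseteq\{n-u+1,\ldots,n\}$.
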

In particular, given a data point in Pl\"ucker coordinates, the unique critical point is found by setting certain Pl\"ucker coordinates to zero; see Section~\ref{sec:schubert} for details.

This article is organized as follows. 
In Section~\ref{sec:EDopt}, we discuss in detail the optimization problem \eqref{eq:trace-opt} for subvarieties of the Grassmannian and explain how to derive equations to solve this problem using different coordinates. We define ED-general subvarieties, namely those subvarieties whose ED critical points can be computed via saturating with respect to their singular locus only. We show that if a subvariety has dimension that is too big, it cannot be ED-general (Theorem \ref{thm:codim-sing-locus}).

In Section~\ref{sec:ed-discriminant}, we consider the optimization problem \eqref{eq:trace-opt} when $\mathcal M = \mathrm{pGr}(k,n)$. We count the critical points for any complex symmetric matrix $Q$ (Theorem~\ref{thm:defective}) and
describe the ED discriminant of the projection Grassmannian (Theorem ~\ref{thm:disc}). 
In Section \ref{sec:gd-degree}, we 
define the Grassmann distance (GD) degree of a subvariety of $\mathrm{pGr}(k,n)$, namely, the number of complex critical points of \eqref{eq:trace-opt} when the data matrix is a generic point within the Grassmannian. It is still unknown whether the GD degree is always a finite number. We prove that it is indeed finite for every model in $\mathrm{pGr}(2,n)$ (Theorem \ref{thm:gd-finite}).  
Finally, we show that the ED and GD degrees of $\mathcal M$ are expected to agree if and only if $\mathcal M$ has low dimension (Theorem~\ref{thm:EDGD}).

Section~\ref{sec:products} is devoted to computing the ED and GD degrees of products of smaller Grassmannians inside $\mathrm{pGr}(k,n)$, leading to Theorem \ref{thm:prod_Grassmannian}. As we will see, certain matroid and Schubert varieties are products of Grassmannians. 
In Section~\ref{sec:positroids}, we consider matroid varieties for matroids of rank $2$. 
We describe their degrees (Proposition~\ref{prop:matroid-degree}) and their singular loci (Proposition~\ref{prop:singular-rank-2}). 
We prove that their ED degrees satisfy a monotonicity property (Theorem \ref{thm:monotonicity}), and that their GD degrees are always powers of two (Theorem~\ref{thm:ed/gd-rank 2-positroids}). 
We conclude by listing the rank 3 matroid varieties on 6 elements up to matroid isomorphism, and giving their ED and GD degrees (Table~\ref{tab:36-matroids}). 
In Section~\ref{sec:schubert}, we show that all Schubert varieties in $\mathrm{pGr}(2,n)$ have GD degree $1$ or $2$ (Theorem~\ref{thm:k2-schub}), thereby resolving \cite[Conjecture 6.7]{FRS}. Finally, we prove Theorem~\ref{thm:gd-schubert} showing that all Schubert varieties defined by cyclically adjacent pivot columns have GD degree one. 
The code accompanying this paper is available~at: 
\begin{center}
\url{https://zenodo.org/records/22917921}
\end{center}

\section{Euclidean Distance Optimization}\label{sec:EDopt}
In this section, we treat the general Euclidean distance minimization problem \eqref{eq:trace-opt}
for subvarieties of the Grassmannian. In particular, we will extend a result about a form of genericity of $\mathcal{M}$ (ED-generality \cite{FRS}) from the setting of $\mathrm{Gr}(2,n)$ to $\mathrm{Gr}(k,n)$. We first briefly go over how one can compute the critical points of this optimization problem. 

For some computations, it is beneficial to work with Pl\"ucker coordinates or with a parametrization of $\mathcal M$ in the form of a $k \times n$ matrix, rather than the projection matrices \eqref{eq:pGr}.
We write $\mathrm{Gr}(k,n) \subset \mathbb P^{\binom n k - 1}$ for the Grassmannian in its Pl\"ucker embedding. 
Each point in a Zariski dense subset of $\mathrm{Gr}(k,n)$ can be identified with a point of $\mathrm{pGr}(k,n)$ as follows: let $X$ be the \emph{cocircuit matrix} (see, e.g., \cite{DFRS}) of a point in $\mathrm{Gr}(k,n)$.  
This is an $n \times \binom{n}{k-1}$ matrix whose $(i,J)$-th entry is the signed Pl\"ucker coordinate $x_{iJ}$
where $i \in [n]$ and $J \in \binom{[n]}{k-1}$.  By \cite[Corollary 2.5]{DFRS}, the corresponding $n \times n$ projection matrix $P$ is computed as 
\begin{equation*} 
    P = \frac{k}{{\rm trace}(XX^\top)}\, XX^\top.
\end{equation*}
This relation allows us to formulate the optimization problem \eqref{eq:trace-opt} using Pl\"ucker coordinates:
\begin{equation}\label{eq:trace-opt-over-Gr(k,n)}
    \textrm{maximize} \quad \phi_Q(X) \, := \, \,k \,\ \frac{{\rm trace}(XX^\top Q)}{{\rm trace}(XX^\top)} \quad \textrm{subject to} \quad X \in \mathcal{M}(\RR) \subset {\rm Gr}(k,n).
\end{equation}
Here, we view $\mathcal{M}$ as a subvariety of $\mathrm{Gr}(k,n) \subseteq \mathbb P^{\binom n k -1}$ via the entries of the cocircuit matrix.

Let $I_{\mathcal{M}}$ be the defining ideal of $\mathcal{M} \subset {\rm Gr}(k,n)$ minimally generated by $m$ polynomials. We assume that the codimension of $\mathcal{M}$ is $c$. Let also $\mathcal J_{\mathcal{M}}$ denote the $m \times \binom{n}{k}$ Jacobian matrix of $\mathcal{M}$. The augmented Jacobian matrix $\mathcal J^Q_{\mathcal{M}}$ is the $(m+2)\times \binom{n}{k}$ matrix 
\begin{equation}\label{eq:augJac}
    \mathcal J^Q_{\mathcal{M}}(X) = \left [ \begin{array}{c}
         \frac{\partial {\rm trace}(XX^\top Q)}{\partial x_I} \\ 
         x_I \\
         \mathcal J_{\mathcal{M}}
    \end{array}  
    \right ].
\end{equation}
Up to scaling, the first row in \eqref{eq:augJac} is the gradient of the numerator of $\phi_Q(X)$, and the second row is the gradient of the denominator. Then the critical ideal of \eqref{eq:trace-opt-over-Gr(k,n)} is 
\begin{equation}\label{eq:critideal}
    I^Q_{\mathcal{M}} \,\, = \,\,\biggl(I_{\mathcal{M}} \, \, + \,\,
\bigl\langle \,
(c\!+\!2) \times (c\!+\!2)\,\, \hbox{minors of}\,\,\, \mathcal{J}^Q_{\mathcal{M}}(X)\, \bigr\rangle \biggr) : \biggl(I_{\mathrm{sing}}(\mathcal{M}) \bigl \langle {\rm trace}(XX^\top)\bigr\rangle\biggr)^{\infty} 
\end{equation}
where $I_{\mathrm{sing}}(\mathcal{M})$ is the ideal of the singular locus of $\mathcal{M}$.
It is well established that, for generic $Q$, the critical ideal \eqref{eq:critideal} is zero-dimensional \cite{DHOST,FRS} with a constant number of complex zeros. 
The degree of $I^Q_{\mathcal M}$ is the ED degree of $\mathcal{M}$.  

Typically, the subvariety $\mathcal{M}$ will be parametrized by polynomials in $\RR[t_1,\ldots, t_s]$. In that case, the critical ideal is defined up to saturation by the $2\times 2$ minors of the matrix
\begin{align} \label{eq:crit-matrix}
\begin{bmatrix}
  \mathrm{trace}(X(t)X(t)^\top Q) & \frac{\partial \mathrm{trace}(X(t)X(t)^\top Q)}{\partial t_1} &  \frac{\partial \mathrm{trace}(X(t)X(t)^\top Q)}{\partial t_2} & 
  \cdots & \frac{\partial \mathrm{trace}(X(t)X(t)^\top Q)}{\partial t_s} \\
  \mathrm{trace}(X(t)X(t)^\top) & \frac{\partial \mathrm{trace}(X(t)X(t)^\top)}{\partial t_1} &  \frac{\partial \mathrm{trace}(X(t)X(t)^\top)}{\partial t_2} & 
  \cdots & \frac{\partial \mathrm{trace}(X(t)X(t)^\top)}{\partial t_s}
\end{bmatrix};
\end{align}
see  \cite[Section 3]{FRS}.
Here $X(t)$ denotes the cocircuit matrix written as a function of $t$.
\begin{example}
As a warm-up, we consider rational normal scrolls. For integers $2 \leq n_1 \leq n_2 \leq \cdots \leq n_d$, the rational normal scroll $S(n_1,\ldots, n_d)$ is a toric variety in $\PP^{n_1 + \cdots + n_d+d-1}$  that
 is ruled by the $(d-1)$-dimensional planes spanned by the rows of the matrix
$$ A(t) = \left[\begin{array}{ccccccccccccc} 1 & t & \cdots & t^{n_1} & 0 & 0 & \cdots & 0 & \cdots & 0 & 0 & \cdots & 0 \\
                          0 & 0 & \cdots & 0 & 1 & t & \cdots & t^{n_2} & \cdots & 0 & 0 & \cdots & 0 \\
\vdots & \vdots & \vdots & \vdots & \vdots & \vdots & \vdots & \vdots & \vdots  & \vdots  & \vdots & \vdots & \vdots\\
                          0 & 0 & \cdots & 0 & 0 & 0 & \cdots & 0 & \cdots & 1 & t & \cdots & t^{n_d}
\end{array} \right].$$
We consider the curve $\mathcal S$ in ${\rm Gr}(d,N+d)$ parametrized by $A(t)$, that is, the collection of the ruling subspaces. Here $N=n_1+\dots+n_d$. Let $\varphi$ denote Euler's totient function.
\end{example}
\begin{theorem}\label{thm:rat-norm}
    The curve $\mathcal S$ of $(d-1)$-planes ruling the rational normal scroll $S(n_1,\dots,n_d)$ has  ED degree $2\delta-2$ where 
    \begin{align*}
        \delta = \sum_{\substack{e >2 \\ \exists i: \, e |2(n_i+1) }} \varphi(e).
    \end{align*}
     In particular, if $\mathrm{gcd}(n_i+1,n_j+1) = 1$ for all $1 \leq i < j \leq d$, then this ED degree is $4N - 2$. 
\end{theorem}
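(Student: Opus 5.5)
We work directly with the projection matrix rather than with the Plücker formulation \eqref{eq:trace-opt-over-Gr(k,n)}. Write $a_i(t)=(1,t,\dots,t^{n_i})$ for the nonzero part of the $i$-th row of $A(t)$. The rows of $A(t)$ have disjoint supports, so $A A^\top$ is diagonal with entries
$$g_i(t)=\sum_{j=0}^{n_i}t^{2j}=\frac{t^{2(n_i+1)}-1}{t^2-1}.$$
Hence $P(t)=A^\top(AA^\top)^{-1}A$, and
$$\mathrm{trace}(P(t)Q)=\sum_{i=1}^d \frac{h_i(t)}{g_i(t)},\qquad h_i(t)=a_i(t)^\top Q_{ii}\,a_i(t),$$
where $Q_{ii}$ is the $i$-th diagonal block of $Q$. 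Thus $h_i$ is a generic polynomial of degree $2n_i$. The critical points on the chart $t\in\CC$ are the zeros of the derivative $\sum_i (h_i'g_i-h_ig_i')/g_i^2$, and we must exclude the points where the parametrization degenerates, namely where some $g_i(t)=0$. This is exactly the saturation by $\mathrm{trace}(XX^\top)$ in \eqref{eq:critideal}, since the Plücker coordinates are monomials and $\mathrm{trace}(XX^\top)$ is proportional to $\prod_i g_i$.

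\textbf{Clearing denominators.} The roots of $g_i$ are the $2(n_i+1)$-th roots of unity other than $\pm1$, so $g_i=\prod_{e>2,\ e\mid 2(n_i+1)}\Phi_e$, and these roots are simple. Set
$$G=\mathrm{lcm}(g_1,\dots,g_d)=\prod_{e>2,\ \exists i:\, e\mid 2(n_i+1)}\Phi_e,\qquad \deg G=\delta.$$
The critical equation becomes $N(t)=0$, where
$$N=\sum_i (h_i'g_i-h_ig_i')\,\frac{G^2}{g_i^2}.$$
Since $h_i$ and $g_i$ both have degree $2n_i$, the top terms of $h_i'g_i-h_ig_i'$ cancel, so this Wronskian has degree at most $4n_i-2$. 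Consequently every summand of $N$ has degree at most $(4n_i-2)+2\delta-4n_i=2\delta-2$. The claim therefore reduces to three genericity statements for $Q$:
\begin{enumerate}
\item $\deg N=2\delta-2$. This also shows that the missing point $t=\infty$ of the curve is not critical; one can check it directly in the chart $s=1/t$.
\item $N$ has no common root with $G$.
\item $N$ is squarefree.
\end{enumerate}

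\textbf{Checking the three conditions.} For (1), the coefficient of $t^{2\delta-2}$ is a linear form in the entries of the $Q_{ii}$. I would show it is not identically zero by isolating one index $i$ and varying $Q_{ii}$ alone; within that summand the subleading Wronskian coefficient is $(\text{lead }h_i)(\text{sublead }g_i)\cdot(\text{const})-\dots$, which is visibly nonzero for suitable $h_i$.

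For (2), fix a root $\zeta$ of some $\Phi_e$ dividing $G$. The summands with $g_i(\zeta)\neq0$ contain $G^2/g_i^2$, which vanishes at $\zeta$. For the indices $i$ with $\Phi_e\mid g_i$, the summand evaluates to $-h_i(\zeta)g_i'(\zeta)\,(G^2/g_i^2)(\zeta)$, and here $g_i'(\zeta)\neq0$ because the roots of $g_i$ are simple. This expression is a nonzero linear form in $Q_{ii}$ that vanishes only when $h_i(\zeta)=0$. Since the blocks $Q_{ii}$ are independent, the sum is generically nonzero.

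For (3), $N$ depends linearly on $Q$, so its discriminant is a polynomial in $Q$. It suffices to exhibit one $Q$ with $N$ squarefree of full degree, or to argue via Bertini that the linear system spanned by the $N$'s is base-point free away from the roots of $G$.

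\textbf{The coprime case.} If $\gcd(n_i+1,n_j+1)=1$ for all $i<j$, then no $e>2$ divides both $2(n_i+1)$ and $2(n_j+1)$. Hence
$$\delta=\sum_i\bigl(2(n_i+1)-\varphi(1)-\varphi(2)\bigr)=\sum_i 2n_i=2N,$$
so the ED degree is $2\delta-2=4N-2$.

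I expect the main obstacle to be the genericity claims, specifically squarefreeness (3) and the nonvanishing of the leading coefficient (1). Condition (2) is essentially automatic from the independence of the blocks $Q_{ii}$.
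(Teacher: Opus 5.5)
Your proposal is correct and follows the paper's proof essentially step for step. Your $N$ is exactly the paper's numerator $H=G'L-GL'$, where your $G$ plays the role of the paper's $L=\mathrm{lcm}(s_1,\dots,s_d)$. The remaining steps all match the paper: the cancellation of the top coefficient, the nonvanishing at the roots of the lcm (using that the cyclotomic roots are simple), the check at $t=\infty$, and the cyclotomic count of $\delta$.

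Your condition (3) is a point the paper leaves implicit, and it closes quickly. The linear system $\{N_Q\}$ on $\PP^1$ is base-point free: at $\infty$ and at the roots of $G$ this is your (1) and (2), and at any other $t_0$ we have $N_Q(t_0)=G(t_0)^2\,\mathrm{trace}(P'(t_0)Q)$ with $P'(t_0)\neq 0$. Bertini then gives that $N_Q$ is squarefree for generic $Q$.
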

\begin{proof}
    Let $v_i(t) = (1,t,\dots,t^{n_i})$ and $s_i(t) = 1+t^2+\dots+t^{2n_i}$. 
    By scaling the $i$-th row of $A(t)$ by $1/\sqrt{s_i(t)}$ we obtain $\bar{A}(t)$. The rows of $\bar{A}(t)$ form an orthonormal basis of the subspace they generate. 
    Therefore, the projection matrix onto this subspace is $P(t) = \bar{A}(t)^\top \bar{A}(t)$. This projection matrix is block diagonal, with $i$-th block of size $n_i+1$ equal to $\frac{v_i(t)^\top v_i(t)}{s_i(t)}$. Let $Q_1, \ldots, Q_d$ be the corresponding diagonal blocks of $Q$. Then the objective function in \eqref{eq:trace-opt} is
    \begin{align*}
        {\rm trace}(P(t)Q) = \sum_{i=1}^d \frac{f_i(t)}{s_i(t)}
    \end{align*}
     where $f_i(t):=v_i(t)Q_iv_i(t)^\top$. By genericity of $Q$, each $f_i$ is a generic polynomial of degree $2n_i$, and there are no relations between any $f_i$ and $f_j$ for $i\neq j$. Define $L(t):={\rm lcm}(s_1(t),\dots,s_d(t))$ and $\delta := {\rm deg}(L)$. Taking a common denominator we have
    \begin{align*}
        {\rm trace}(P(t)Q) = \frac{G(t)}{L(t)}, \quad G(t):=\sum_{i=1}^d f_i(t)\frac{L(t)}{s_i(t)}\,.
    \end{align*}
    Since ${\rm deg}(f_i)={\rm deg}(s_i)$ 
 we have ${\rm deg}(G)={\rm deg}(L)=\delta$. Now, for any root $\alpha$ of $L$ let $I_{\alpha}:=\{i \, : \, s_i(\alpha)=0\}$. Then $G(\alpha) = \sum_{i\in I_{\alpha}} f_i(\alpha)(\frac{L}{s_i})(\alpha)\neq 0$ generically, since $(\frac{L}{s_i})(\alpha)\neq 0$ for $i \in I_{\alpha}$. Hence $G$ and $L$ have no common factors. 

The numerator of the derivative of ${\rm trace}(P(t)Q)$ is $H(t) = G'(t)L(t) - G(t)L'(t)$, which has degree $2\delta-2$ since the leading term vanishes. If $L(\alpha)=0$, then $H(\alpha) = -G(\alpha)L'(\alpha) \neq 0$ since ${\rm gcd}(G,L)=1$ and $L$ is square-free.  It follows that $H$ has no zeroes at the poles of ${\rm trace}(P(t)Q)$. Let $u=\frac{1}{t}$. It can be shown that the $u$-derivative at $u=0$ of the objective function does not vanish generically, hence $H(t)$ has no zero at infinity. The critical points of the objective function thus correspond to the finite zeroes of $H$ and 
$\mathcal S$ has ED degree $2\delta - 2$.

    It remains to evaluate $\delta$. The polynomial $s_i$ has the cyclotomic factorization
    \begin{align*}
        s_i(t) = \frac{t^{2(n_i+1)}-1}{t^2-1} = \prod_{\substack{e>2 \\ e|2(n_i+1)}} \Phi_e(t)\,,
    \end{align*}
    where $\Phi_e$ is the $e$-th cyclotomic polynomial. Since all the factors have multiplicity $1$ and ${\rm deg}(\Phi_e)=\varphi(e)$, the statement follows. In particular, if ${\rm gcd}(n_i+1,n_j+1)=1$ for all $1\leq i<j\leq d$, and if some $e>2$ divides $2(n_i+1)$, then it cannot divide any other $2(n_j+1)$. The result follows from $\sum_{e | M}\varphi(e) = M$ for every $M \in \mathbb Z_{>0}$. 
\end{proof}

\subsection{ED-generality}
The critical ideal $I_\mathcal{M}^Q$ can be computed by saturating only with respect to $I_{\mathrm{sing}}(\mathcal{M})$ (and hence with no saturation if $\mathcal{M}$ is smooth) if the hypersurface $\{\mathrm{trace}(XX^\top)=0 \}$ intersects $\mathcal{M}$ transversally. In this case, the subvariety $\mathcal{M}$ is called ED-general 
\cite[Section 3]{FRS}. Theorem 3.3 in \cite{FRS} gives a criterion on ED-generality of $\mathcal{M} \subset \mathrm{Gr}(2,n)$ based on the dimension of this subvariety. Now we generalize this result to subvarieties of $\mathrm{Gr}(k,n)$. 

\begin{theorem} \label{thm:codim-sing-locus} The codimension of the singular locus of 
${\rm Gr}(k,n) \cap \{\mathrm{trace}(XX^\top)=0\}$ is at most $\binom{k+1}{2}$. Therefore, no subvariety $\mathcal{M}$ of ${\rm Gr}(k,n)$ with $\dim(\mathcal{M}) \geq \binom{k+1}{2}$ is ED-general.     
\end{theorem}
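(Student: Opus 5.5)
The plan is to make the hypersurface $\{\mathrm{trace}(XX^\top)=0\}$ explicit, compute where its intersection with the Grassmannian is singular, and then bound the codimension of that locus by a dimension count. Represent a point of $\mathrm{Gr}(k,n)$ as the row space of a full-rank $k\times n$ matrix $A$, so that $p_I=\det A_I$. Every Plücker coordinate occurs exactly $k$ times among the signed entries $x_{iJ}$ of the cocircuit matrix, so $\mathrm{trace}(XX^\top)=k\sum_I p_I^2$. By Cauchy--Binet, $\sum_I p_I^2=\det(AA^\top)$. The intersection is therefore
\[
Z \;=\; \{V\in \mathrm{Gr}(k,n) : \text{the bilinear form } \textstyle\sum x_iy_i \text{ restricted to } V \text{ is degenerate}\}.
\]
This is a hypersurface section of a smooth variety, so it is singular exactly where the differential of the defining equation vanishes on the tangent space of $\mathrm{Gr}(k,n)$.

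Next I work in the chart of $k\times n$ matrices, where the tangent directions are all $k\times n$ matrices $B$. The differential is
\[
d\det(AA^\top)(B)=2\,\mathrm{trace}\bigl(\mathrm{adj}(AA^\top)\,AB^\top\bigr),
\]
so $V$ is a singular point of $Z$ iff $\det(AA^\top)=0$ and $\mathrm{adj}(AA^\top)A=0$. I then split according to the corank of the Gram matrix $AA^\top$.
\begin{enumerate}
\item If the corank is exactly one, then $\mathrm{adj}(AA^\top)=ww^\top$ with $w$ spanning the kernel. The condition becomes $w^\top A=0$, which is impossible because $A$ has full rank. Such points are smooth points of $Z$.
\item If the corank is at least two, then $\mathrm{adj}(AA^\top)=0$ and the point is singular.
\end{enumerate}
Hence $\mathrm{Sing}(Z)=\{V:\dim(V\cap V^\perp)\ge 2\}$. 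In particular it contains the isotropic Grassmannian $\{AA^\top=0\}$, which is cut out by $\binom{k+1}{2}$ equations.

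For the codimension count I use the Gram map $A\mapsto AA^\top\in\mathrm{Sym}^2\CC^k$. Its differential $B\mapsto AB^\top+BA^\top$ is surjective when $A$ has full rank: given $S$, choose $B$ with $AB^\top=S/2$, which is possible since $A$ has full row rank. So the map is a submersion, and the preimage of any subvariety of $\mathrm{Sym}^2\CC^k$ has the same codimension, provided it is nonempty. The locus $\{AA^\top=0\}$ then has codimension exactly $\binom{k+1}{2}$. This descends to $\mathrm{Gr}(k,n)$ because the conditions are $GL_k$-invariant. So $\mathrm{Sing}(Z)$ contains a component of codimension $\binom{k+1}{2}$, and the codimension of $\mathrm{Sing}(Z)$ is at most $\binom{k+1}{2}$. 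In fact, the corank-$\ge 2$ locus has codimension $3$ in $\mathrm{Sym}^2$, which gives an even sharper bound, but the stated bound is what is needed.

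The step I expect to be the main obstacle is nonemptiness, with the edge cases that come with it. Isotropic $k$-planes exist over $\CC$ only when $k\le n/2$, and the corank-two locus is nonempty only for $k\ge2$ and $n\ge k+2$. For $k=1$, $Z$ is a smooth quadric with empty singular locus, so the statement must be read with the usual convention, or for $k\ge 2$. I would handle this by exhibiting explicit vectors such as $e_1+ie_2$, which are isotropic and pairwise orthogonal. If $k>n/2$, I would use the duality $\mathrm{Gr}(k,n)\cong\mathrm{Gr}(n-k,n)$ and compare with the corank-$\ge2$ locus.

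Finally, the consequence for ED-generality follows the argument of \cite[Theorem 3.3]{FRS}. Both $\mathcal M$ and $\mathrm{Sing}(Z)$ are projective subvarieties of $\mathrm{Gr}(k,n)$. A Schubert-calculus or ample-cone argument on the Grassmannian, where the relevant classes intersect nontrivially, shows that $\dim\mathcal M\ge\binom{k+1}{2}\ge\mathrm{codim}\,\mathrm{Sing}(Z)$ forces $\mathcal M\cap\mathrm{Sing}(Z)\neq\emptyset$. At a point of this intersection that is smooth on $\mathcal M$, the differential of $\det(AA^\top)$ vanishes on all of $T\,\mathrm{Gr}(k,n)\supseteq T\mathcal M$. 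Therefore $\{\mathrm{trace}(XX^\top)=0\}$ does not meet $\mathcal M$ transversally, and $\mathcal M$ is not ED-general. A point that is singular on $\mathcal M$ needs a small extra argument, for instance a generic choice within the intersection.
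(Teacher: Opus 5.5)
Your analysis of the first sentence is correct and takes a different route from the paper. The paper works in the chart $x_{[k]}=1$ and checks, by explicit minor identities, that $\langle F_{k,n}, \partial F_{k,n}/\partial b_{ij}\rangle$ equals the ideal $H(k,n)$ of the $\binom{k+1}{2}$ entries of the leading $k\times k$ block of $XX^\top$. It then applies Krull's height theorem. You instead use $\mathrm{trace}(XX^\top)=k\det(AA^\top)$ and the adjugate to get $\mathrm{Sing}(Z)=\{V:\dim(V\cap V^\perp)\ge 2\}$. The two descriptions agree: $XX^\top=A^\top\mathrm{adj}(AA^\top)A$, so in that chart the leading block is exactly $\mathrm{adj}(AA^\top)$. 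Your submersion $A\mapsto AA^\top$ gives the exact codimension $3$. It also forces you to address nonemptiness, which Krull's theorem requires and the paper never checks; you are right that the locus is empty for $k=1$ and $k=n-1$. One slip: for $k\ge 3$ the isotropic locus $\{AA^\top=0\}$ is not a component of $\mathrm{Sing}(Z)$. Every component has codimension exactly $3$, being locally the preimage of the corank-$\ge 2$ determinantal locus. The inequality you draw from it is still valid.

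The genuine gap is the final step, and it cannot be closed. No Schubert-calculus or ample-cone argument turns $\dim\mathcal M\ge\mathrm{codim}\,\mathrm{Sing}(Z)$ into $\mathcal M\cap\mathrm{Sing}(Z)\neq\emptyset$. Ampleness only forces a subvariety to meet a hypersurface section, and in a Grassmannian subvarieties of complementary dimension can be disjoint.

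Here it fails concretely. Take $k=2$, $n=5$ and $\mathcal M=\{V\ni e_1\}$, the linear $\PP^3$ cut out by $x_{ij}=0$ for $2\le i<j\le 5$, so $\dim\mathcal M=3=\binom{3}{2}$.
\begin{itemize}
\item A plane containing the non-isotropic vector $e_1$ is never totally isotropic, so $\mathcal M\cap\mathrm{Sing}(Z)=\emptyset$. In classes, $[\mathrm{Sing}(Z)]=4\sigma_{2,1}$, $[\mathcal M]=\sigma_3$, and $\sigma_3\sigma_{2,1}=0$.
\item $\sum_I x_I^2$ restricts to $\sum_{j=2}^5 x_{1j}^2$, a smooth quadric in $\mathcal M$. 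So $\mathcal M$ meets $\{\mathrm{trace}(XX^\top)=0\}$ transversally and is ED-general.
\item Explicitly, on $\mathcal M$ one has $XX^\top=(w^\top w)e_1e_1^\top+ww^\top$ with $w=(0,x_{12},\ldots,x_{15})$. The critical equations reduce to the eigenvector problem for the lower $4\times 4$ block of $Q$, giving ED degree $4$ with no saturation.
\end{itemize}
Replacing $e_1$ by a nondegenerate $(k-1)$-dimensional subspace $U$ gives ED-general models $\{V\supseteq U\}\cong\PP^{n-k}$ for every $k$. Their dimension exceeds $\binom{k+1}{2}$ once $n$ is large; for $k=1$ this is $\mathrm{Gr}(1,n)$ itself.

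So the second sentence of the statement is false as written. The paper's proof has the same hole: it asserts without justification that $\mathcal M$ meets the singular locus. What your argument does establish is the correct weaker claim: if $\mathcal M$ meets $\mathrm{Sing}(Z)$ at a smooth point of $\mathcal M$, then $\mathcal M$ is not ED-general.

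Your suggested patch for points singular on $\mathcal M$ also fails. If $\mathcal M\cap\mathrm{Sing}(Z)\subseteq\mathrm{Sing}(\mathcal M)$, those points are removed by saturating with $I_{\mathrm{sing}}(\mathcal M)$ and create no obstruction. Choosing a generic point of the intersection cannot change that.
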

We will prove this theorem using Proposition \ref{prop:singular-locus-transversality} below which is of independent interest. For this purpose, we study the singular locus of ${\rm Gr}(k,n) \cap \{\mathrm{trace}(XX^\top)=0\}$ more closely.

Let $A = [\mathrm{Id}_k \,\,\, B]$ be the $k \times n$ matrix parametrizing a chart of ${\rm Gr}(k,n)$ where $B = (b_{ij})$ with $b_{ij}$ the entry in row $i=1,\ldots, k$ and column $j=k+1, \ldots, n$ of $A$. For every $I \subset [k]$ and $J \subset [n] \setminus [k] = \{k+1, \ldots, n\}$ with $|I| = |J|$, let $B_{I,J}$ be the submatrix of $A$ using rows and columns indexed by $I$ and $J$. The morphism $\CC^{k(n-k)} \to {\rm Gr}(k,n) \cap \{x_{[k]} =1\}$
given by 
\[A \mapsto \bigl(\det(B_{I,J}) \, :\, I \subset [k], \, J \subset [n] \setminus [k], \, |I|=|J|\bigr)\]
where we set $\det(B_{\emptyset, \emptyset}) =1$ is an isomorphism. This is true for any other affine chart of ${\rm Gr}(k,n)$ given by $x_K =1$ where now the submatrix of $A$ consisting of the columns indexed by $K$ is the identity matrix $\mathrm{Id}_k$. 
Under this isomorphism, the image of the vanishing locus~of
 $$  F_{k,n} := \sum_{\substack{I \subset [k], J \subset [n] \setminus [k] \\ |I| = |J|}} \det(B_{I,J})^2 = \sum_{I \subset [k]} \sum_{\substack{J \subset [n] \setminus [k] \\ |J| = k - |I|}} \det(B_{[k] \setminus I, J})^2$$
 is the codimension one subvariety ${\rm Gr}(k,n) \cap \{\mathrm{trace}(XX^\top)=0\}$
in the chart given by $x_{[k]}=1$.
Thus, the singular locus of ${\rm Gr}(k,n) \cap \{\mathrm{trace}(XX^\top) = 0\}$ in the affine chart $x_{[k]}=1$ is the isomorphic image of the singular locus of $V(F_{k,n})$, 
which is defined by the ideal 
$$I_{\mathrm{sing}}(k,n) = \langle F_{k,n}\rangle + \biggl \langle  \frac{\partial F_{k,n}}{\partial b_{ij}} \,: \: i\in [k], j\in [n] \setminus [k] \biggr \rangle.$$  
Let $X$ be the cocircuit matrix of $A$ and let $H(k,n)$ be the ideal generated by the $\binom{k+1}{2}$ unique entries of
the leading principal $k\times k$ (symmetric) 
submatrix of $XX^\top$. For $1 \leq i \leq j \leq k$, the $(i,j)$ entry of this submatrix is
$$H_{ij} \, := \, \sum_{S \subset [k] \setminus \{i,j\}}  (-1)^{|S_{ij}|} \left( \sum_{\substack{J \subset [n] \setminus [k] \\ |J|= k-|S|-1}} \det(B_{[k]\setminus \{S \cup i\}, J}) \det(B_{[k]\setminus \{S \cup j\},J})\right)$$
where $S_{ij} = \{s \in S \, :\, i < s <j\}$.
With this, we define $H(k,n) = \langle H_{ij} \, : \, 1\leq i \leq j \leq k\rangle$.
\begin{proposition} \label{prop:singular-locus-transversality}
The singular locus of $V(F_{k,n})$ is generated by $H_{ij}$ for $1 \leq i \leq j \leq k$: 
\[I_{\mathrm{sing}}(k,n) = H(k,n).\]
\end{proposition}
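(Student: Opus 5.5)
The plan is to reduce everything to the $k\times k$ Gram matrix $G := AA^\top = \mathrm{Id}_k + BB^\top$ of the chart $A=[\mathrm{Id}_k\;B]$. Both ideals should then turn out to be generated by entries of $\mathrm{adj}(G)$ and $\det(G)$, and the equality $I_{\mathrm{sing}}(k,n)=H(k,n)$ should follow from the identities $G\,\mathrm{adj}(G)=\mathrm{adj}(G)\,G=\det(G)\,\mathrm{Id}_k$.

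\emph{Step 1: $F_{k,n}=\det(G)$.} The terms $\det(B_{[k]\setminus I,J})$ are, up to sign, exactly the maximal minors $x_K$ of $A$ (Laplace expansion along the identity columns). Hence $F_{k,n}=\sum_K x_K^2$, and Cauchy--Binet gives $\sum_K x_K^2=\det(AA^\top)=\det(\mathrm{Id}_k+BB^\top)$. Jacobi's formula then gives
\[
\frac{\partial F_{k,n}}{\partial b_{ij}}=\mathrm{trace}\Bigl(\mathrm{adj}(G)\,\frac{\partial G}{\partial b_{ij}}\Bigr)=2\,\bigl(\mathrm{adj}(G)B\bigr)_{ij},
\]
using that $\mathrm{adj}(G)$ is symmetric. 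Therefore
\[
I_{\mathrm{sing}}(k,n)=\langle \det G\rangle+\langle (\mathrm{adj}(G)B)_{ij}\rangle .
\]

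\emph{Step 2: the $H_{ij}$ are the entries of $\mathrm{adj}(G)$.} By \cite[Corollary 2.5]{DFRS}, $XX^\top=\tfrac{1}{k}\mathrm{trace}(XX^\top)\,P$. Here $P=A^\top G^{-1}A$ is the orthogonal projection onto the row space of $A$, and $\mathrm{trace}(XX^\top)=k\sum_K x_K^2=k\det G$, since each $x_K$ occurs in exactly $k$ rows of $X$. So $XX^\top=\det(G)\,A^\top G^{-1}A=A^\top\mathrm{adj}(G)A$ as polynomial identities on the chart. Its leading principal $k\times k$ block is $\mathrm{adj}(G)$, because the first $k$ columns of $A$ form $\mathrm{Id}_k$.

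Hence $H(k,n)$ is generated by the entries of $\mathrm{adj}(G)$. For safety I would also check this directly against the displayed formula for $H_{ij}$. Expanding the $(i,j)$ cofactor of $\mathrm{Id}_k+BB^\top$ by Cauchy--Binet gives a sum over pairs of equal-size minors of $B$; the sets $S$ come from picking identity columns, and the sign $(-1)^{|S_{ij}|}$ comes from moving row/column $i$ past the indices in $S$. I expect this sign bookkeeping (or, equivalently, pinning down the normalization in \cite{DFRS}) to be the main obstacle. It is routine but fiddly, and it is where the argument could go wrong.

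\emph{Step 3: the two inclusions.}
\begin{itemize}
\item $I_{\mathrm{sing}}\subseteq H$: the entries of $\mathrm{adj}(G)B$ are $\mathbb{C}[b]$-combinations of entries of $\mathrm{adj}(G)$. Moreover $\det G=(G\,\mathrm{adj}(G))_{11}=\sum_j G_{1j}\,\mathrm{adj}(G)_{j1}\in H$.
\item $H\subseteq I_{\mathrm{sing}}$: from $\mathrm{adj}(G)(\mathrm{Id}_k+BB^\top)=\det(G)\,\mathrm{Id}_k$ we get
\[
\mathrm{adj}(G)=\det(G)\,\mathrm{Id}_k-\bigl(\mathrm{adj}(G)B\bigr)B^\top .
\]
Every entry on the right lies in $\langle\det G\rangle+\langle(\mathrm{adj}(G)B)_{ij}\rangle=I_{\mathrm{sing}}(k,n)$.
\end{itemize}
This yields $I_{\mathrm{sing}}(k,n)=H(k,n)$. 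It also prepares Theorem~\ref{thm:codim-sing-locus}, since $H$ has $\binom{k+1}{2}$ generators and Krull's height theorem bounds the codimension of its zero set by $\binom{k+1}{2}$.
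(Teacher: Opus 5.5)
Your proof is correct. It establishes the same identities as the paper, but derives them by a different mechanism.

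Read entrywise, your relation $\mathrm{adj}(G)=\det(G)\,\mathrm{Id}_k-(\mathrm{adj}(G)B)B^\top$, combined with $(\mathrm{adj}(G)B)_{it}=\tfrac12\,\partial F_{k,n}/\partial b_{it}$, is exactly the pair $H_{ii}=F_{k,n}-\tfrac12\sum_t b_{it}\,\partial F_{k,n}/\partial b_{it}$ and $H_{ij}=-\tfrac12\sum_t b_{jt}\,\partial F_{k,n}/\partial b_{it}$. The paper uses this pair for $H(k,n)\subseteq I_{\mathrm{sing}}(k,n)$. Your formula $\partial F_{k,n}/\partial b_{it}=2\sum_j \mathrm{adj}(G)_{ij}b_{jt}$ and the membership $\det G\in\langle \mathrm{adj}(G)_{ij}\rangle$ are the paper's reverse inclusion. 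Indeed, the paper's step $kF_{k,n}=\sum_{i=1}^n X_iX_i^\top$ with $X_t=\sum_{j\le k} b_{jt}X_j$ amounts to $\mathrm{trace}(G\,\mathrm{adj}(G))=k\det G$.

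The difference is in how these identities are obtained. The paper checks them by hand in cocircuit coordinates, expanding minors of $B$ with explicit signs, and writes out only the diagonal case. You get all of them at once from $F_{k,n}=\det(AA^\top)$, $H=\mathrm{adj}(AA^\top)$ and $G\,\mathrm{adj}(G)=\det(G)\,\mathrm{Id}_k$, using only Cauchy--Binet and Jacobi's formula. This is shorter, handles $i\neq j$ at no extra cost, and exposes the geometry. $V(H(k,n))$ is the locus where $\mathrm{rank}(AA^\top)\le k-2$. That is the preimage under $B\mapsto \mathrm{Id}_k+BB^\top$ of the codimension-$3$ variety of symmetric matrices of corank at least $2$. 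So every component of $V(H(k,n))$ has codimension at most $3$, which is sharper than the bound $\binom{k+1}{2}$ from Krull's theorem once $k\geq 3$.

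The sign bookkeeping you flag as the main risk is not needed. $H(k,n)$ is \emph{defined} as the ideal of entries of the leading $k\times k$ block of $XX^\top$, and an ideal does not see the signs of its generators. A direct comparison with the displayed formula would in fact turn up a harmless off-diagonal sign mismatch. For $k=2$ that formula gives $H_{12}=\sum_t b_{1t}b_{2t}$, whereas $X_1X_2^\top=\mathrm{adj}(G)_{12}=-\sum_t b_{1t}b_{2t}$.

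You can also drop the appeal to \cite{DFRS}, together with the density argument it requires off $\{\det G=0\}$. Write $a_i$ for the $i$-th column of $A$ and expand $x_{iJ}=\det[a_i\,|\,A_J]$ along the first column. This gives $X=A^\top C$, where $C$ has columns $c_J=\bigl((-1)^{p+1}\det A_{[k]\setminus\{p\},\,J}\bigr)_{p\in[k]}$. Cauchy--Binet then gives $CC^\top=\mathrm{adj}(AA^\top)$, so $XX^\top=A^\top\,\mathrm{adj}(G)\,A$ holds identically.
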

\begin{proof}
One can show that 
$$ H_{ii} = F_{k,n} - \frac{1}{2} \sum_{t=k+1}^n b_{it}\frac{\partial F_{k,n}}{\partial b_{it}} \quad \mbox{  and  } \quad H_{ij} = -\frac{1}{2} \sum_{t=k+1}^n b_{jt}\frac{\partial F_{k,n}}{\partial b_{it}}.$$
For instance, observe that $\frac{\partial \det(B_{[k]\setminus I, J})^2}{\partial b_{it}} = 0$ for all $t \in [n]\setminus [k]$ if $i \in I$. Moreover, if $ i \not \in I$, $\sum_{t=k+1}^n b_{it} \frac{\partial \det(B_{[k]\setminus I,J})^2}{\partial b_{it}} = 2 \det(B_{[k]\setminus I,J})^2$.
Therefore 
\[F_{k,n} - \frac{1}{2} \sum_{t=k+1}^n b_{it}\frac{\partial F_{k,n}}{\partial b_{it}} = \sum_{\substack{I \subset [k] \\ i \in I}} \sum_{\substack{J \subset [n] \setminus [k] \\ |J| = k - |I|}} \det(B_{[k] \setminus I, J})^2 = \sum_{S \subset [k] \setminus \{i\}}   \sum_{\substack{J \subset [n] \setminus [k] \\ |J|= k-|S|-1}}\!\!\!\! \det(B_{[k]\setminus \{S \cup i\}, J})^2 = H_{ii}.\]
This shows $H(k,n) \subset I_{\mathrm{sing}}(k,n)$.

For the converse, we first show that $F_{k,n} \in H(k,n)$. This follows from the fact that the cocircuit matrix $X$ has rank $k$, and in the chart $x_{[k]}=1$ the first $k$
rows are linearly independent. We can write the rest of its rows explicitly as a linear combination of the first $k$ rows. In fact, if we let $X_i$ to be the $i$-th row of $X$, for $t\in[n] \setminus [k]$, we have $X_t = \sum_{j=1}^k b_{jt} X_j$. Now since $k F_{k,n} = X_1X_1^\top + \cdots + X_nX_n^\top$  and since $H_{ij} = X_iX_j^\top$, we conclude that $F_{k,n} \in H(k,n)$.
Next we need to argue that all partial derivatives of $F_{k,n}$ are in $H(k,n)$.  We will show that 
$$\frac{\partial F_{k,n}}{\partial b_{it}} = 2\sum_{j=1}^k b_{jt}H_{ij} = 2 \sum_{j=1}^k b_{jt} X_iX_j^\top = 2 X_i \left(\sum_{j=1}^k b_{jt} X_j\right)^\top. $$
First, we compute that
\begin{equation} \label{eq:partial_F}
\frac{\partial F_{k,n}}{\partial b_{it}} = 2 \sum_{\substack{I \subset [k] \\ i \not \in I}} \sum_{\substack{J \subset [n] \setminus [k] \\ |J| = k- |I| \\ t \in J}} \mathrm{sign}(i, [k] \setminus I) \mathrm{sign}(t,J) \det(B_{[k] \setminus I, J}) \det(B_{[k] \setminus \{I \cup i\}, J \setminus t})
\end{equation}
where $\mathrm{sign}(\ell, S)$ is $(-1)^r$ if $\ell \in S$ is in position $r$ of the ordered set $S$. Next we compute the entry of $\sum_{j=1}^k b_{jt} X_j$ indexed by the subset $I \cup J \subset [n]$ of size $k-1$ where $I \subset [k]$ and $J \subset [n] \setminus [k]$. This entry is equal to 
$$ \sum_{j=1}^k b_{jt} \det(A_{[k], j \cup I \cup J}) = \mathrm{sign}(t, J\cup t) \det(B_{[k] \setminus I, J \cup t})$$
whenever $t \not \in J$ and $0$ otherwise. 
The entry in $X_i$ indexed by the same $I \cup J$ is zero if $i \in I$; otherwise it is equal to $ \mathrm{sign}(i, [k] \setminus I)\det(B_{[k] \setminus \{I \cup i\}, J})$. Putting this together we conclude that 
$$ 2 X_i \left(\sum_{j=1}^k b_{jt} X_j\right)^\top = 2 \sum_{\substack{I \subset [k] \\ i \not \in I}} \sum_{\substack{J \subset [n] \setminus [k] \\ |J| = k- |I|-1 \\ t \not \in J}} \mathrm{sign}(i, [k] \setminus I) \mathrm{sign}(t,J \cup t) \det(B_{[k] \setminus I, J \cup t}) \det(B_{[k] \setminus \{I \cup i\}, J}).$$
This expression is equal to \eqref{eq:partial_F}.
Hence $I_{\mathrm{sing}}(k,n) \subset H(k,n)$ and the result is proved. 
\end{proof}

\begin{proof}[Proof of Theorem \ref{thm:codim-sing-locus}] Proposition~\ref{prop:singular-locus-transversality} holds for any affine chart of ${\rm Gr}(k,n)$ given by setting a Pl\"ucker coordinate equal to one. Therefore, an affine open cover of the singular locus of ${\rm Gr}(k,n) \cap \{\mathrm{trace}(XX^\top)=0\}$ is given by the isomorphic image of each $V(H(k,n))$ under the Pl\"ucker embedding. By Krull's Principal Ideal Theorem, these affine varieties have codimension at most $\binom{k+1}{2}$, implying that the singular locus
of ${\rm Gr}(k,n) \cap \{\mathrm{trace}(XX^\top)=0\}$ also has at most this codimension. 
If $\dim(\mathcal M) \geq \binom{k+1}{2}$, then $\mathcal M$ intersects this singular locus and at these intersection points the augmented Jacobian matrix \eqref{eq:augJac} drops rank, regardless of what the model or data are.
Since these points are independent of the data, they form an irreducible component of the degeneracy locus of \eqref{eq:augJac}. Thus the model is not ED-general.
\end{proof}

\begin{corollary}[{\cite[Theorem 3.3]{FRS}}] 
No subvariety $\mathcal{M}$ of ${\rm Gr}(2,n)$  with 
$\dim(\mathcal{M}) \geq 3$ is ED-general. If $\dim(\mathcal{M}) < 3$ and $\mathcal{M}$ is generic, then $\mathcal{M}$ is ED-general.
\end{corollary}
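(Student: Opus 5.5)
The first claim is Theorem~\ref{thm:codim-sing-locus} with $k=2$, where $\binom{k+1}{2}=3$. So I will apply that theorem directly. Any subvariety of ${\rm Gr}(2,n)$ of dimension at least $3$ meets the singular locus of ${\rm Gr}(2,n)\cap\{\mathrm{trace}(XX^\top)=0\}$, because that locus has codimension at most $3$. At those intersection points the augmented Jacobian \eqref{eq:augJac} drops rank for every $Q$. Such points give a data-independent component of the degeneracy locus, so $\mathcal M$ is not ED-general. This part needs nothing new.

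For the second claim, take $\dim\mathcal M\le 2$ and $\mathcal M$ generic. I would show that $\mathcal M$ meets the hypersurface $\{\mathrm{trace}(XX^\top)=0\}$ transversally inside ${\rm Gr}(2,n)$, away from $\mathrm{Sing}(\mathcal M)$; this is exactly the ED-general condition. First, I would compute the singular locus $V(H(2,n))$ using Proposition~\ref{prop:singular-locus-transversality}. For $k=2$ it is cut out by the three entries $X_1X_1^\top$, $X_1X_2^\top$, $X_2X_2^\top$ in each chart. Its codimension should be exactly $3$, not smaller. To check this, I would identify it with the set of planes whose restriction of the quadratic form $\sum x_i^2$ vanishes identically, namely the isotropic planes. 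That is the orthogonal Grassmannian of isotropic $2$-planes in $\CC^n$, of dimension $2(n-2)-3$, so its codimension in ${\rm Gr}(2,n)$ is $3$. Consequently a generic $\mathcal M$ of dimension at most $2$ avoids this locus, for instance a generic translate under $\mathrm{GL}_n$ acting on ${\rm Gr}(2,n)$, by Kleiman transversality.

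Next, off this singular locus the hypersurface section $Z={\rm Gr}(2,n)\cap\{\mathrm{trace}=0\}$ is smooth. I would apply Kleiman--Bertini to the transitive $\mathrm{GL}_n$-action on ${\rm Gr}(2,n)$. For a generic $g$, the variety $g\mathcal M_{\rm reg}$ meets $Z_{\rm reg}$ transversally and avoids $\mathrm{Sing}(Z)$. Transversality means the row of $x_I$ in \eqref{eq:augJac} is independent of the Jacobian of $\mathcal M$ at the intersection points. Hence the degeneracy locus of the augmented Jacobian has no components supported on $\{\mathrm{trace}(XX^\top)=0\}$, and no saturation by $\mathrm{trace}(XX^\top)$ is needed.

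The main obstacle is interpreting ``generic'' so that Kleiman applies. $\mathrm{GL}_n$ does not preserve the trace form, so moving $\mathcal M$ by $\mathrm{GL}_n$ is legitimate, whereas moving by $O_n$ alone would not be. I would state explicitly that genericity is taken with respect to this action, or with respect to a parameter family of models that is dense enough. I would also double-check the codimension-$3$ count in the small cases $n\le 4$, where the isotropic Grassmannian may be empty or reducible. In those cases avoidance is only easier.
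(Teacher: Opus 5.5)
Your argument for the second assertion is sound, but the first half rests on a step that fails, and the first assertion is in fact false as stated for $n\ge 5$.

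You say every subvariety of dimension at least $3$ meets the singular locus of ${\rm Gr}(2,n)\cap\{\mathrm{trace}(XX^\top)=0\}$ because that locus has codimension at most $3$. That is the dimension theorem in $\PP^N$. It does not hold inside a Grassmannian, where subvarieties of complementary dimension can be disjoint.

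Your own description of the singular locus as the totally isotropic planes gives a counterexample. Let $\mathcal M=\{W\in{\rm Gr}(2,n): e_1\in W\}$. In Pl\"ucker coordinates this is the linear space $\{x_{ij}=0 \text{ for } 2\le i<j\le n\}\cong\PP^{n-2}$.
\begin{itemize}
\item No plane containing the non-isotropic vector $e_1$ is totally isotropic, so $\mathcal M\cap V(H(2,n))=\emptyset$.
\item $\mathrm{trace}(XX^\top)$ restricts to $2\sum_{j\ge 2}x_{1j}^2$, a smooth quadric in $\PP^{n-2}$. Hence $\mathcal M$ meets the hypersurface transversally and is ED-general, although $\dim\mathcal M=n-2\ge 3$ for $n\ge 5$.
\item Directly: $\mathrm{trace}(XX^\top Q)$ restricts to $Q_{11}\sum_j x_{1j}^2+x^\top Q'x$. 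Here $Q_{11}$ is the $(1,1)$ entry of $Q$, $x=(x_{12},\dots,x_{1n})$, and $Q'$ is the lower right $(n-1)\times(n-1)$ block. So the critical points are the $n-1$ eigenvectors of $Q'$, and no saturation by the trace is needed.
\end{itemize}

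Citing Theorem~\ref{thm:codim-sing-locus} does not close the gap, since its proof makes the same move (``then $\mathcal M$ intersects this singular locus''). What does hold is that $\mathcal M$ is not ED-general if it meets $V(H(2,n))$ in its smooth locus. Nonemptiness of that intersection needs an extra hypothesis, for example:
\begin{itemize}
\item $n=4$: a threefold in ${\rm Gr}(2,4)$ is an ample divisor, hence meets the curve $V(H(2,4))$;
\item $\mathcal M$ a complete intersection of ${\rm Gr}(2,n)$ with hypersurfaces: apply the projective dimension theorem inside the $(2n-7)$-dimensional isotropic Grassmannian.
\end{itemize}

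For the second assertion your route differs from the paper's. The paper only notes that $H_{11},H_{12},H_{22}$ form a regular sequence. So $V(H(2,n))$ has codimension exactly $3$, and a generic $\mathcal M$ of dimension at most $2$ misses it. Transversality at smooth points of the hypersurface section is left implicit.

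Your identification of $V(H(2,n))$ with the isotropic Grassmannian $OG(2,n)$ is correct. For $k=2$ the cocircuit matrix is $X=A^\top RA$ with $R=\begin{bmatrix}0&1\\-1&0\end{bmatrix}$. Hence $XX^\top=A^\top\,\mathrm{adj}(AA^\top)\,A$. In the chart $A=[\mathrm{Id}_2\,\,B]$, the $H_{ij}$ are the entries of $\mathrm{adj}(AA^\top)$, i.e.\ up to sign those of $\mathrm{Id}_2+BB^\top$.

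Kleiman's theorem for the transitive $\mathrm{GL}_n$-action then gives both avoidance of $V(H(2,n))$ and transversality of $\mathcal M_{\mathrm{reg}}$ with the smooth part. That is what ED-generality actually requires. To turn transversality inside ${\rm Gr}(2,n)$ into independence of the $x_I$-row of \eqref{eq:augJac} from $\mathcal J_{\mathcal M}$, use that the scheme ${\rm Gr}(2,n)\cap\{\mathrm{trace}(XX^\top)=0\}$ is smooth off $V(H(2,n))$. This is Proposition~\ref{prop:singular-locus-transversality}.

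Your remark that genericity must be taken with respect to $\mathrm{GL}_n$, not $\mathrm{O}(n)$, is right: $\mathrm{O}(n)$ preserves the trace form and hence ED-generality. This half is more complete than the paper's argument.
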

\begin{proof}
The ideal $H(2,n)$ is generated by the three polynomials $H_{11}, H_{22}, H_{12}$. These form a regular sequence, and thus $H(2,n)$ has codimension $3$.
\end{proof}

\section{ED Discriminant of the Grassmannian} \label{sec:ed-discriminant}
In the previous section, we reviewed the Euclidean distance optimization problem for subvarieties in the Grassmannian. 
We will now focus on the 
Euclidean distance optimization problem \eqref{eq:trace-opt} on $\mathcal{M} = \mathrm{pGr}(k,n)$: 
\begin{align}\label{eq:proj-optprob}
 \textrm{maximize} \quad \mathrm{trace}(PQ) \quad \textrm{subject to} \quad P \in \mathrm{pGr}(k,n).   
\end{align}
 
 \begin{remark} \label{rem:O(n)-equivariant} 
 We note that \eqref{eq:proj-optprob} is $\mathrm{O}(n)$-equivariant
 where the complex orthogonal group $\mathrm{O}(n)$ acts on $\mathrm{pGr}(k,n)$ by conjugation. This action is transitive by \cite[Theorem XI.4]{gantmacher}.  
 \end{remark}
 We will study the set of  data points $Q$ which have a critical point set of cardinality different than the ED degree.
 These special data points  comprise the \emph{ED discriminant} \cite[Section 7]{DHOST} of $\mathrm{pGr}(k,n)$. In this section, we describe this ED discriminant  and give the number of critical points depending on the number of Jordan blocks of $Q$.

We first describe the critical points of the problem \eqref{eq:proj-optprob} for a general data matrix $Q$. The result below and its proof are straightforward generalizations of those in \cite[Proposition 4.1]{FRS}. 

\begin{proposition}\label{prop:cps}
    The critical point set of \eqref{eq:proj-optprob} is 
    \begin{align*}
    \{P \in \mathrm{pGr}(k,n): PQ = QP \} = 
        \{P \in \mathrm{pGr}(k,n): 
            P \textrm{ projects onto a $Q$-invariant subspace}
        \}.
    \end{align*}
\end{proposition}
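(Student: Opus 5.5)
The plan is to compute the tangent space of $\mathrm{pGr}(k,n)$ at a point $P$ and impose that the gradient of the linear objective $P \mapsto \mathrm{trace}(PQ)$, namely $Q$ itself under the trace pairing on $\mathrm{Sym}^2\CC^n$, is orthogonal to that tangent space. Since $\mathrm{pGr}(k,n)$ is a single $\mathrm{O}(n)$-orbit under conjugation (Remark~\ref{rem:O(n)-equivariant}), it is smooth and its tangent space at $P$ is the image of the differential of the orbit map:
\[
T_P\,\mathrm{pGr}(k,n) = \{\,AP - PA \,:\, A \in \mathfrak{so}(n)\,\} = \{\,[A,P] : A^\top = -A\,\}.
\]
As a consistency check, differentiating $P^2 = P$ shows that every tangent vector $\dot P$ satisfies $\dot P P + P\dot P = \dot P$. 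Moreover, every $[A,P]$ with $A$ skew is symmetric and satisfies this relation. A dimension count gives $k(n-k)$ either way, so the description is complete. The condition $\mathrm{trace}(P)=k$ only selects a connected component and imposes nothing extra.

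With this description, $P$ is critical if and only if $\mathrm{trace}([A,P]\,Q) = 0$ for all skew $A$. By cyclicity of the trace, $\mathrm{trace}(AP Q - PAQ) = \mathrm{trace}(A(PQ - QP))$. Since $P$ and $Q$ are symmetric, $PQ - QP$ is skew-symmetric. The bilinear form $(A,B) \mapsto \mathrm{trace}(AB)$ is nondegenerate on skew matrices, because for $B$ skew, $\mathrm{trace}(B\cdot B^\top) = -\mathrm{trace}(B^2)$ and pairing with elementary skew matrices $E_{ij}-E_{ji}$ recovers the entries of $B$ even over $\CC$. Hence criticality is equivalent to $PQ = QP$, which gives the first description of the set.

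For the second equality, suppose $PQ=QP$. For $v \in \mathrm{im}(P)$ we have $Qv = QPv = PQv \in \mathrm{im}(P)$, so $\mathrm{im}(P)$ is $Q$-invariant. Conversely, suppose $W = \mathrm{im}(P)$ is $Q$-invariant. Then $\ker P = W^\perp$, taken with respect to the symmetric bilinear form, because $P$ is symmetric and idempotent. Since $Q$ is symmetric, $W^\perp$ is also $Q$-invariant. Over $\CC$ the decomposition $\CC^n = W \oplus W^\perp$ holds because $P$ is an idempotent, even though the form may be isotropic. As $Q$ preserves both summands, it commutes with the projection onto $W$ along $W^\perp$, which is $P$.

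The main obstacle is the tangent space identification over $\CC$, where "orthogonal projection" is with respect to a possibly degenerate-on-subspaces bilinear form. I would handle it algebraically: smoothness and the dimension follow from transitivity of the $\mathrm{O}(n)$ action, which the paper takes from Gantmacher. I would also check that the critical set is computed on the smooth variety without saturation issues, since $\mathrm{pGr}(k,n)$ is smooth.
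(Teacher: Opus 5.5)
Your proof is correct and takes essentially the same route as the paper. Criticality is characterized by $Q$ being orthogonal, under the trace pairing, to the tangent space, and the two descriptions of the critical set are matched by elementary linear algebra that uses the symmetry of $P$ and $Q$. The difference is only in packaging: you describe $T_P\,\mathrm{pGr}(k,n)$ coordinate-free as $\{[A,P] : A^\top = -A\}$ and use $\mathrm{trace}([A,P]Q)=\mathrm{trace}(A(PQ-QP))$, whereas the paper first moves $P$ to $\mathrm{Id}_k\oplus 0_{n-k}$ by $\mathrm{O}(n)$-equivariance and reads off the condition $Q_{12}=0$. Likewise, for the converse inclusion the paper uses $PQP=QP$ followed by transposition, where you use the splitting $\CC^n = W\oplus W^\perp$.
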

\begin{proof}
We first argue that the two sets are equal.
Suppose $P$ projects onto a $Q$-invariant subspace. 
Then for any vector $v \in \mathbb C^n$, the identity $PQPv = QPv$ holds, and thus $PQP = QP$ holds as a matrix identity. 
Taking the transpose of both sides gives the result. 
Conversely, if $PQ = QP$, then $v \in \mathrm{im}(P)$ implies $Qv = QPv = P(Qv) \in \mathrm{im}(P)$, so $\mathrm{im}(P)$ is $Q$-invariant.

We now show that this set is the critical point set of \eqref{eq:proj-optprob}.
Since the optimization problem \eqref{eq:proj-optprob} is $\mathrm{O}(n)$-equivariant, 
 without loss of generality, we may assume that
 $P = \mathrm{Id}_k \oplus 0_{{n-k}}$.
 By \cite[Section 2.3]{BZA}, the tangent space at $P$ is $T_P\mathrm{pGr}(k,n) = \big\{\begin{bmatrix}
        0_{k} & A \\
        A^\top & 0_{n-k}
    \end{bmatrix} : A\in \mathbb C^{k \times (n - k)} \big\}$. 
    The point $P$ is critical for $Q$ if and only if $\mathrm{trace}(\dot PQ) = 0$ for all $\dot P \in T_P\mathrm{pGr}(k,n)$.
    Writing $Q = 
    \begin{bmatrix}
        Q_{11} & Q_{12}\\ Q_{12}^\top & Q_{22}
    \end{bmatrix}$, 
    we have that $P$ is critical for $Q$ if and only if $Q_{12} = 0$. 
    The matrices $Q$ of the form  $Q = Q_{11}\oplus Q_{22}$ are precisely the ones commuting with $P$.
\end{proof}

For a generic symmetric matrix $Q$, the number of $k$-dimensional $Q$-invariant subspaces is $\binom{n}{k}$, each corresponding to a choice of $k$ eigenvectors; see \cite{FH, LLY}. Therefore, by Proposition~\ref{prop:cps}, the Euclidean distance optimization problem \eqref{eq:proj-optprob} has $\binom{n}{k}$ critical points on ${\rm pGr}(k,n)$.
However, there exist special data matrices that do not have $\binom{n}{k}$ critical points. The Zariski closure of these special data is known as the ED discriminant \cite{DHOST} of $\mathrm{pGr}(k,n)$. 
In symbols,
\begin{align*}
    \Sigma = 
    \overline{
    \{Q \in \mathrm{Sym}^2\CC^n: 
    \textrm{the critical point set of \eqref{eq:proj-optprob} does not consist of  $\binom{n}{k}$ reduced points}\}}.
\end{align*}
Suppose first that $Q$ is diagonalizable. 
If the eigenvalues of $Q$ have multiplicity $1$, then \eqref{eq:proj-optprob}  has $\binom{n}{k}$ critical points and $Q \notin \Sigma$.
On the other hand, if $Q$ has an eigenvalue with higher multiplicity, then $Q$ has infinitely many $k$-dimensional invariant subspaces for $1 \leq k \leq n-1$, and thus $Q \in \Sigma$. 
This suggests our main result in this section which is equivalent to Theorem~\ref{thm:disc1}. 
\begin{theorem} \label{thm:disc}
The ED discriminant $\Sigma$ of $\mathrm{pGr}(k,n)$ is equal to the discriminant
\begin{align*}
    \Delta = 
    \{Q \in \mathrm{Sym}^2\mathbb C^n: 
    \det(Q - z\cdot \mathrm{Id}_n) \textrm{ has a multiple root}\}.
\end{align*}
\end{theorem}
Note that the discriminant of the characteristic polynomial of a complex symmetric matrix has an explicit expression
\cite{Ily92}, and this expression is a sum of squares \cite[Theorem~7.11]{Stu02}. 

The argument above proves the result for diagonalizable matrices.
Since diagonalizable complex symmetric matrices are diagonalizable by complex orthogonal matrices \cite[Corollary XI.3.2]{gantmacher}, a dimension count shows that they are not Zariski dense in $\Delta$,
so our argument above is not sufficient to prove Theorem \ref{thm:disc}.
We must examine the more subtle case of defective matrices.
The first observation we make in the defective case is that not every invariant subspace of $Q$ has a symmetric projection. 
If the rows of a $k \times n$ matrix $A$ span a $Q$-invariant subspace, then the symmetric projection onto the space is $P = A^\top(AA^\top)^{-1}A$. 
The matrix $P$ exists if and only if $AA^\top$ has full rank if and only if the row span of $A$ has a basis with no isotropic vectors.  
The next result shows that the invariant subspaces that have a symmetric projection matrix are those that do not break up Jordan chains.  
 
\begin{proposition} \label{prop:gen-eigen}
    Let $Q,P \in \mathrm{Sym}^2\mathbb C^n$ be such that $P^2 = P$ and $QP = PQ$. 
    Then $Q$ gives a $\CC^n$-basis of generalized eigenvectors such that every Jordan chain is in $\mathrm{im}(P)$ or $\mathrm{ker}(P)$.
\end{proposition}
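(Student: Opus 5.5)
Since $P^2=P$ and $P$ is symmetric, we have an orthogonal-type decomposition $\CC^n = \mathrm{im}(P) \oplus \ker(P)$ with $\ker(P) = \mathrm{im}(\mathrm{Id}_n - P)$. Both summands are $Q$-invariant because $QP=PQ$: for $v=Pw$ we have $Qv = QPw = PQw \in \mathrm{im}(P)$, and for $v\in\ker P$ we have $PQv = QPv = 0$. The plan is to build a Jordan basis of $Q$ separately on each summand and then concatenate.

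First, restrict $Q$ to $U=\mathrm{im}(P)$ and to $W=\ker(P)$. Each restriction $Q|_U$ and $Q|_W$ is a linear endomorphism of a finite-dimensional complex vector space. By the Jordan normal form theorem, each has a basis of generalized eigenvectors organized into Jordan chains $v, (Q-\lambda)v, \dots, (Q-\lambda)^{m-1}v$, all lying inside $U$, respectively $W$. This step uses only invariance; the symmetry of $Q$ is not needed.

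Second, take the union of the two bases. It is a basis of $\CC^n$ because $U\oplus W=\CC^n$ is a direct sum. This holds since $P$ is idempotent: $v = Pv + (v-Pv)$, and $U\cap W=0$. Each vector in the union is a generalized eigenvector of $Q$ on all of $\CC^n$, because $(Q-\lambda)^m v=0$ is computed inside the invariant subspace. Each chain is a Jordan chain for $Q$ itself, since $Q$ acts on $U$ and $W$ by restriction. Hence every Jordan chain lies entirely in $\mathrm{im}(P)$ or in $\ker(P)$, and together they give a Jordan basis of $Q$ on $\CC^n$.

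The main subtlety is interpretive rather than technical. The statement should be read as saying that \emph{there exists} a Jordan basis compatible with $P$, not that every Jordan basis is. For a repeated eigenvalue, chains can be recombined so that they straddle both summands. Consequently the Jordan type of $Q$ is the disjoint union of the Jordan types of $Q|_U$ and $Q|_W$, so $\mathrm{im}(P)$ is a sum of whole Jordan blocks. This is the form that will be used afterwards to count the critical points when $Q$ is defective.
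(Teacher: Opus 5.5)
Your proof is correct and is essentially the paper's argument. The paper uses $\mathrm{O}(n)$-transitivity on $\mathrm{pGr}(k,n)$ to put $P = \mathrm{Id}_k \oplus 0_{n-k}$, so that $Q$ becomes block diagonal, and then takes a Jordan basis of each block; this is your $Q$-invariant decomposition $\CC^n = \mathrm{im}(P) \oplus \ker(P)$ written in coordinates. Your intrinsic version avoids the transitivity reduction, and as you note, it needs neither $P$ nor $Q$ to be symmetric.
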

\begin{proof}
    Suppose $\mathrm{rank}(P) = k$. 
    Since $\mathrm{O}(n)$ acts transitively on $\mathrm{pGr}(k,n)$ and $QP = PQ$ is invariant under this action, we assume that $P = \mathrm{Id}_k \oplus 0_{n-k}$.
    Then $Q$ is block diagonal with a $k \times k$ block and an $(n-k) \times (n-k)$ block. 
    The result follows from the Jordan canonical form for each individual block and taking the union of the bases of generalized eigenvectors. 
\end{proof}

Our next result will describe the critical points set of \eqref{eq:proj-optprob} for any symmetric matrix $Q$.
For the remainder of the section, we let $\lambda_1, \ldots, \lambda_\ell$ be the  distinct eigenvalues of $Q$. 
For each $i = 1, \ldots, \ell$, let $S_i$ denote the multiset of sizes of Jordan blocks with eigenvalue $\lambda_i$. 
The Jordan blocks are therefore indexed by the disjoint union $\mathcal P = \bigsqcup_{i=1}^\ell S_i$.

 By Propositions~\ref{prop:cps} and~\ref{prop:gen-eigen}, we associate to every critical point of \eqref{eq:proj-optprob} a subset $\mathcal S$ of $\mathcal P$ such that $\sum_{s \in \mathcal S} s = k$ where the elements of $\mathcal S$ are the lengths of the Jordan chains in the image of $P$ in Proposition~\ref{prop:gen-eigen}. 
    We will prove that for each possible choice of $\mathcal S$, the problem \eqref{eq:proj-optprob} has either $1$ or infinitely many critical points associated to $\mathcal S$.
    In the finite case, this reduces the problem of counting the critical points to counting the possible sets $\mathcal S$.
    
\begin{theorem}\label{thm:defective}
    Let $Q \in \mathrm{Sym}^2 \CC^n$ have Jordan blocks with sizes in $\mathcal P$.
    The number of critical points of \eqref{eq:proj-optprob} is finite if and only if every set $\mathcal S \subseteq \mathcal P$ with $\sum_{s \in \mathcal S} s = k$ has the form $\mathcal S = \bigsqcup_{i \in \mathcal T} S_i$ for some $\mathcal T \subseteq [\ell]$.
    In this case, the number of critical points is equal to the coefficient of $x^k$ in the polynomial $\prod_{s \in \mathcal P} (1 + x^s).$
\end{theorem}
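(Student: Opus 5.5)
The plan is to parametrize critical points by their image subspaces and then count, within each generalized eigenspace of $Q$, the possible $Q$-invariant pieces that admit a symmetric projection. First, by Proposition~\ref{prop:cps}, the critical points are exactly the projections $P\in\mathrm{pGr}(k,n)$ commuting with $Q$. Any such $P$ preserves every generalized eigenspace $E_i=\ker(Q-\lambda_i)^n$. These spaces are mutually orthogonal with respect to the bilinear form $v^\top w$, because $Q$ is symmetric. Each $E_i$ is nondegenerate, since its dimension equals the algebraic multiplicity and $\CC^n=\bigoplus E_i$ orthogonally. So $P$ decomposes as $P=\sum_i P_i$, where $P_i$ is a symmetric projection on $E_i$ commuting with $Q|_{E_i}$. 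The problem therefore reduces to a single eigenvalue, and the critical set is the product over $i$ of the critical sets for the restricted problems on $E_i$, with ranks summing to $k$. By Proposition~\ref{prop:gen-eigen}, the image of $P_i$ is a sum of whole Jordan chains with respect to some Jordan basis. This is how the subset $\mathcal S$ arises, with block sizes $\mathcal S\cap S_i$ drawn from $E_i$.

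The key local claim concerns one eigenvalue $\lambda$ with Jordan multiset $S$ on a nondegenerate space $E$. If the chosen sub-multiset $T\subseteq S$ is $\emptyset$ or all of $S$, there is exactly one choice, namely $P_i=0$ or $P_i=\mathrm{Id}_E$. Otherwise there are infinitely many choices. To prove the second statement I would use the normal form for complex symmetric matrices (Gantmacher XI): by an orthogonal change of basis, $Q|_E$ is an orthogonal direct sum of symmetric Jordan-type blocks $\lambda\mathrm{Id}+N_s$. Fix $T\subsetneq S$ nonempty, and pick blocks $a\in T$ and $b\in S\setminus T$, so that $E\supseteq E_a\oplus E_b$. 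I would produce a one-parameter family of orthogonal matrices $g(t)$ commuting with $Q$ that mix $E_a$ and $E_b$, for example $\exp(tX)$ where $X$ is skew-symmetric and commutes with $N_a\oplus N_b$. Candidates for $X$ are the maps $E_a\to E_b$ built from powers of the nilpotent operator, together with their negative transposes. Conjugating the base projection (the identity on $E_a$, zero on $E_b$) by $g(t)$ gives distinct commuting projections. Showing that some such $X$ actually moves this projection is the main obstacle: one must check that the commutant of $N$ contains a skew element not preserving $E_a$, which I expect to verify by writing $N_s$ in the symmetric normal form and using the shift map between chains of sizes $a\le b$. Alternatively, I would dimension-count the orthogonal commutant against the stabilizer of $P$. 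For the \emph{if} direction within one eigenvalue, the only commuting projections have image $0$ or $E$ once ranks are forced. Indeed, if every admissible $\mathcal S$ is a union of full $S_i$, then the rank of $P_i$ is $0$ or $\dim E_i$, and a projection on $E_i$ with full or zero rank is $\mathrm{Id}$ or $0$.

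Finally, I would assemble the two directions. Suppose some admissible $\mathcal S$ with $\sum_{s\in\mathcal S}s=k$ splits some $S_i$ nontrivially. Then this $\mathcal S$ is realized by an actual critical point, constructed blockwise from Jordan chains of the normal form, and the local claim yields infinitely many critical points. Conversely, if every admissible $\mathcal S$ is a union of whole $S_i$, then each critical point is determined by a set $\mathcal T\subseteq[\ell]$, namely $P=\sum_{i\in\mathcal T}\mathrm{Id}_{E_i}$, and this gives finitely many. In that case the critical points correspond bijectively to the admissible sub-multisets $\mathcal S\subseteq\mathcal P$ of total size $k$, counted as subsets of the indexed disjoint union. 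These number exactly the coefficient of $x^k$ in $\prod_{s\in\mathcal P}(1+x^s)$. To see that this count is right, note that any admissible $\mathcal S$ is already a union of whole $S_i$, so distinct index sets give distinct $\mathcal T$ and distinct $P$.
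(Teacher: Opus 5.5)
Your proof is correct, and for the ``infinitely many'' direction it takes a different route from the paper. The paper works with a Jordan chain $v_1,\dots,v_r$ in $\mathrm{im}(P)$ and a $\lambda_i$-eigenvector $u\in\ker(P)$. It replaces $v_r$ by $v_r+\alpha u$, which is still a Jordan chain, so the new span is $Q$-invariant. It then asserts that the Gram matrix stays nonsingular for all but finitely many $\alpha$, so each such span carries a symmetric projection.

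You instead conjugate $P$ by $e^{tX}$, where $X$ is skew-symmetric and commutes with $Q$. This keeps $P^\top=P$, $P^2=P$, $\mathrm{trace}(P)=k$ and $PQ=QP$ automatically, so no Gram-determinant check is needed. Your orthogonal splitting $\CC^n=\bigoplus_i E_i$ also makes the finiteness and counting half transparent. The critical set becomes a product over eigenvalues, and a symmetric projection on $E_i$ of rank $0$ or $\dim E_i$ must be $0$ or $\mathrm{Id}_{E_i}$. The paper's argument stays with explicit chains and needs no group action. Yours exhibits the positive-dimensional families as orbits of the orthogonal centralizer of $Q$.

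The step you flagged as the main obstacle is routine. Let $v\in\mathrm{im}(P)$ and $w\in\ker(P)$ be $\lambda_i$-eigenvectors; they exist because $\mathcal S$ splits $S_i$, so both spaces contain a $\lambda_i$-Jordan chain. Set $X=wv^\top-vw^\top$. It is skew-symmetric, and because $Q$ is symmetric it satisfies $QX=XQ=\lambda_iX$. Moreover $[X,P]=XP-PX=wv^\top+vw^\top\neq 0$, since $v$ and $w$ are linearly independent. Hence $t\mapsto e^{tX}Pe^{-tX}$ is a nonconstant curve of critical points. In your normal-form language, any nonzero $Y$ with $YQ_a=Q_bY$ gives $Q_aY^\top=Y^\top Q_b$ by transposing, so $\begin{bmatrix}0&-Y^\top\\ Y&0\end{bmatrix}$ is a skew element of the commutant that does not preserve $E_a$.
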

\begin{proof}
    Suppose $\mathcal S \subseteq \mathcal P$ satisfies $\sum_{s \in \mathcal S} s = k$.
    We first argue that there is at least one critical point whose image contains Jordan chains of $Q$ with lengths in $\mathcal S$.
    Indeed, by \cite[Corollary XI.3.2]{gantmacher}, there exists a complex orthogonal matrix $R$ and a block diagonal matrix $B$ whose block sizes are given by $\mathcal P$ such that $Q = RBR^\top$. 
    Choose blocks with sizes in $\mathcal S$; the corresponding rows of $R$ span a $Q$-invariant subspace and since $R$ is orthogonal, there exists a symmetric projection matrix $P$ of rank $k$ onto this vector space. 
    
    If $\mathcal S = \bigsqcup_{i \in \mathcal T}S_i$ for some $\mathcal T \subseteq [\ell]$, then 
    by Proposition \ref{prop:gen-eigen}, any critical point of \eqref{eq:proj-optprob} projects onto the sum of the generalized eigenspaces for eigenvalues $\lambda_i$ with $i \in \mathcal T$.
    This point is uniquely determined by the set $\mathcal S$. 
    If every set $\mathcal S$ with $\sum_{s \in \mathcal S}s = k$ has the form $\mathcal S = \bigsqcup_{i \in \mathcal T}S_i$, then there is one critical point for each set. 
    The number of such sets is counted by the coefficient in the statement.

    Suppose there exists an $i$ such that $ S_i\not \subseteq \mathcal S, \mathcal P\backslash \mathcal S$.
    By Proposition~\ref{prop:gen-eigen}, there is a basis $\mathcal B$ for the image of $P$ consisting of Jordan chains, so there exists a $Q$-Jordan chain $v_1, \ldots, v_{r} \in \mathcal B$ and a $Q$-eigenvector $u \in \mathrm{ker}(P)$ both with eigenvalue $\lambda_i$.
    Then $v_1, \ldots, v_{r-1} , v_{r} + \alpha \cdot u$ is a Jordan chain for any $\alpha \in \mathbb C$. 
    One verifies that the Gram matrix $G'$ of the new Jordan chain has the same rank as the Gram matrix $G$ of $v_1, \ldots, v_r$ for almost all $\alpha$. 
    Since $G$ has full rank, $G'$ does as well, and thus there exists a symmetric projection matrix onto the span of $(\mathcal B\backslash v_r) \cup (v_r + \alpha u)$.
    We conclude that there are infinitely many critical points of \eqref{eq:proj-optprob}.
\end{proof}

\begin{example}
    The table shows critical point counts of \eqref{eq:proj-optprob} for $n = 6$ when all eigenvalues of $Q$ have geometric multiplicity $1$. 
    The columns are indexed by Jordan block sizes. 
       \[
    \begin{array}{|c|ccccccccccc|}
    \toprule    
        k & 6^1 & 5^11^1 & 4^12^1 & 4^11^2 & 3^2 & 3^12^11^1 & 3^11^3 & 2^3 & 2^21^2 & 2^11^4 & 1^6 \\
        \midrule
        1 & 0 & 1 & 0 & 2 & 0 & 1 & 3 & 0 & 2 & 4 & 6\\ 
        2 & 0 & 0 & 1 & 1 & 0 & 1 & 3 & 3 & 3 & 7 & 15\\
        3 & 0 & 0 & 0 & 0 & 2 & 2 & 2 & 0 & 4 & 8 & 20\\
        \bottomrule
    \end{array}
    \]
\end{example}

\begin{proof}[Proof of Theorem \ref{thm:disc}:]
    If $Q \notin \Delta$, then $Q$ has $n$ distinct eigenvalues and so by  Theorem~\ref{thm:defective}, 
    the optimization problem \eqref{eq:proj-optprob} has $\binom n k$ isolated, reduced critical points. 
    Thus  $\Sigma \subseteq \Delta$.
    Conversely, suppose $Q \in \Delta$.
    If there is an element of $\mathcal P$ which is greater than $1$, then the coefficient of $x^k$ in Theorem~\ref{thm:defective} is less than $\binom{n}{k}$. 
    If all elements of $\mathcal P$ equal to $1$, then $P$ is diagonalizable and
    \eqref{eq:proj-optprob} has infinitely many critical points by Theorem~\ref{thm:defective}.
    Thus $\Delta \subseteq \Sigma$.
\end{proof}
We remark that the Grassmannian $\mathrm{pGr}(k,n)$ is entirely contained within the ED discriminant, since projection matrices have only two distinct eigenvalues. 
Since all points in $\mathrm{pGr}(k,n)$ are diagonalizable, Theorem~\ref{thm:defective} implies that \eqref{eq:proj-optprob} has infinitely many critical points if $Q \in \mathrm{pGr}(k,n)$. 
We describe these in detail in the next section.

\section{Cut Locus and GD Degree}\label{sec:gd-degree}

The aim of this section is to describe the critical points of \eqref{eq:proj-optprob} when the data matrix $Q$ is generic in the Grassmannian $\mathrm{pGr}(k,n)$.
This problem will be important for our definition of the GD degree. 
Suppose that $X$ is a critical point in Pl\"ucker coordinates of \eqref{eq:proj-optprob}. 
This implies that the augmented Jacobian 
\begin{align*}
    \mathcal J_{\mathrm{Gr}(k,n)}^Q(X) = 
    \begin{bmatrix}
        \frac{\partial \mathrm{trace}(XX^\top Q)}{\partial x_I}\\
        x_I\\
        \mathcal J_{\mathrm{Gr}(k,n)}
    \end{bmatrix}
\end{align*}
drops rank. 
Let $\mathcal M \subseteq \mathrm{pGr}(k,n)$ be a model.
If $\mathcal J_{\mathrm{Gr}(k,n)}^Q(X)$ is rank deficient, the matrix $\mathcal J_{\mathcal M}^Q(X)$ is rank deficient, too, regardless of what $\mathcal M$ is. 
If, in addition, $X \in \mathcal M$, then $X$ will be in the vanishing locus of the critical ideal \eqref{eq:critideal} of $Q$.
Critical points that arise in this way are extraneous. 
To be precise, we define the \emph{extraneous critical points} of $Q$ on $\mathcal M$ as 
\begin{equation*}
    \mathcal{E}_\mathcal M^Q = 
    \overline{\{P \in \mathcal M_{\mathrm{reg}} : P \textrm{ is critical for } \eqref{eq:proj-optprob}\}}
\end{equation*}
where $\mathcal{M}_{\mathrm{reg}}$ is the set of nonsingular points of $\mathcal{M}$.
We remove these from our critical point set, because they are independent of the model. 
The next result shows that we can remove these extraneous critical points by removing the \emph{algebraic cut locus} of $Q$:
\begin{equation} \label{eq:cut}
    \mathrm{cut}(Q) := \{P \in \mathrm{pGr}(k,n): \mathrm{rank}(QP) < k\} = 
    \{P \in \mathrm{pGr}(k,n) : \mathrm{ker}(Q) \cap \mathrm{im}(P) \neq 0\}.
\end{equation}
We observe that $P \in \mathrm{cut}(Q)$ if and only if $Q \in \mathrm{cut}(P)$.
Note that if $X$ and $Y$ are the cocircuit matrices of $P$ and $Q$, then the algebraic cut locus may be identified with the vanishing set of ${\rm trace}(XY^\top)$; that is the definition given in \cite{FRS}.
\begin{lemma}
    For generic $Q   \in \mathrm{pGr}(k,n)$, the extraneous critical points $\mathcal{E}_\mathcal M^Q $ are contained in the algebraic cut locus of $Q$. 
\end{lemma}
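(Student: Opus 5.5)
The plan is to describe exactly which projections commute with a generic data point $Q\in\mathrm{pGr}(k,n)$, and then show that every such projection, apart from $Q$ itself, lies in the closed set $\mathrm{cut}(Q)$. Two facts drive this: by Proposition~\ref{prop:cps}, a point $P\in\mathcal M_{\mathrm{reg}}$ is critical for \eqref{eq:proj-optprob} exactly when $PQ=QP$, i.e.\ when $\mathrm{im}(P)$ is $Q$-invariant; and since $Q$ is itself a rank $k$ symmetric projection, it is diagonalizable with only the eigenvalues $1$ and $0$, so $\CC^n=\mathrm{im}(Q)\oplus\ker(Q)$.

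\textbf{Step 1 (splitting the image).} A $Q$-invariant subspace of a diagonalizable operator is the direct sum of its intersections with the eigenspaces. Concretely, for $v\in\mathrm{im}(P)$ we have $Qv\in\mathrm{im}(P)$ and $v-Qv\in\mathrm{im}(P)$. Hence
\[
\mathrm{im}(P)=\bigl(\mathrm{im}(P)\cap\mathrm{im}(Q)\bigr)\oplus\bigl(\mathrm{im}(P)\cap\ker(Q)\bigr).
\]

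\textbf{Step 2 (case split).} There are two cases.
\begin{enumerate}
\item If $\mathrm{im}(P)\cap\ker(Q)\neq 0$, then $P\in\mathrm{cut}(Q)$ by the second description in \eqref{eq:cut}.
\item Otherwise $\mathrm{im}(P)\subseteq\mathrm{im}(Q)$. Both spaces have dimension $k$, so $\mathrm{im}(P)=\mathrm{im}(Q)$. A symmetric projection is determined by its image, via $P=A^\top(AA^\top)^{-1}A$ for any $A$ whose rows span the image. Therefore $P=Q$.
\end{enumerate}
This gives
\[
\{P\in\mathcal M_{\mathrm{reg}} : PQ=QP\}\subseteq \mathrm{cut}(Q)\cup\{Q\}.
\]

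\textbf{Step 3 (closure and genericity).} The set $\mathrm{cut}(Q)=\{\mathrm{rank}(QP)<k\}$ is Zariski closed, being cut out by the $k\times k$ minors of $QP$. The point $\{Q\}$ is also closed. Taking closures therefore gives $\mathcal E^Q_{\mathcal M}\subseteq\bigl(\mathrm{cut}(Q)\cup\{Q\}\bigr)\cap\mathcal M$.

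It remains to remove the point $Q$. Note that $Q\notin\mathrm{cut}(Q)$, since $\mathrm{rank}(QQ)=k$, so this cannot be absorbed into the cut locus. This is where genericity enters, and it is the one delicate point.
\begin{enumerate}
\item If $\mathcal M\subsetneq\mathrm{pGr}(k,n)$, then since $\mathcal M$ is an irreducible proper closed subvariety and $\mathrm{pGr}(k,n)$ is irreducible, a generic $Q$ does not lie in $\mathcal M$. Hence $\mathcal E^Q_{\mathcal M}\subseteq\mathrm{cut}(Q)$.
\item If $\mathcal M=\mathrm{pGr}(k,n)$, then $Q$ is an isolated genuine critical point (the global optimum, at distance $0$) rather than an extraneous one. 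I would remark that in this case the statement should be read with $Q$ excluded, or equivalently for proper models, since only proper models are the concern of the GD degree.
\end{enumerate}

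The main obstacle is this handling of the point $Q$; everything else follows from the eigenspace decomposition in Step 1.
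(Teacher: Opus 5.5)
Your proposal is correct and follows the paper's argument. Critical points project onto $Q$-invariant subspaces. Genericity gives $Q\notin\mathcal M$, hence $P\neq Q$. Then $\mathrm{rank}(Q)=k$ forces $\mathrm{im}(P)\cap\ker(Q)\neq 0$. Your eigenspace splitting and closure step only spell out what the paper compresses into a line.

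Your caveat about $\mathcal M=\mathrm{pGr}(k,n)$ is also well taken. The paper's step ``$Q$ generic, so $Q\notin\mathcal E^Q_{\mathcal M}\subseteq\mathcal M$'' tacitly assumes $\mathcal M$ is a proper subvariety. For $\mathcal M=\mathrm{pGr}(k,n)$, the point $Q$ itself lies in $\mathcal E^Q_{\mathcal M}$ but not in $\mathrm{cut}(Q)$.
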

\begin{proof}
    If $P \in \mathcal E_{\mathcal M}^Q$ is critical for \eqref{eq:proj-optprob}, then $P$ projects onto a $Q$-invariant subspace. Since $Q$ is generic, $Q$ is not contained in $\mathcal E_{\mathcal M}^Q \subseteq \mathcal M$, and hence $P \neq Q$.
    Because $Q$ has rank $k$, $P$ must have a kernel vector of $Q$ in its image. Thus $P \in \mathrm{cut}(Q)$.
\end{proof}
For sufficiently general models, $\mathcal E_{\mathcal M}^Q = \mathcal M \cap \mathrm{cut}(Q)$.  
In this case, we call the model $\mathcal{M}$ \emph{GD-general} \cite{FRS}.
We now define the Grassmann distance degree of a model $\mathcal M \subseteq \mathrm{pGr}(k,n)$. 
\begin{defprop}\label{def:GD}
    Let $\mathcal M$ be an irreducible subvariety of $\mathrm{pGr}(k,n)$ with Zariski dense real points. 
    For generic $Q \in \mathrm{pGr}(k,n)$, the critical set
    \begin{equation}\label{eq:crit}
        \overline{
        \{P \in \mathcal M_{\mathrm{reg}} \backslash \mathrm{cut}(Q) : 
        \mathrm{rank}\, \mathcal J_{\mathcal M}^Q \leq \mathrm{codim}(\mathcal M)+ 1 
        \}}
    \end{equation}
    is either infinite or has a fixed size which does not depend on $Q$.
    The \emph{Grassmann distance (GD) degree} of $\mathcal M$ is the size of the set \eqref{eq:crit}. 
\end{defprop}
To prove that the GD degree is well-defined, we define the \emph{GD correspondence} of model points $P$ that are critical for data points $Q$: 
\begin{equation*}
    \mathcal I_{\mathcal M} = 
     \overline{
        \{(P,Q) \in \mathcal M_{\mathrm{reg}} \backslash \mathrm{cut}(Q) \times \mathrm{pGr}(k,n): 
        \mathrm{rank}\, \mathcal J_{\mathcal M}^Q \leq \mathrm{codim}(\mathcal M)+ 1 
        \}} \subseteq \mathcal M \times \mathrm{pGr}(k,n).
\end{equation*}
Let $\pi_1: \mathcal I_\mathcal M \to \mathcal M$ and $\pi_2: \mathcal I_\mathcal M \to \mathrm{pGr}(k,n)$ denote the natural projections.
\begin{proof}
     The set \eqref{eq:crit} is $\pi_1(\pi_2^{-1}(Q))$.
    The morphism $\pi_2$ is dominant, so at least one irreducible component of $\mathcal I_\mathcal M$ maps dominantly to $\mathrm{pGr}(k,n)$. 
    For every such component, the fibers have expected dimension on a dense open set \cite[Theorem 12.4.1]{vakil}, and, if the map is generically finite, it has locally constant degree \cite[24.5.14.]{vakil}.
    The proof is completed by constructing an open set of $\mathrm{pGr}(k,n)$ on which the GD degree is constant. 
\end{proof}
The \emph{ED correspondence} \cite[Section 4]{DHOST} is defined similarly to $\mathcal I_\mathcal M$, but with $(P, Q) \in \mathcal M_{\mathrm{reg}} \times \mathrm{Sym}^2\mathbb C^n$.
The ED correspondence is a vector bundle over $\mathcal M_\mathrm{reg}$, and it is always irreducible of dimension $\binom{n+1}{2}$. 
Since the ED correspondence has the same dimension as the data space $\mathrm{Sym}^2\mathbb C^n$, the ED degree is finite. 
The GD correspondence, on the other hand, can be reducible, as we show in the next example.
However, we show that, at least for $k=2$, all components of $\mathcal I_{\mathcal M}$ that map dominantly onto $\mathrm{pGr}(k,n)$ have dimension $2(n-2) = \mathrm{dim}(\mathrm{pGr}(2,n))$. 
This implies that all models in $\mathrm{pGr}(2,n)$ have finite GD degree.

\begin{example}\label{ex:nonirred}
    Consider the model in $\mathrm{Gr}(2,4)$ parametrized by matrices
    \begin{align*}
        A = \begin{bmatrix}
            1 & 0 & t & 0\\
            0 &  1 & 0 & t 
        \end{bmatrix}.
    \end{align*}
    In Pl\"ucker coordinates, the ideal of this model is $I = \langle x_{13}, x_{24}, x_{14} + x_{23},  x_{12}x_{34} + x_{14}x_{23} \rangle$.
    We compute the incidence correspondence as detailed above using Pl\"ucker coordinates $x$ for the model matrix and $y$ for the data matrix. The ideal of the GD correspondence in the ring $\mathbb R[x,y]/(I + \langle y_{12}y_{34} - y_{14}y_{23}+ y_{14}y_{23} \rangle )$ has prime decomposition
    \begin{equation*}
        \langle 
         y_{12} + y_{34} \rangle 
        \cap 
        \big 
         \langle 
         \det\begin{bmatrix}
         x_{12} - x_{34} & x_{14} - x_{23}\\
         y_{12} - y_{34} & y_{14} - y_{23}\\
         \end{bmatrix}
         \big \rangle. 
    \end{equation*}
    Note that the component defined by $y_{12} + y_{34}$ does not project dominantly onto $\mathrm{Gr}(2,n)$. 
\end{example}
\noindent
We now show that every model in $\mathrm{pGr}(2,n)$ has finite GD degree.
\begin{proof}[Proof of Theorem~\ref{thm:gd-finite}]
    Assume for contradiction that the generic fiber of $\pi_2 :\mathcal{I}_{\mathcal{M}} \rightarrow {\rm pGr}(2,n)$ is positive dimensional. Then there exists an irreducible component $\mathcal C$ of $\mathcal{I}_{\mathcal{M}}$ such that   $\pi_2|_{\mathcal C}$  is dominant and ${\rm dim}(\mathcal C)>{\rm dim}({\rm pGr}(2,n))=2(n-2)=:d$. 
    Define $F: \mathcal M \times \mathrm{pGr}(k,n) \to \mathbb C$ by 
    $F(P,Q)={\rm trace}(PQ)$ and denote 
    \begin{equation*}
    \psi = (\pi_2 \times F)|_{\mathcal C}:\mathcal C \rightarrow {\rm pGr}(2,n)\times \mathbb C, \quad \quad  \psi(P,Q)=(Q,F(P,Q)).
    \end{equation*}
    
    We now show that $\psi$ has positive dimensional fibers.
    At a smooth point $(P,Q)\in\mathcal C$ we have ${\rm trace}(UQ)=0$ for every $U \in T_P\mathcal M$.
    By the product rule the differential of $F$ is \[
    d_{(P,Q)}F(U,V) = {\rm trace}(UQ) + {\rm trace}(PV) = {\rm trace}(PV) \,,
    \]
    and hence $d_{(P,Q)}\psi(U,V)=(V, {\rm trace}(PV))$ for every $(U,V)\in T_{(P,Q)}\mathcal C$. 
    For fixed $P$, the linear map $ d \pi_2(T_{(P,Q)}\mathcal C) \to T_Q\mathrm{pGr}(k,n) \times \mathbb C ,V \mapsto (V,{\rm trace}(PV))$ is injective and ${\rm rank}(d_{(P,Q)}\psi) = {\rm rank}(d_{(P,Q)}\pi_2|_{\mathcal C})=d$. 
    Let $\mathcal H:=\overline{\psi(\mathcal C)} \subseteq \mathrm{pGr}(k,n) \times \mathbb C$. By generic smoothness of algebraic morphisms \cite[Theorem 21.6.4]{vakil}, ${\rm dim}(\mathcal H)=d$ and, since $\dim(\mathcal C) > d$ by assumption, the generic fiber of $\psi$ is positive dimensional.

    Since $\mathrm{rank}(d_{(P,Q)}\psi) = d$ at generic points in $(P,Q) \in \mathcal C$, the differential 
    \begin{align*}d_{(P,Q)}\psi: T_{(P,Q)}\mathcal C \to T_{(Q,t)} \mathcal H, &&
    (U, V) \mapsto (V, \mathrm{trace}(PV))
    \end{align*}
    is surjective.
    Thus $T_{(Q,t)}\mathcal H = 
        \{(V, \mathrm{trace}(PV)) : V \in T_{Q}\mathrm{pGr}(2,n)\}$ 
        for any $P$ satisfying 
    \begin{align}\label{eq:Pcond}
         (P, Q) \in \mathcal C, && \mathrm{rank}(d_{(P,Q)}\psi) = d && \textrm{and } && F(P,Q) = t.
    \end{align}

    Thus the maps $\ell^P_{Q}: T_{Q}\mathrm{pGr}(2,n) \to \mathbb C$ defined by $V \mapsto \mathrm{trace}(PV)$ 
    are equal for all $P$ satisfying \eqref{eq:Pcond}. 
    We write $\ell_Q$ for this map.

    Fix generic $(Q,t)\in \mathcal H$. 
    Since $\mathrm{O}(n)$ acts transitively on the projection Grassmannian, 
     we can, without loss of generality, assume that $Q =\mathrm{Id}_2 \oplus 0_{n-2}$. 
    Since $P \notin {\rm cut}(Q)$, ${\rm im}(P)\cap {\rm ker}(Q)=0$ and there exists a unique matrix $A \in \mathbb C^{(n-2)\times 2}$ such that ${\rm im}(P)$ is the graph of $A$: ${\rm im}(P)=\big \{\begin{bmatrix}
        u \\ Au
    \end{bmatrix} : u \in \mathbb C^2\big \} = \mathrm{im}(Z_A)$ where $Z_A\!:=\!\begin{bmatrix}
       \mathrm{Id}_2 \\ A
    \end{bmatrix} \!\!\in\! \mathbb C^{n\times 2}$; see \cite[p.58]{MS74}.  
    We show that the matrix $Z_A^\top Z_A \!=\! \mathrm{Id}_2+A^\top A \in \mathbb C^{2\times 2}$ is invertible.
    Let $c \in \mathbb C^2$ be such that 
    $Z_A^\top(Z_A c)\!=\!0$.
    Then $Z_A c \in {\rm ker}(P)\cap {\rm im}(P)= 0$.
    By construction $(Z_A c)^\top = \begin{bmatrix}
        c^\top & (A c)^\top
    \end{bmatrix}$, hence $c=0$ and~$\mathrm{Id}_2+A^\top A$ is invertible. 
    Setting $S:=(\mathrm{Id}_2+A^\top A)^{-1}$, we obtain the expression
    $P= Z_A(Z_A^\top Z_A)^{-1}Z_A^\top = \begin{bmatrix}
        S & SA^\top \\
        AS & ASA^\top
    \end{bmatrix}.$
    Let $T_Q{\rm pGr}(2,n)=\bigg \{\begin{bmatrix}
        0 & H^\top \\ H & 0
    \end{bmatrix} \ | \ H \in \mathbb C^{(n-2)\times 2}\bigg \}$ be the tangent space and $Q_H \in T_Q{\rm pGr}(2,n)$. 
    Then $\ell_Q(Q_H) = {\rm trace}(PQ_H) =2{\rm trace}(H^\top B)$ for $B:=AS$. Since $\ell_Q$ does not depend on $P$ and $\mathrm{trace}(-,-)$ is non-degenerate, $(Q,t)$ completely determines $B$.
    The relation $S - S^2 = B^\top B$, which holds by construction, has infinitely many solutions. Indeed, 
    every solution $S$ uniquely determines a point $P$ in the fiber via $A=BS^{-1}$.

    We now prove that if $(Q,t)$ has a positive dimensional fiber, i.e., is generic in the sense above, then $t = F(P,Q)={\rm trace}(PQ)={\rm trace}(S) = 1$.
    Let 
    $\delta = {\rm det}(S)$. By Cayley-Hamilton $S^2-tS+\delta \mathrm{Id}_2=0$, and thus, $B^\top B = (1-t)S +\delta \mathrm{Id}_2$. 
    Assume for contradiction that $t \neq 1$. 
    Then $S=\frac{1}{1-t}(B^\top B-\delta \mathrm{Id}_2)$ and $\delta = ({\rm trace}(B^\top B)-t+t^2)/2$.
    Since $B$ is fixed for $(Q,t)$, the matrix $S$ is determined by $t$.
    Thus the fiber $(Q,t)$ has only a single point, a contradiction.
    We conclude that if $(Q,t)$ has a positive dimensional fiber, then $t = 1$.
    It follows that $F(P,Q) = 1$ on a dense open subset of $\mathcal C$, and since $F$ is continuous, $F$ is constant on $\mathcal C$.

    As above, let $Q = \mathrm{Id}_2 \oplus 0_{n-2}$ and let $P$ be such that $(P,Q)\in \mathcal C$.
    Since $F$ is constant on $\mathcal C$,
    ${\rm trace}(PQ_H)=0$ for every $Q_H\in T_Q{\rm pGr}(2,n)$ which implies $B=0$ and $S^2-S=0$. 
    Since $S$ is invertible, it must be $S=\mathrm{Id}_2$ and $P=Q$, in which case $F(P,Q)=2$, a contradiction. Hence, $\mathrm{dim}(\mathcal C) = d$ and the fibers of $\pi_2$ are generically finite. 
\end{proof}

Low dimensional models may have no extraneous critical points, i.e., $\mathcal E_{\mathcal M}^Q = \emptyset$. If, in addition, the model $\mathcal M$ is GD-general, then Definition~\ref{def:GD} coincides with the definition of the ED degree. 
Thus, the set $\mathcal E_{\mathcal M}^Q$ tells us whether the ED and GD degrees of $\mathcal M$ are equal. 

\begin{theorem}\label{thm:EDGD}
    The ED and GD degrees of a GD-general $\mathcal M$ agree if and only if $\mathcal E_{\mathcal M}^Q$ is empty for generic $Q$. 
    The set $\mathcal E_{\mathcal M}^Q$ is expected to be empty if 
    \begin{align*}
        \dim \mathcal M <
        \begin{cases}
        k(n-k) - \lfloor n^2/8 \rfloor &\textrm{if } n \leq 4k\\
        k^2 &\textrm{if } n \geq 4k.
        \end{cases}
    \end{align*}
\end{theorem}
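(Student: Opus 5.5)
The plan is to prove the two assertions separately: first the equivalence, by degenerating the ED correspondence onto data in $\mathrm{pGr}(k,n)$, and then the dimension bound, by a transversality count for $\mathcal M$ against the critical locus of a projection.

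\textbf{The equivalence.} Let $\mathcal E_{\mathcal M}^Q$ be as defined above. Since $\mathcal M$ is GD-general, $\mathcal E_{\mathcal M}^Q = \mathcal M\cap \mathrm{cut}(Q)$. So, for generic $Q\in\mathrm{pGr}(k,n)$, the set \eqref{eq:crit} is exactly the set of ED-critical points of $Q$ on $\mathcal M_{\mathrm{reg}}$ with $\mathcal E_{\mathcal M}^Q$ removed.

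I will use the ED correspondence $\mathcal E\subseteq \mathcal M\times \mathrm{Sym}^2\CC^n$. By \cite{DHOST} it is irreducible of dimension $\binom{n+1}{2}$, and its projection to the data space has degree equal to the ED degree. I restrict it over $\mathrm{pGr}(k,n)\subset \mathrm{Sym}^2\CC^n$ and decompose the fiber over a generic $Q$ into two parts: the isolated points off $\mathcal E_{\mathcal M}^Q$, and $\mathcal E_{\mathcal M}^Q$ itself.

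\emph{If $\mathcal E_{\mathcal M}^Q=\emptyset$.} The fiber over $Q$ is then finite. I will argue that no critical point escapes to infinity or into $\mathrm{Sing}(\mathcal M)$ for generic $Q\in\mathrm{pGr}(k,n)$, using the same properness argument that \cite{DHOST} uses for the ED degree. Conservation of number then shows the fiber has exactly $\mathrm{EDdeg}(\mathcal M)$ points. Each of these is reduced, by generic reducedness along the dominant component of $\mathcal I_{\mathcal M}$. Hence the GD degree equals the ED degree.

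\emph{If $\mathcal E_{\mathcal M}^Q\neq\emptyset$.} Each point of $\mathcal E_{\mathcal M}^Q$ is a regular point of $\mathcal M$ critical for $Q$, so it lies in the fiber of the vector bundle $\mathcal E\to\mathcal M_{\mathrm{reg}}$. Every point of a fiber over $\mathrm{pGr}(k,n)$ is a limit of points in fibers over generic data, since the generic fibers are dense in $\mathcal E$. Hence at least one of the ED critical points of nearby generic data converges into $\mathcal E_{\mathcal M}^Q$. That point is discarded in \eqref{eq:crit}, so the GD degree is strictly smaller than the ED degree.

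\textbf{The dimension bound.} Write $\mathrm{Crit}(Q)$ for the critical set of \eqref{eq:proj-optprob} with the point $Q$ itself removed, so that $\mathcal E_{\mathcal M}^Q\subseteq \mathcal M\cap \mathrm{Crit}(Q)$. By Remark~\ref{rem:O(n)-equivariant}, a generic $Q$ equals $gQ_0g^\top$ with $g\in\mathrm{O}(n)$ generic and $Q_0=\mathrm{Id}_k\oplus 0_{n-k}$, so $\mathrm{Crit}(Q)=g\,\mathrm{Crit}(Q_0)g^\top$. The Kleiman transversality theorem on the homogeneous space $\mathrm{pGr}(k,n)$ then gives that $\mathcal M\cap \mathrm{Crit}(Q)$ is empty whenever
\[
\dim\mathcal M+\dim \mathrm{Crit}(Q_0)<k(n-k).
\]
So the task reduces to computing $\dim\mathrm{Crit}(Q_0)$.

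By Proposition~\ref{prop:cps}, a critical point $P$ is a projection onto $U_1\oplus U_2$ with $U_1\subseteq \mathrm{im}\,Q_0$ and $U_2\subseteq\ker Q_0$. Put $b=\dim U_2$, with $1\le b\le\min(k,n-k)$. The component indexed by $b$ is an open subset of $\mathrm{Gr}(k-b,k)\times\mathrm{Gr}(b,n-k)$, namely the non-isotropic subspaces, and has dimension $b(n-k-b)$. Maximizing this over the allowed range of $b$ and subtracting from $k(n-k)$ should give the stated piecewise threshold.

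I expect this optimization to be the main obstacle. The naive maximization has its breakpoint at $n=3k$ and produces $(n-k)^2/4$ rather than $\lfloor n^2/8\rfloor$. I would therefore first recheck the parametrization of $\mathrm{Crit}(Q_0)$, including whether the symmetry $P\leftrightarrow \mathrm{Id}-P$ or the cut-locus condition changes which $b$ occur. If that does not resolve the discrepancy, I would run the integer maximization of $b(n-k-b)$ carefully in each range of $n$ relative to $k$ and reconcile the result with the stated cases.
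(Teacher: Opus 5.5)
\textbf{The dimension bound.} Your strategy for the second claim is the paper's: stratify the critical set of \eqref{eq:proj-optprob} by $b=\dim(\mathrm{im}\,P\cap\ker Q)$ and compare dimensions. Your appeal to Kleiman's theorem for the transitive $\mathrm{O}(n)$-action is a nice touch, since it upgrades ``expected to be empty'' to ``empty for generic $Q$''.

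However, the dimension count has a concrete error, and it is exactly the source of the discrepancy you noticed. You dropped the first factor. The stratum indexed by $b$ is open in $\mathrm{Gr}(k-b,k)\times\mathrm{Gr}(b,n-k)$, which has dimension $b(k-b)+b(n-k-b)=b(n-2b)$, not $b(n-k-b)$. This agrees with the paper. There the stratum is the transverse intersection $\Omega(\ker Q,b)\cap\Omega(\mathrm{im}\,Q,k-b)$ of Schubert varieties of codimensions $b^2$ and $(k-b)(n-k-b)$, so its codimension is $k(n-k)-b(n-2b)$.

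Now maximize $b(n-2b)$ over integers $0\le b\le k$. Here $k\le n/2$ is tacit in the statement; otherwise replace $k$ by $n-k$.
\begin{itemize}
\item The vertex is at $b=n/4$. Checking $n\equiv 0,1,2,3\pmod 4$ shows the integer maximum is $\lfloor n^2/8\rfloor$, attained at $\lfloor n/4\rfloor$ or $\lceil n/4\rceil$. Both are at most $k$ when $n\le 4k$.
\item For $n\ge 4k$ the function is increasing on $[0,k]$, so the maximum is $k(n-2k)$ and the codimension is $k^2$.
\end{itemize}
This is precisely the stated threshold. There is no need to revisit the symmetry $P\leftrightarrow\mathrm{Id}-P$ or the cut locus.

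\textbf{The equivalence.} The paper gives no degeneration argument here. It reads the equivalence off Definition~\ref{def:GD}, since the GD count is the critical count for data in $\mathrm{pGr}(k,n)$ with $\mathcal E^Q_{\mathcal M}$ discarded. If you want a genuine argument instead, the direction $\mathcal E^Q_{\mathcal M}=\emptyset\Rightarrow$ GD $=$ ED is not established by what you wrote, for three reasons.
\begin{itemize}
\item Finiteness of the fiber is not automatic. The paper proves finiteness of GD degrees only for $k=2$.
\item The properness argument of \cite{DHOST} concerns generic data in $\mathrm{Sym}^2\CC^n$. Data in $\mathrm{pGr}(k,n)$ is special, so nothing you cite prevents critical points from colliding, escaping to $\{\mathrm{trace}(XX^\top)=0\}$, or running into $\mathrm{Sing}(\mathcal M)$ as the data specializes.
\item Generic reducedness of $\pi_2:\mathcal I_{\mathcal M}\to\mathrm{pGr}(k,n)$ does not give multiplicity one in the fiber of the ED correspondence, which is what conservation of number counts.
\end{itemize}
The other direction is sound once made precise. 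After compactifying the ED correspondence, every fiber of the resulting proper, generically finite map of degree $\mathrm{EDdeg}(\mathcal M)$ has at most $\mathrm{EDdeg}(\mathcal M)$ connected components. The isolated GD points each form their own component, and the nonempty set $\mathcal E^Q_{\mathcal M}\subseteq\mathrm{cut}(Q)$ is disjoint from them, which forces a strict inequality.
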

\begin{proof}
    The first claim follows from the definition of GD degree;
    the second is an immediate consequence of the following result about the dimension of the critical point set of~\eqref{eq:proj-optprob}.
\end{proof}

We will describe the critical points of \eqref{eq:proj-optprob} using Schubert varieties.
Fix $Q \in \mathrm{pGr}(k,n)$. 
For a subspace $W \subset \CC^n$ and $j \leq \min\{{\rm dim}(W),k\}$, we denote by $\Omega(W,j)$ the Schubert variety $\Omega(W,j) := \{P \in {\rm pGr}(k,n) \ | \ {\rm dim}(\mathrm{im}\,P\cap W)\geq j\}$.
For $Q \in \mathrm{pGr}(k,n)$, we write
\[\Omega_i(Q)  :=  \Omega(\mathrm{ker}\,Q,i) \,\cap \,\Omega(\mathrm{im}\,Q,k-i).\]

\begin{lemma}
    The critical point set of \eqref{eq:proj-optprob} is the disjoint union $\Omega(Q) = \bigsqcup_{i=0}^k \Omega_i(Q)$. 
    The codimension of $\Omega_i(Q)$ is $k(n-k) - i(n-2i)$; the codimension of $\Omega(Q)$ is $k(n-k)-\lfloor \frac{n^2}{8}\rfloor$ if $n\leq 4k$, and $k^2$ if $n\geq 4k$.
\end{lemma}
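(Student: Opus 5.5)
By Proposition~\ref{prop:cps}, $P$ is critical exactly when $\mathrm{im}(P)$ is a $Q$-invariant subspace. Since $Q\in \mathrm{pGr}(k,n)$ is diagonalizable, the plan is to work with its eigenspace decomposition $\CC^n = \mathrm{im}\,Q \oplus \ker Q$, where the eigenvalues are $1$ and $0$ and the dimensions are $k$ and $n-k$. These two spaces are orthogonal with respect to the bilinear form $v^\top w$, and each is nondegenerate.

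\textbf{Step 1 (description of the critical set).} Any $Q$-invariant subspace $V$ splits as $V=(V\cap \mathrm{im}\,Q)\oplus (V\cap \ker Q)$. To see this, write $v = Qv + (v-Qv)$; both pieces lie in $V$ by invariance. Hence if $\dim(V\cap\ker Q)=i$ then $\dim(V\cap \mathrm{im}\,Q)=k-i$. Conversely, any $V$ of this form is $Q$-invariant. So the critical points with index $i$ are exactly $\Omega_i(Q)$.

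Disjointness follows because the dimensions are forced. If $\dim(V\cap\ker Q)\ge i$ and $\dim(V\cap \mathrm{im}\,Q)\ge k-i$, then equality holds in both, since the two intersections are in direct sum inside $V$. So $\Omega_i\cap\Omega_{i'}=\emptyset$ for $i\neq i'$.

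One point needs checking: a symmetric projection $P$ exists onto $V$ only when $V$ is nondegenerate. Because the splitting above is orthogonal, $V$ is nondegenerate exactly when both summands are. The set $\Omega_i(Q)$ is therefore an open dense subset of the pairs of subspaces (we take closures implicitly, as the paper does).

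\textbf{Step 2 (codimension of each $\Omega_i$).} The set $\Omega_i(Q)$ is, birationally, the product $\mathrm{Gr}(i,\ker Q)\times \mathrm{Gr}(k-i,\mathrm{im}\,Q)$. Its dimension is
\[
i(n-k-i)+(k-i)i = i(n-2i).
\]
Subtracting from $\dim \mathrm{pGr}(k,n)=k(n-k)$ gives codimension $k(n-k)-i(n-2i)$. This is nonempty only for $0\le i\le \min(k,n-k)$.

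\textbf{Step 3 (maximize over $i$).} The codimension of $\Omega(Q)$ is the minimum over $i$ of $k(n-k)-i(n-2i)$. So we maximize $g(i)=i(n-2i)$ over integers $0\le i\le \min(k,n-k)$. The function $g$ is a concave quadratic with real maximizer $n/4$.

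If $n\le 4k$, then $n/4\le k$ and also $n/4\le n-k$ whenever $k\le 3n/4$. For integer $i$ nearest $n/4$, a short case check on $n \bmod 4$ shows $\max g=\lfloor n^2/8\rfloor$. For example, $n=4m+2$ gives $i=m$ or $m+1$, and $g = m(2m+2)=2m^2+2m=\lfloor (4m+2)^2/8\rfloor$.

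If $n\ge 4k$, the constraint $i\le k\le n/4$ binds. Then $\max g=k(n-2k)$, and the codimension is $k(n-k)-k(n-2k)=k^2$.

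\textbf{Main obstacle.} The delicate part is the edge range where $\min(k,n-k)=n-k<n/4$, i.e., $k>3n/4$. Here the formula as stated would need $i=n-k$, so I would check whether the statement implicitly assumes $k\le n/2$ (using the duality $\mathrm{pGr}(k,n)\cong \mathrm{pGr}(n-k,n)$, $P\mapsto \mathrm{Id}-P$, which swaps $\ker$ and $\mathrm{im}$) and reduce to that case. The floor computation for the four residues of $n$ mod $4$ is routine.
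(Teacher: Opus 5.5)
Your proof is correct, and your Step 1 is the paper's own first step: the splitting $\mathrm{im}\,P=(\mathrm{im}\,P\cap\mathrm{im}\,Q)\oplus(\mathrm{im}\,P\cap\ker Q)$ coming from Proposition~\ref{prop:cps}.

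\textbf{Where you differ: Step 2.} The paper computes $\mathrm{codim}\,\Omega(W,j)=j(n-k+j-\dim W)$ by counting boxes in a Young diagram. It then adds the codimensions of $\Omega(\ker Q,i)$ and $\Omega(\mathrm{im}\,Q,k-i)$, justifying this only by asserting transversality because $\ker Q\cap\mathrm{im}\,Q=0$.

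You instead identify $\Omega_i(Q)$, via $P\mapsto(\mathrm{im}\,P\cap\ker Q,\ \mathrm{im}\,P\cap\mathrm{im}\,Q)$, with the dense open set of nondegenerate pairs in $\mathrm{Gr}(i,\ker Q)\times\mathrm{Gr}(k-i,\mathrm{im}\,Q)$. This buys three things:
\begin{itemize}
\item it gives $\dim\Omega_i(Q)=i(n-2i)$ directly;
\item it sidesteps the transversality claim entirely;
\item it shows at once that $\Omega_i(Q)$ is irreducible and nonempty exactly for $0\le i\le\min(k,n-k)$.
\end{itemize}
Add one sentence for nonemptiness: $\ker Q$ and $\mathrm{im}\,Q$ are nondegenerate, so they contain nondegenerate subspaces of every dimension. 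The paper's route stays inside the Schubert-calculus language it uses to define $\Omega(W,j)$; yours is more elementary and self-contained. Your Step 3 also supplies the integer optimization that the paper compresses into ``taking the minimum.''

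\textbf{The edge range.} Your worry is justified, and it is a defect of the statement, not of your argument. The paper minimizes over $i=0,\dots,k$, silently including the empty strata with $i>n-k$. For $k>3n/4$ the true value is $\min_{0\le i\le n-k}\bigl(k(n-k)-i(n-2i)\bigr)=(n-k)^2$, attained at $i=n-k$. The statement instead gives $k(n-k)-\lfloor n^2/8\rfloor$.

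A counterexample: take $n=12$, $k=10$. The stated value is $2$. But $\Omega_2(Q)$ is a dense open subset of $\mathrm{Gr}(8,\mathrm{im}\,Q)$, of dimension $16$, inside the $20$-dimensional $\mathrm{pGr}(10,12)$, so the codimension is $4$. For $k=n$ the stated value is even negative.

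The duality you propose cannot rescue the formula. The map $P\mapsto\mathrm{Id}-P$, $Q\mapsto\mathrm{Id}-Q$ sends $\Omega_i(Q)$ to $\Omega_i(\mathrm{Id}-Q)$ with the \emph{same} $i$ and preserves codimension. So the true answer is symmetric under $k\leftrightarrow n-k$, while the stated case split $n\le 4k$ versus $n\ge 4k$ is not. For example, $(k,n)=(2,12)$ gives $4$ but $(10,12)$ gives $2$.

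The correct fix is either to add the hypothesis $k\le 3n/4$ (e.g.\ $k\le n/2$), or to add a third case $(n-k)^2$ for $k\ge 3n/4$. Under that hypothesis your case analysis finishes the proof, once you note that the integer nearest $n/4$ lies in $[0,\min(k,n-k)]$. For $n=4m+3$ this requires $m+1\le k$ and $m+1\le n-k$, which follow from $n\le 4k$ and $k\le 3n/4$ respectively.
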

\begin{proof}
    If $P \in {\rm pGr}(k,n)$ is critical for \eqref{eq:proj-optprob}, then, by Proposition \ref{prop:cps},
      \[\mathrm{im}\,P = (\mathrm{im}\,P \cap \mathrm{im}\,Q) \oplus (\mathrm{im}\,P \cap \mathrm{ker}\,Q).\]  
    The description of the critical points in the statement accounts for all different possible dimensions of the two intersections on the righthand side.
    Since $\mathrm{im}\,Q \cap \mathrm{ker}\,Q =0$, the intersection $\Omega(\mathrm{ker}\,Q,i) \cap \Omega(\mathrm{im}\,Q,k-i)$ is transverse.
    Counting boxes in a Young diagram, we find that if ${\rm dim}(W)=\ell$, then ${\rm codim}(\Omega(W,j)) = j(n-k+j-\ell)$ in ${\rm pGr}(k,n)$.
    Thus ${\rm codim}(\Omega_i(Q)) = {\rm codim}(\Omega(\mathrm{ker}\,Q,i)) + {\rm codim}(\Omega(\mathrm{im}\,Q,k-i)) = k(n-k) -i(n - 2i)$. The codimension of $\Omega(Q)$ is obtained by taking the minimum of this value over $i= 0, \ldots, k$.
\end{proof}

The slogan of Theorem~\ref{thm:EDGD} is that the ED and GD degrees agree for most low-dimensional models and disagree for most high-dimensional models.

\begin{example}\label{ex:twisted_cubic}
    The Chow variety $\mathcal M$ of the twisted cubic curve is cut out in $\mathrm{Gr}(2,4) \subseteq \mathbb P^5$ by the Bezout determinant: 
    \begin{equation*}
        \begin{vmatrix}
            x_{12} & x_{13} & x_{23}\\
            x_{13} & x_{14} + x_{23} & x_{24}\\
            x_{23} & x_{24} & x_{34}
        \end{vmatrix}=0.
    \end{equation*}
    This threefold has ED degree 42 and GD degree 10 by \cite[Section 5]{FRS}. 
    The cut locus of a point in $\mathrm{Gr}(2,4)$ is also a threefold and it is expected to intersect $\mathcal M$ in a surface. 
    We visualize this in Figure~\ref{fig:twisted_cubic} by slicing the affine open of $\mathrm{Gr}(2,4) \subseteq \mathbb P^5$ where $x_{12} + x_{13} + x_{14} + x_{23} + x_{24} + x_{34} = 1$ with a $3$-dimensional affine linear space. 
    The Chow variety $\mathcal M$ is shown in orange and the cut locus of the magenta data point is shown in red. 
    Two of the 10 critical points of \eqref{eq:proj-optprob} are shown in green on the orange curve, one in each panel. 
    Finally, the yellow cross marks one of the extraneous critical points in the intersection of the cut locus.
\end{example}
\begin{figure}
    \centering
    \includegraphics[width=0.45\linewidth]{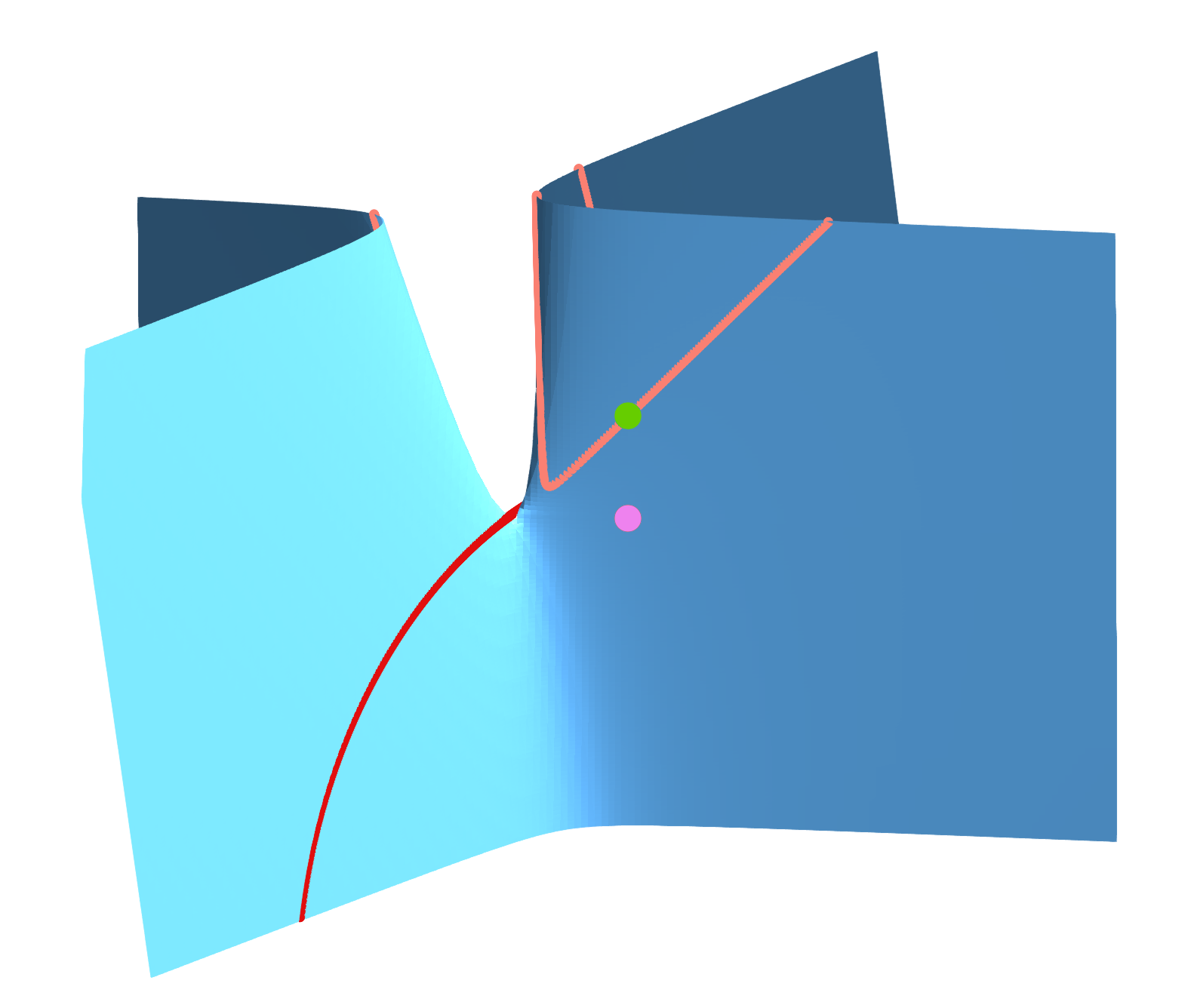}
    \includegraphics[width=0.45\linewidth]{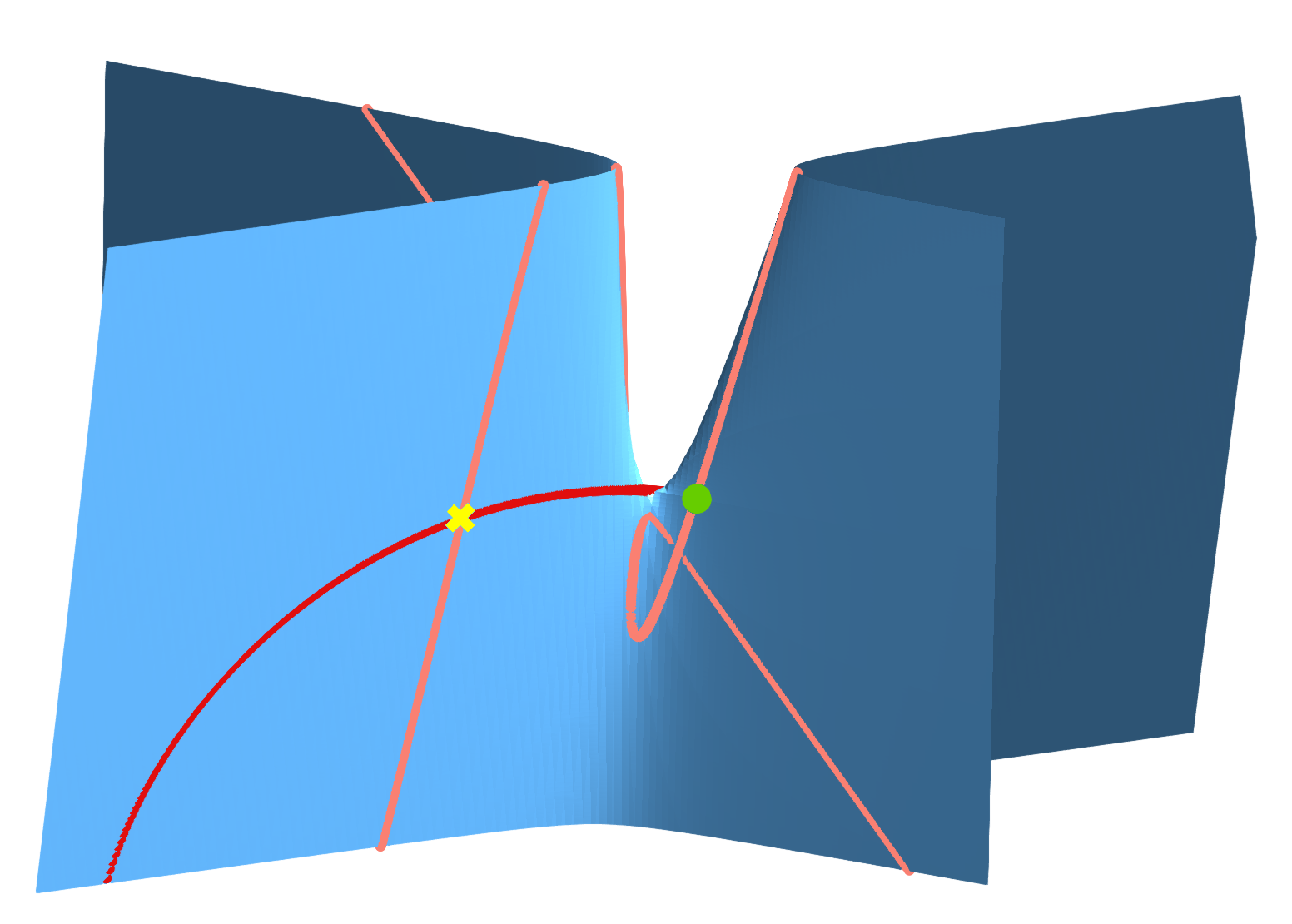}
    \caption{The \textcolor{wine}{cut locus} of a \textcolor{mag}{data point} along with true \textcolor{gre}{critical points} and an \textcolor{yel}{extraneous critical point} for \eqref{eq:trace-opt} on the \textcolor{salmon}{Chow variety of the twisted cubic} in Example~\ref{ex:twisted_cubic}.}
    \label{fig:twisted_cubic}
\end{figure}

\section{Products of Grassmannians}\label{sec:products}
Now we turn to the computation of the ED and GD degrees of the product of two smaller Grassmannians embedded in $\mathrm{pGr}(k,n)$. 
Our first motivation for studying this model is that, in Sections \ref{sec:positroids} and \ref{sec:schubert}, we will see that certain matroid and Schubert varieties are products of two smaller Grassmannians. The ED degree of this model is straightfoward to compute, but we will see that the GD degree is already nontrivial.

Our main goal is to prove Theorem 
\ref{thm:prod_Grassmannian}, and we will characterize the actual critical points that give rise to the respective ED and GD degrees. 
Our first results address the fundamental linear algebra of submatrices of points in the projection Grassmannian. 
For instance, the next result shows that any sufficiently large principal submatrix of a matrix in the projection Grassmannian has $1$ as an eigenvalue. 
\begin{lemma}\label{lemma: eigenvalue1}
    Each principal $r \times r$ submatrix of $Q \in \mathrm{pGr}(k,n)$ has an eigenvalue $1$ whose geometric multiplicity is at least $k + r - n$. Equality holds for generic $Q$. 
\end{lemma}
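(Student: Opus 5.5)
Let $Q\in\mathrm{pGr}(k,n)$ and let $Q_R$ denote its principal submatrix on the index set $R\subseteq[n]$, $|R|=r$. Write $E_R\in\mathbb C^{n\times r}$ for the matrix of coordinate vectors indexed by $R$, so that $Q_R=E_R^\top QE_R$, and let $U=\mathrm{span}\{e_i:i\in R\}$. The plan is to exhibit an explicit subspace of dimension at least $k+r-n$ inside $\ker(Q_R-\mathrm{Id}_r)$, and then to prove equality for a suitable $Q$ and spread it to generic $Q$ by semicontinuity.

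\emph{Lower bound.} Consider the subspace $\mathrm{im}(Q)\cap U$. Since $\dim\mathrm{im}(Q)=k$ and $\dim U=r$, we have $\dim(\mathrm{im}(Q)\cap U)\ge k+r-n$. Take $v=E_Ru$ in this intersection. Because $Q$ restricts to the identity on its image, $Qv=v$, and therefore
\[
Q_Ru=E_R^\top QE_Ru=E_R^\top E_Ru=u .
\]
The map $u\mapsto E_Ru$ is injective, so $\ker(Q_R-\mathrm{Id}_r)$ has dimension at least $k+r-n$. Note that we only use $Q^2=Q$ and nowhere need symmetry or orthogonality over $\CC$; this avoids any issue with isotropic vectors.

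\emph{Equality for generic $Q$.} The map $Q\mapsto \mathrm{rank}(Q_R-\mathrm{Id}_r)$ is lower semicontinuous, so the condition "geometric multiplicity $\le m$" is Zariski open on the irreducible variety $\mathrm{pGr}(k,n)$. It therefore suffices to find one $Q$ for which equality holds, after which it holds on a dense open subset. There are two cases.

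If $k+r-n\le 0$, the claim reduces to saying that $1$ is generically not an eigenvalue (or has multiplicity $0$). Choose $Q$ to be the diagonal projection onto $k$ coordinates. Place as few of those coordinates in $R$ as possible; this number is $\max(0,k+r-n)=0$. Then $Q_R$ is diagonal with entries $0,1$ and exactly $\max(0,k+r-n)$ ones.

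If $k+r-n>0$, the same diagonal choice works: $Q_R$ has exactly $k+r-n$ eigenvalues equal to $1$ and is diagonal, so the geometric multiplicity is exactly $k+r-n$. Since $\mathrm{pGr}(k,n)$ is irreducible (it is a single $\mathrm{O}(n)$-orbit by Remark~\ref{rem:O(n)-equivariant}), the open set where the multiplicity is at most $k+r-n$ is dense. Combined with the lower bound, equality holds generically.

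The only delicate point is the interpretation of the statement when $k+r-n\le 0$: there the lower bound is vacuous, and "equality" should be read as multiplicity $\max(0,k+r-n)$. The diagonal example handles this case uniformly as well.
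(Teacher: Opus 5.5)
Your proof is correct. The lower bound is the same as the paper's: both of you embed $\mathrm{im}(Q)\cap\mathrm{span}\{e_i : i\in R\}$, which has dimension at least $k+r-n$, into the $1$-eigenspace of $Q_R$, using only that $Q$ is the identity on its image.

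The two routes differ in how they justify generic equality. The paper only notes that $\dim(\mathrm{im}(Q)\cap E')=k+r-n$ for generic $Q$ and concludes. That controls the embedded subspace, not the whole eigenspace. So it tacitly needs the reverse inclusion: every $x'$ with $Q'x'=x'$ should give $(x',0)\in\mathrm{im}(Q)$. For real $Q$ this follows from $\|(\mathrm{Id}_n-Q)x\|^2=x^\top(\mathrm{Id}_n-Q)x=0$, the trick used in Lemma~\ref{lem:cut-locus-for-products}. Over $\mathbb C$ it can fail: take $n=3$, $k=r=1$ and $Q=vv^\top$ with $v=(1,1,i)$. Then $Q_{11}=1$, yet $\mathrm{im}(Q)\cap\mathrm{span}\{e_1\}=0$.

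Your argument uses upper semicontinuity of $\dim\ker(Q_R-\mathrm{Id}_r)$ together with the diagonal witness. It avoids the reverse inclusion entirely and proves generic equality for the eigenspace itself, which is the cleaner justification. The price is that you need $\mathrm{pGr}(k,n)$ to be irreducible. Saying it is ``a single $\mathrm{O}(n)$-orbit'' is not quite enough, since $\mathrm{O}(n)$ is disconnected. Add that the stabilizer $\mathrm{O}(k)\times\mathrm{O}(n-k)$ of $\mathrm{Id}_k\oplus 0_{n-k}$ contains elements of determinant $-1$. Hence the connected group $\mathrm{SO}(n)$ already acts transitively, and its image $\mathrm{pGr}(k,n)$ is irreducible.
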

\begin{proof}
    Without loss of generality, we consider $Q'$, the leading $r \times r$ principal submatrix of $Q$. Let $E' = \mathrm{span}\{e_1, \ldots, e_r\}$.  Then ${\rm dim}({\rm im}(Q)\cap E')\geq k+r-n$, with equality for generic $Q$. For $x = \begin{bmatrix} x' \\ 0 \end{bmatrix}\in {\rm im}(Q)\cap E'$ we have $Q' x' = x'$, and the statement follows. 
\end{proof}

Any $n \times n$ symmetric matrix may be written as
$Q = \begin{bmatrix} Q_{11} & Q_{12} \\
Q_{12}^\top & Q_{22}
\end{bmatrix}.$
If $Q \in \mathrm{pGr}(k,n)$, there is an important relationship between the blocks $Q_{11}, Q_{12}, Q_{22}$ of this matrix. 
The next result, inspired by the CS decomposition of an orthogonal matrix \cite{PW94}, shows that the left and right singular vectors of $Q_{12}$ are eigenvectors of $Q_{11}$ and $Q_{22}$, respectively.

\begin{proposition}\label{prop:SVD}
Assume $Q \in \mathrm{pGr}(k,n)$ with $Q_{11} \in \mathrm{Sym}^2 \mathbb C^{m}$, $Q_{22} \in \mathrm{Sym}^2 \mathbb C^{n-m}$, and $Q_{12} \in \mathbb C^{m \times (n-m)}$, as above, for $k\leq m, n/2$.
For generic $Q$, the following holds:
\begin{enumerate} 
\item If $k \leq n-m$, there exist $U \in \mathrm{O}(m)$ and $V \in \mathrm{O}(n-m)$ and diagonal $k \times k$ matrices $C, S$ with $C^2 + S^2 = \mathrm{Id}_k$ such that 
\begin{align*}
    Q = 
     \begin{bmatrix}
            U^\top & 0 \\ 0 & V^\top
        \end{bmatrix}
        \begin{bNiceArray}[margin]{cc|cc}
            C^2 & 0 &  CS & 0\\ 
            0 & 0 & 0 & 0\\
            \hline 
            CS  & 0 & S^2  & 0\\
            0 & 0 & 0 & 0
        \end{bNiceArray}
        \begin{bmatrix}
            U & 0\\ 0 & V
        \end{bmatrix}.   
\end{align*}
\item 
If $k > n-m $,  there exist  $U \in \mathrm{O}(m)$ and $V \in \mathrm{O}(n-m)$ and diagonal $(n-m) \times (n-m)$ matrices $C, S$ with $C^2 + S^2 = \mathrm{Id}_{n-m}$ such that 
\begin{align*}
    Q = 
     \begin{bmatrix}
            U^\top & 0\\ 0 & V^\top
        \end{bmatrix}
        \begin{bNiceArray}[margin]{ccc|c}
            C^2 & 0 & 0 &  CS \\
            0 & {\rm Id}_{k-(n-m)} & 0& 0 \\
            0 & 0 & 0 & 0\\
            \hline 
            CS  & 0 & 0 & S^2 \\
        \end{bNiceArray}
        \begin{bmatrix}
            U & 0 \\  0 & V
        \end{bmatrix}.   
\end{align*}
\end{enumerate}
\end{proposition}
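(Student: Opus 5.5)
The plan is to write $Q = Z Z^\top$, where $Z \in \mathbb C^{n\times k}$ has orthonormal columns ($Z^\top Z = \mathrm{Id}_k$) spanning $\mathrm{im}(Q)$. For generic $Q$ such a $Z$ exists: $\mathrm{im}(Q)$ is non-isotropic because $Q$ is a symmetric projection, so $Q$ restricted to its image is the identity and the bilinear form on $\mathrm{im}(Q)$ is nondegenerate. Split $Z = \begin{bmatrix} Z_1 \\ Z_2\end{bmatrix}$ with $Z_1 \in \mathbb C^{m\times k}$ and $Z_2\in\mathbb C^{(n-m)\times k}$. Then
\[
Q_{11}=Z_1Z_1^\top,\qquad Q_{12}=Z_1Z_2^\top,\qquad Q_{22}=Z_2Z_2^\top,\qquad Z_1^\top Z_1+Z_2^\top Z_2=\mathrm{Id}_k.
\]
The goal is a complex-orthogonal CS decomposition of $Z$: find $U\in \mathrm{O}(m)$, $V\in\mathrm{O}(n-m)$ and $W\in\mathrm{O}(k)$ with $UZ_1W = \begin{bmatrix} C\\0\end{bmatrix}$ and $VZ_2W=\begin{bmatrix} S\\0\end{bmatrix}$ (suitably padded). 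Conjugating $Q$ by $U\oplus V$ then produces exactly the displayed block form, since $W$ cancels in $ZZ^\top$.

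First, diagonalize the symmetric $k\times k$ matrix $M := Z_1^\top Z_1$ by a complex orthogonal matrix. For generic $Q$, $M$ has distinct eigenvalues, so it is diagonalizable. By \cite[Corollary XI.3.2]{gantmacher}, a diagonalizable complex symmetric matrix is orthogonally diagonalizable, giving $W\in\mathrm{O}(k)$ with $W^\top M W = C^2$ diagonal. Then $W^\top Z_2^\top Z_2W = \mathrm{Id}_k - C^2 =: S^2$, and we choose square roots $C$ and $S$ entrywise. In case 1 ($k\le n-m$), generically all $c_i, s_i\neq 0$.

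Next, the columns of $Z_1W$ are pairwise orthogonal with $(Z_1W)^\top(Z_1W)=C^2$, and $C$ is invertible generically. So $Z_1WC^{-1}$ has $k$ orthonormal columns in $\mathbb C^m$. Any orthonormal (non-isotropic) $k$-frame extends to an element of $\mathrm{O}(m)$ by Witt's theorem / Gram–Schmidt over $\mathbb C$, applied to the orthogonal complement, which is nondegenerate. This yields $U$ with $UZ_1W=\begin{bmatrix} C\\0\end{bmatrix}$, and similarly $V$ from $Z_2WS^{-1}$, using $k\le n-m$. Computing $(U\oplus V)Q(U\oplus V)^\top$ then gives the blocks $C^2$, $CS$, $S^2$ padded with zeros; the statement writes the conjugation with $U^\top$ on the left, which only amounts to renaming $U$ as $U^\top$.

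Case 2 ($k>n-m$) is handled the same way. Here $Z_2^\top Z_2$ has rank at most $n-m$, so generically exactly $k-(n-m)$ of the $s_i$ vanish and the corresponding $c_i=1$. The other $n-m$ pairs are generic, consistent with Lemma~\ref{lemma: eigenvalue1}. Orthonormalize the $n-m$ columns of $Z_2W$ with $s_i\ne0$ to obtain $V$, which is now a square orthogonal matrix. On the $Z_1$ side, the columns with $c_i\neq 0$ are all $k\le m$ columns, giving $U$ after ordering them suitably; this produces the $\mathrm{Id}_{k-(n-m)}$ block.

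The main obstacle is the genericity bookkeeping in the complex setting. Distinct eigenvalues of $Z_1^\top Z_1$ are needed for orthogonal diagonalizability, and nonvanishing of the relevant $c_i, s_i$ is needed so that the rescaled columns are non-isotropic. I would establish each of these by exhibiting one real $Q$ (for instance via the real CS decomposition \cite{PW94}) where it holds, and then use Zariski openness.
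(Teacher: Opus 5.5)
Your route differs from the paper's. The paper never factors $Q$. It orthogonally diagonalizes the $m\times m$ block $Q_{11}$, uses $Q^2=Q$ to get $Q_{12}Q_{12}^\top=Q_{11}-Q_{11}^2$, and obtains $V$ from Lemma~\ref{lem:tech} (a Witt extension argument) so that $Q_{12}=U^\top\widehat{CS}V$. It then pins down $Q_{22}$ from $Q_{12}=Q_{11}Q_{12}+Q_{12}Q_{22}$ and a rank count. Your CS decomposition of the isometry $Z$ produces all three blocks at once and needs neither Lemma~\ref{lem:tech} nor the final rank argument. It also diagonalizes the $k\times k$ Gram matrix $Z_1^\top Z_1$ instead of $Q_{11}$. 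In case 1 this is a genuine advantage: $Q_{11}$ has rank at most $k$, so for $m-k\ge 2$ it always has a repeated eigenvalue $0$, whereas $Z_1^\top Z_1$ generically has distinct eigenvalues. Your case 1 argument is complete. No renaming of $U$ is needed at the end: $(U\oplus V)Q(U\oplus V)^\top$ equal to the middle matrix is literally the displayed formula.

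The gap is in case 2, which you say is ``handled the same way.'' When $k-(n-m)\ge 2$, the matrix $M=Z_1^\top Z_1=\mathrm{Id}_k-Z_2^\top Z_2$ has eigenvalue $1$ with geometric multiplicity at least $k-(n-m)$ for every $Q$, because $\ker Z_2$ has at least that dimension. So your claim that $M$ generically has distinct eigenvalues is false there. Your genericity plan (check distinctness at one real $Q$, then use Zariski openness) therefore cannot produce $W$, and the existence of $W$ is exactly what is left unproven.

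The fix is to prove diagonalizability directly. By Sylvester's identity, the nonzero eigenvalues of $Z_2^\top Z_2$, with multiplicity, are those of $Z_2Z_2^\top=Q_{22}$. For generic $Q$, $Q_{22}$ is invertible with $n-m$ distinct eigenvalues, none equal to $1$; this is an open condition you can check at one real point. Hence the algebraic multiplicity of the eigenvalue $1$ of $M$ is exactly $k-(n-m)$, which matches its geometric multiplicity, and all other eigenvalues of $M$ are simple and nonzero. So $M$ is diagonalizable. Eigenspaces of a complex symmetric matrix for distinct eigenvalues are mutually orthogonal, so each is nondegenerate and has an orthonormal basis, which gives $W\in\mathrm{O}(k)$. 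This is the role Lemma~\ref{lemma: eigenvalue1} plays in the paper's sketch of case 2.

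You should also justify that the columns of $Z_2W$ with $s_i=0$ are zero, not merely isotropic. Such a column is orthogonal to itself and to the other $n-m$ columns, and those form an orthogonal basis of $\mathbb C^{n-m}$ with nonzero norms. Without this you do not get $VZ_2W=\begin{bmatrix} S & 0\end{bmatrix}$.
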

\begin{proof}
 We prove the case $k \leq n-m$. 
   The case $k > n-m$ follows by a nearly identical argument combined with an application of Lemma~\ref{lemma: eigenvalue1} above.
    
     For $k \times k$ diagonal matrices $C$ and $S$ fix the notation
    \begin{align*}
        \widehat{C} = \begin{bmatrix}
        C & 0 \\ 0 & 0
        \end{bmatrix} &&
        \widehat{S} = \begin{bmatrix}
        S & 0 \\ 0 & 0
        \end{bmatrix} &&
        \widehat{CS} = 
         \begin{bmatrix}
        CS & 0 \\ 0 & 0
        \end{bmatrix} 
    \end{align*}
    so that $\widehat{C}$ is $m \times m$   and $\widehat{S}$ is  $(n-m) \times (n-m)$ , 
    and $\widehat{CS}$ is $m \times (n - m)$. 
   Since $Q$ is generic, $Q_{11}$ is diagonalizable and thus orthogonally diagonalizable \cite[Theorem XI.4]{gantmacher}. So there exists $U \in \mathrm{O}(m)$ and a complex diagonal matrix $\widehat{C}^2$ such that $Q_{11} = U^\top \widehat{C}^2U$. Here, $C^2$ (and therefore $C$) is a $k\times k$ diagonal matrix with nonzero diagonal entries since the rank of $Q_{11}$ is 
   generically equal to $k$. From $Q^2 = Q$, we have $Q_{12}Q_{12}^\top = Q_{11} - Q_{11}^2 = U^\top(\widehat{C}^2 -\widehat{C}^4)U$. 
    Setting $S^2 = \mathrm{Id}_k - C^2$,  
    we have $Q_{12}Q_{12}^\top 
    = U^\top \widehat{CS} \widehat{CS}^\top U =   (U^\top \widehat{CS})(U^\top \widehat{CS})^\top$.
    Again, the genericity of $Q$ implies that $Q_{12}Q_{12}^\top$ and $CS$ have rank $k$. 
    By Lemma~\ref{lem:tech} below, there exists $V \in \mathrm{O}(n-m)$, such that $Q_{12} = U^\top \widehat{CS} V$. 
    Now using the equation $ Q_{12} = Q_{11}Q_{12} + Q_{12}Q_{22}$ and the computations above we obtain $\widehat{CS} VQ_{22}V^\top = \widehat{CS} - \widehat{C}^2\widehat{CS} =  \widehat{CS} \widehat{S}^2$. 
    By our genericity assumption, the diagonal entries of ${CS}$ are nonzero, so the upper left $k \times k$ block of $VQ_{22}V^\top$ is $S^2$, and the upper right and lower left blocks are zero. 
    Since the rank of $Q_{22}$ cannot exceed $k$, the lower right block must be zero as well. 
\end{proof}

\begin{lemma}\label{lem:tech}
    Let $A, B \in \CC^{p \times q}$ have rank $r$. 
    If $AA^\top = BB^\top$ and $AA^\top$ has rank $r$, then there exists $V \in \mathrm{O}(q)$ such that $A = BV$.
\end{lemma}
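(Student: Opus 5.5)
The plan is to build $V$ as a complex isometry of $\CC^q$, equipped with the symmetric bilinear form $\langle x,y\rangle = xy^\top$, that carries the row space of $B$ onto the row space of $A$ and sends each row of $B$ to the corresponding row of $A$. This is a Witt-extension argument over $\CC$. Write $a_1,\dots,a_p$ and $b_1,\dots,b_p$ for the rows of $A$ and $B$, and $R_A, R_B \subseteq \CC^q$ for their spans. Both spaces have dimension $r$.

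\emph{Step 1: the row spaces are nondegenerate.} The Gram matrix of the rows of $A$ is $AA^\top$, which has rank $r = \dim R_A$. Choose $r$ rows of $A$ that form a basis of $R_A$. Every row of $A$ is a combination of these, so $AA^\top$ is congruent to the Gram matrix of the basis padded with zeros. That $r\times r$ Gram matrix must therefore be invertible, so the form restricted to $R_A$ is nondegenerate. The same argument applies to $R_B$, using $BB^\top = AA^\top$.

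\emph{Step 2: define the map $\varphi\colon R_B \to R_A$ by $b_i \mapsto a_i$ and check that it is well defined.} Suppose $c^\top B = 0$. Then $c^\top BB^\top = 0$, hence $c^\top AA^\top = 0$. This says $c^\top A \in R_A$ is orthogonal to every $a_i$, so by Step 1 we get $c^\top A = 0$. Hence $\varphi$ is a well-defined linear map. The identity $AA^\top = BB^\top$ gives $\langle a_i,a_j\rangle = \langle b_i,b_j\rangle$, so $\varphi$ preserves the form. Since $\varphi$ is surjective and the dimensions agree, it is a bijective isometry.

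\emph{Step 3: extend $\varphi$ to all of $\CC^q$.} This is the main technical point, because over $\CC$ isotropic vectors must be avoided. Any nondegenerate symmetric bilinear space over $\CC$ has an orthonormal basis. To see this, pick a non-isotropic vector, which exists because a form vanishing on all $\langle x,x\rangle$ vanishes identically by polarization. Rescale it using a complex square root, pass to its orthogonal complement, which is again nondegenerate, and induct. Apply this to $R_B$ to obtain an orthonormal basis $u_1,\dots,u_r$; then $\varphi(u_1),\dots,\varphi(u_r)$ is orthonormal in $R_A$. Next, nondegeneracy gives $\CC^q = R_B \oplus R_B^\perp = R_A \oplus R_A^\perp$, and both complements are nondegenerate of dimension $q-r$. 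Choose orthonormal bases $w_1,\dots,w_{q-r}$ of $R_B^\perp$ and $w'_1,\dots,w'_{q-r}$ of $R_A^\perp$.

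Finally, let $V$ be the matrix sending the orthonormal basis $(u,w)$ to the orthonormal basis $(\varphi(u),w')$ under right multiplication $x\mapsto xV$. A linear map taking one orthonormal basis to another preserves the form, so $VV^\top = \mathrm{Id}_q$ and $V \in \mathrm{O}(q)$. By construction $b_iV = \varphi(b_i) = a_i$ for every $i$, that is, $A = BV$.
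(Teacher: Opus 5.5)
Your proof is correct and follows the paper's route: you define the isometry $b_i \mapsto a_i$ from $\mathrm{im}(B^\top)$ to $\mathrm{im}(A^\top)$, get well-definedness from the rank hypothesis, and extend it to an element of $\mathrm{O}(q)$ (your nondegeneracy argument is equivalent to the paper's observation that $\ker(A^\top)=\ker(AA^\top)=\ker(BB^\top)=\ker(B^\top)$). The only difference is that the paper cites Witt's theorem for the extension, while you prove the needed case directly using orthonormal bases of the nondegenerate row spaces and their orthogonal complements, which makes your argument self-contained.
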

\begin{proof}
    Since $\mathrm{ker}(A^\top) \subseteq \mathrm{ker}(AA^\top)$ and both matrices have the same rank, the kernels are equal.
    Similarly $\mathrm{ker}(B^\top) = \mathrm{ker}(BB^\top)$, so we conclude $\mathrm{ker}(A^\top) = \mathrm{ker}(B^\top)$. 
    We now define an isometry $T$ between $\mathrm{im}(A^\top)$ and $\mathrm{im}(B^\top)$. 
    Given a vector $v \in \mathrm{im}(B^\top)$, there exists a vector $x \in \CC^p$ such that $v = B^\top x$. 
    Define $T(v) = A^\top x$.
    This map is well-defined since $\ker(A^\top) = \ker(B^\top)$. 
    To see that $T$ is an isometry, use $AA^\top = BB^\top$: given $x,y \in \CC^p$, 
    \begin{align*}
        (B^\top x)^\top (B^\top y) = x^\top B B^\top y = x^\top AA^\top y = (A^\top x)^\top (A^\top y).
    \end{align*}
    Witt's theorem guarantees that $T$ may be extended to an isometry on $\CC^q$. 
    Such an isometry is represented by a matrix in $V^\top \in \mathrm{O}(q)$ such that $V^\top B^\top = A^\top$ and the result follows. 
\end{proof}

We now return to our Euclidean distance problem. 
Before examining products of Grassmannians, we first turn to a single Grassmannian $\mathrm{pGr}(k,r)$ with $r \leq n$  embedded in $\mathrm{pGr}(k,n)$ as the collection of projection matrices $\begin{bmatrix} P' & 0 \\ 0 & 0\end{bmatrix}$ with $P' \in \mathrm{pGr}(k,r)$. 

\begin{corollary}\label{cor:small-grass}
    The ED and GD degrees of $\mathrm{pGr}(k,r) \subseteq \mathrm{pGr}(k,n)$ are $\binom{r}{k}$ and $1$, respectively.
    For generic data $Q \in \mathrm{pGr}(k,n)$, the unique $GD$-critical point is the symmetric projection onto the image of the upper left $r \times r$ block of $Q$.
\end{corollary}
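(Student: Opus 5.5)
The plan is to reduce the problem to the single-Grassmannian problem \eqref{eq:proj-optprob} applied to a principal submatrix of $Q$.

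\emph{Reduction.} A point of the model is $P=P'\oplus 0_{n-r}$ with $P'\in\mathrm{pGr}(k,r)$. Then $\mathrm{trace}(PQ)=\mathrm{trace}(P'Q_{11})$, where $Q_{11}$ is the leading $r\times r$ principal block of $Q$. The model is smooth, being isomorphic to $\mathrm{pGr}(k,r)$. So $P$ is critical exactly when $P'$ is critical for maximizing $\mathrm{trace}(P'Q_{11})$ over $\mathrm{pGr}(k,r)$. By Proposition~\ref{prop:cps}, this holds iff $P'Q_{11}=Q_{11}P'$, i.e.\ $\mathrm{im}(P')$ is $Q_{11}$-invariant.

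\emph{ED degree.} For generic $Q\in\mathrm{Sym}^2\CC^n$, the block $Q_{11}$ is a generic symmetric $r\times r$ matrix, so it has distinct eigenvalues. Theorem~\ref{thm:disc} (or Theorem~\ref{thm:defective}) then gives exactly $\binom{r}{k}$ reduced critical points. Hence the ED degree is $\binom{r}{k}$.

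\emph{GD degree.} Take generic $Q\in\mathrm{pGr}(k,n)$ with $r\ge k$; I assume $r\le n-k$, or else use the analogous statement. Apply Proposition~\ref{prop:SVD} with $m=r$, or argue directly. Diagonalizing, $Q_{11}=U^\top(C^2\oplus 0_{r-k})U$ with $C^2$ generic diagonal with nonzero entries, possibly containing eigenvalue $1$ with multiplicity $k+r-n$ by Lemma~\ref{lemma: eigenvalue1}.

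The $Q_{11}$-invariant $k$-dimensional subspaces admitting symmetric projections are obtained by choosing $j$ eigenvectors from the nonzero eigenspaces and $k-j$ dimensions inside $\ker Q_{11}$. Here I use that $\ker Q_{11}$ is nondegenerate, since $Q_{11}$ is orthogonally diagonalizable.

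Next I must discard those $P$ lying in $\mathrm{cut}(Q)$, i.e.\ with $\mathrm{im}(P)\cap\ker Q\ne 0$. Any invariant subspace containing a vector $v\in\ker Q_{11}$ (padded by zeros) satisfies $Qv=\begin{bmatrix}0\\ Q_{12}^\top v\end{bmatrix}$. So I need to check that such $P$ have rank $QP<k$. The key identity is that $\mathrm{rank}(QP)=\mathrm{rank}(Q_{11}P')$ when restricted appropriately. More concretely, $\mathrm{rank}(QP)<k$ iff some nonzero $w\in\mathrm{im}(P')$ has $Q_{:,1..r}w=0$, which means $Q_{11}w=0$ and $Q_{12}^\top w=0$. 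This is where the block structure from Proposition~\ref{prop:SVD} is used: it gives $Q_{12}=U^\top\widehat{CS}V$. A vector in $\ker Q_{11}$ corresponds to the zero block of $\widehat{C}$, which is also killed by $\widehat{CS}$. Hence $\ker Q_{11}\subseteq\ker Q_{12}^\top$, and every critical point whose image meets $\ker Q_{11}$ lies in the cut locus.

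The remaining critical points are therefore those with $\mathrm{im}(P')$ contained in the span of the eigenvectors of the nonzero eigenvalues. This span has dimension exactly $k=\mathrm{rank}(Q_{11})$, so $\mathrm{im}(P')=\mathrm{im}(Q_{11})$. This gives a unique point, the projection onto $\mathrm{im}(Q_{11})$, which does not lie in the cut locus for generic $Q$. Thus the GD degree is $1$.

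\emph{Main obstacle.} The delicate step is the cut-locus accounting in the $r>n-k$ case, where $Q_{11}$ has eigenvalue $1$ with multiplicity $k+r-n$ (the form in case 2 of Proposition~\ref{prop:SVD}). There I will check that the eigenspace of $1$ is still contained in $\mathrm{im}(Q_{11})$, and that positive-dimensional families of invariant subspaces only arise by mixing in $\ker Q_{11}$, which the cut locus removes. I also need to confirm that the resulting point is a reduced, isolated point of \eqref{eq:crit}.
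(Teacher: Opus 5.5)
Your proposal is correct and follows essentially the same route as the paper: you reduce to maximizing $\mathrm{trace}(P'Q_{11})$ over $\mathrm{pGr}(k,r)$, put $Q$ in the canonical form of Proposition~\ref{prop:SVD}, and discard every $Q_{11}$-invariant subspace meeting $\ker Q_{11}$ as lying in $\mathrm{cut}(Q)$, so that a dimension count leaves only $\mathrm{im}(Q_{11})$. Your explicit check $\ker Q_{11}\subseteq\ker Q_{12}^\top$ is the step the paper leaves implicit, and the case $r>n-k$ you flag as the main obstacle is already covered by the same dimension count, since the eigenvalue-$1$ eigenspace lies inside $\mathrm{im}(Q_{11})$.
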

\begin{proof}
    For the ED degree, the problem reduces to \eqref{eq:proj-optprob} and thus follows from Theorem~\ref{thm:disc1}.
    Since the problem \eqref{eq:trace-opt} is $\mathrm{O}(r) \times \mathrm{O}(n - r)$-equivariant for this model, we can assume that $Q \in \mathrm{pGr}(k,n)$ has the canonical block form in Proposition~\ref{prop:SVD}. Since $Q$ is generic, the upper left $r \times r$ block of $Q$, denoted $Q'$, is a diagonal matrix with $k$ nonzero diagonal entries. The optimization problem \eqref{eq:trace-opt} reduces to 
    $\max_{P' \in \mathrm{pGr}(k,r)} \mathrm{trace}(P'Q')$. By Proposition~\ref{prop:cps}, the point $P' = \mathrm{Id}_k \oplus 0_{n-k}$ is critical.  
    In this case, $\mathrm{im}(P') = \mathrm{im}(Q')$. 
    For every other critical point $P'$, there exists a nonzero vector $v \in \mathrm{im}(P') \cap \mathrm{ker}(Q')$, and  hence $\begin{bmatrix} v^\top & 0 \end{bmatrix}^\top \in \mathrm{im}(P) \cap \mathrm{ker}(Q)$, implying that $P \in \mathrm{cut}(Q)$ is extraneous. 
    Thus $P' = \mathrm{Id}_k \oplus 0_{n-k}$ is the unique critical point.
\end{proof}

\vspace{0.3cm}
We now turn our attention to computing the ED and GD degrees of $\mathrm{pGr}(j,r) \times \mathrm{pGr}(\ell,s)$ embedded in $\mathrm{pGr}(k,n)$. In the following results, we will assume 
\begin{align*}
    2k  \leq n && j + \ell = k &&  s \leq r < r + s \leq n.
\end{align*}
We embed our product of Grassmannians in $\mathrm{pGr}(k,n)$ as
\begin{equation*} 
\mathrm{pGr}(j,r) \times \mathrm{pGr}(\ell,s) = \biggl\{ \begin{bmatrix}
            0_{n-(r+s)} & 0 & 0 \\
            0 & P_{22} & 0\\
            0 & 0 & P_{33}
        \end{bmatrix}  :  P_{22} \in \mathrm{pGr}(j,r) \mbox{ and } P_{33} \in \mathrm{pGr}(\ell,s) \biggr\}. 
\end{equation*}

Alternatively, one can view this model in Pl\"ucker coordinates.
To write the ideal, let
$R := \{n - r - s+1, \ldots,  n-s\}$ and $S := \{n-s+1, \ldots, n\}$ be the column indices of $P_{22}$  and $P_{33}$, respectively, and set $
    \mathcal C := \{J \in {\textstyle \binom{[n]}{k}} : |J \cap R| = j \textrm{ and } |J \cap S| = \ell\}.$
\begin{proposition}\label{prop:pluecker}
The prime ideal of $\mathrm{Gr}(j,r) \times \mathrm{Gr}(\ell,s) \subseteq \PP^{\binom{n}{k}-1}$ is 
    $     I_{\mathrm{Gr}(k,n)} + 
    \langle x_J : J \notin \mathcal  C
    \rangle$.
\end{proposition}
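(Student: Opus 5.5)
We prove the equality in two steps: first the variety cut out by $J := I_{\mathrm{Gr}(k,n)} + \langle x_J : J \notin \mathcal C\rangle$ is exactly the product, and then $J$ is prime.

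\textbf{Set-theoretic equality.} A point of $\mathrm{Gr}(j,r)\times\mathrm{Gr}(\ell,s)$ is a $k$-dimensional subspace $W = W_R \oplus W_S$. Here $W_R \subseteq \CC^R$ has dimension $j$ and $W_S \subseteq \CC^S$ has dimension $\ell$. Choose a $k\times n$ row-span matrix $A$ that is block diagonal, with a $j\times r$ block in columns $R$, an $\ell \times s$ block in columns $S$, and zeros elsewhere. A maximal minor $x_J$ is nonzero only if $J$ takes exactly $j$ columns from $R$ and $\ell$ columns from $S$, so $x_J = 0$ for $J\notin\mathcal C$. In that case $x_J = y_{J\cap R}\, z_{J\cap S}$, the product of the Plücker coordinates of the two factors. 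This is the Segre embedding composed with the coordinate inclusion.

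Conversely, let $W\in \mathrm{Gr}(k,n)$ have $x_J=0$ for all $J\notin \mathcal C$. Then every basis $J$ of the realizable matroid of $W$ satisfies $|J\cap R| = j$ and $|J\cap S|=\ell$. In particular no basis meets $[n]\setminus(R\cup S)$, so those elements are loops. Since all bases meet $R$ in exactly $j$ elements, the rank function satisfies $\mathrm{rk}(R) = j$, $\mathrm{rk}(S)=\ell$ and $\mathrm{rk}(R\cup S)=k$. Hence the matroid is the direct sum of its restrictions to $R$ and $S$, so $W$ decomposes as $(W\cap \CC^R)\oplus(W\cap\CC^S)$.

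To see the decomposition concretely, take a basis $J_0\in\mathcal C$ and put $A$ in reduced form with identity on the columns $J_0$. Each entry of $A$ is $\pm$ a Plücker coordinate $x_{J_0 \triangle\{a,b\}}$. An entry in a row pivoting in $R$ and a column in $S$ gives an exchange that changes $|J\cap R|$, so it vanishes. The same holds for entries in a row pivoting in $S$ and a column in $R$, and for all entries in columns outside $R\cup S$. So $A$ is block diagonal, and $V(J)$ equals the product.

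\textbf{Primality.} Both sides are homogeneous, so it suffices to check on the affine charts $x_{J_0}=1$ with $J_0\in\mathcal C$. Charts with $J_0\notin\mathcal C$ lie outside $V(J)$, where $J$ contains a unit. In the chart $x_{J_0}=1$, the coordinate ring of $\mathrm{Gr}(k,n)$ is the polynomial ring in the $k(n-k)$ entries $b_{ab}$. Each $b_{ab}$ is itself a Plücker coordinate, so the ideal generated by the $x_J$ with $J\notin\mathcal C$ contains every off-block entry $b_{ab}$ identified above.

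Modulo these entries, each remaining $x_J$ with $J\in\mathcal C$ is a product of two minors of the diagonal blocks. Each $x_J$ with $J\notin\mathcal C$ becomes the determinant of a block-diagonal matrix whose blocks are non-square, so it is already zero. The quotient is therefore the polynomial ring in the diagonal-block entries, which is a domain. The chart ideal of $J$ thus equals the ideal generated by the off-block variables, which is prime, and its quotient coordinate ring is that of a product of affine cells, matching the product of the charts of $\mathrm{Gr}(j,r)$ and $\mathrm{Gr}(\ell,s)$.

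Since $J$ agrees with the prime ideal of the product on every chart of an open cover, $J$ is the saturated prime ideal of the product, provided $J$ is itself saturated with respect to the irrelevant ideal. This last point is the main obstacle. The chart argument only gives equality after saturation, whereas the claim is that $J$ is already prime. To handle it, I would use that $I_{\mathrm{Gr}(k,n)}$ has a Gröbner basis of straightening (Plücker) relations. Modulo the variables $x_J$ with $J\notin\mathcal C$, the standard monomials supported on $\mathcal C$ should map injectively, via $x_J\mapsto y_{J\cap R} z_{J\cap S}$, to standard monomials of the Segre product of the two Grassmannian coordinate rings. This would show that the quotient is exactly the Segre product ring, hence a domain, which gives primality directly.
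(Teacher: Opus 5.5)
Your last paragraph is exactly the paper's proof. The paper maps $\mathbb C[x]/I$ to $\mathbb C[y]/I_{\mathrm{Gr}(j,r)} \otimes \mathbb C[z]/I_{\mathrm{Gr}(\ell,s)}$ by $x_J \mapsto y_{J\cap R}\otimes z_{J\cap S}$ and shows that the standard monomials supported on $\mathcal C$ span the source and go to distinct tensor products of standard monomials (a chain in $\mathcal C$ restricts to chains on $R$ and on $S$), so the map is injective into a domain. Your block-diagonal Pl\"ucker computation makes this map well defined; the converse set-theoretic inclusion and the chart argument are not needed, since injectivity alone shows that $I$ is the prime ideal of the product.
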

\begin{proof}
    Let $I = I_{\mathrm{Gr}(k,n)} + 
    \langle x_J : J \notin \mathcal  C
    \rangle$.
    We will prove that the following map is injective:
    \begin{align*}
         \varphi : \frac{\mathbb C[x_J: J \in {\textstyle \binom{[n]}{k}}]}{I} \longrightarrow
         \frac{\mathbb C[y_J: J \in {\textstyle \binom{R}{j}}]}
         {I_{\mathrm{Gr}(j,r)}}
         \otimes_\mathbb C 
         \frac{\mathbb C[z_J: J \in {\textstyle \binom{S}{\ell}}]}
         {I_{\mathrm{Gr}(\ell,s)}},
         &&
        x_J \mapsto y_{J \cap R} \otimes z_{J \cap S}
    \end{align*}
    where we set $y_{J\cap R}, z_{J\cap S} = 0$ if $J \notin \mathcal C$.
    The map $\varphi$ is a homomorphism of graded rings with $\deg(x_J) = k$, $\deg(y_J) = j$ and $\deg(z_J) = \ell$. 
    It therefore suffices to check injectivity of the linear maps on the vector spaces.
    By \cite[Section 5.4]{LB}, the set
    $\mathrm{Std}_{\alpha k}(\mathrm{Gr}(k,n)) = 
    \{x_{J_1}\cdots x_{J_\alpha} : |J_i|=k,\,  J_1 \leq \cdots \leq J_{\alpha}\}
    $
    where $J \leq J'$ if $j_i \leq j'_i$ for $i \in [k]$
     is a basis for the vector space $(\mathbb C[x]/I_{\mathrm{Gr}(k,n)})_{\alpha k}$.
     Therefore 
    \begin{equation}\label{eq:std-mon}
        \{x_{J_1}\cdots x_{J_\alpha} \in \mathrm{Std}_{\alpha k}(\mathrm{Gr}(k,n)): J_i \in \mathcal C \textrm{ for all } i = 1, \ldots, \alpha
        \}
    \end{equation}
    is a basis for the vector space $(\mathbb C[x]/I)_{\alpha k}$.
    It suffices to show that the image of \eqref{eq:std-mon} under $\varphi$ is linearly independent.
    Indeed, the image of a standard monomial is the product of two standard monomials: 
    \begin{equation*}
        \varphi(x_{J_1} \cdots x_{J_\alpha}) = 
        \prod_{i = 1}^\alpha y_{J_i \cap R} \otimes \prod_{i = 1}^\alpha z_{J_i \cap S}
    \end{equation*}
    The tensor products of basis elements of a vector space form a linearly independent set, so the image of the standard monomial basis for $(\mathbb C[x]/I)_{\alpha k}$ is linearly independent.
    Thus $\varphi$ is injective, and since the righthand side is a domain, $I$ is prime.
\end{proof}

\noindent
For the remainder of this section, we partition the data matrix $Q$ to match the partition~of~$P$:
\begin{align*}
    Q = 
    \begin{bmatrix}
        Q_{11} & Q_{12} & Q_{13}\\
        Q_{12}^\top & Q_{22} & Q_{23}\\
        Q_{13}^\top & Q_{23}^\top & Q_{33}
    \end{bmatrix}.
\end{align*}
Here $Q_{11} \in\mathrm{Sym}^2 \mathbb C^{n - r - s}$, $Q_{22} \in\mathrm{Sym}^2 \mathbb C^{r}$, and $Q_{33} \in\mathrm{Sym}^2 \mathbb C^{s}$. 
We note that when $r+s = n$, the partition above reduces to $
    Q = 
    \begin{bmatrix}
        Q_{22} & Q_{23}\\
         Q_{23}^\top & Q_{33}
    \end{bmatrix}.$
\begin{lemma} \label{lem:ED-critpoints-of-products-of-Grassmannians}
 The ED critical points for  $\mathrm{pGr}(j,r) \times \mathrm{pGr}(\ell,s) \subseteq \mathrm{pGr}(k,n)$ are pairs of projections $(P_{22}, P_{33})$ onto $j$-dimensional $Q_{22}$- and $\ell$-dimensional $Q_{33}$-invariant subspaces.    
\end{lemma}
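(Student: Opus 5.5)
The plan is to reduce the optimization over the product model to two independent problems of the form \eqref{eq:proj-optprob}, one on each factor, and then apply Proposition~\ref{prop:cps} to each factor.

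First, for $P$ in the model, block multiplication gives
\[
\mathrm{trace}(PQ) = \mathrm{trace}(P_{22}Q_{22}) + \mathrm{trace}(P_{33}Q_{33}).
\]
The entries of $Q$ outside the diagonal blocks $Q_{22}$ and $Q_{33}$ do not enter, because the corresponding blocks of $P$ vanish identically on the model. The objective is therefore a sum of two functions, each depending on only one factor of $\mathrm{pGr}(j,r)\times\mathrm{pGr}(\ell,s)$.

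Second, I use that the model is smooth: it is a product of smooth varieties. Its tangent space at $P$ is the direct sum of tangent spaces
\[
T_{P_{22}}\mathrm{pGr}(j,r)\oplus T_{P_{33}}\mathrm{pGr}(\ell,s),
\]
embedded block diagonally. By Proposition~\ref{prop:pluecker} the defining ideal is prime, so in Pl\"ucker coordinates the critical ideal \eqref{eq:critideal} needs no saturation by a singular locus. Criticality means that $\mathrm{trace}(\dot P Q)=0$ for all tangent vectors $\dot P$. Since the tangent space splits, this holds exactly when $\mathrm{trace}(\dot P_{22}Q_{22})=0$ for all $\dot P_{22}$ and $\mathrm{trace}(\dot P_{33}Q_{33})=0$ for all $\dot P_{33}$. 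In other words, $P_{22}$ is critical for \eqref{eq:proj-optprob} on $\mathrm{pGr}(j,r)$ with data $Q_{22}$, and likewise $P_{33}$ with data $Q_{33}$.

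Third, I invoke Proposition~\ref{prop:cps} on each factor, with $n$ replaced by $r$ and $s$ respectively. Its proof only uses the transitive $\mathrm{O}$-action and the explicit tangent space, so it applies to any complex symmetric data matrix. This shows that $P_{22}$ projects onto a $Q_{22}$-invariant $j$-dimensional subspace and $P_{33}$ onto a $Q_{33}$-invariant $\ell$-dimensional subspace. The converse direction is contained in the same equivalence.

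The main point needing care is whether one should argue with the projection-matrix formulation, as above, or with the Pl\"ucker formulation \eqref{eq:trace-opt-over-Gr(k,n)}. The latter carries the extra $\mathrm{trace}(XX^\top)$ row and a possible saturation. I would argue directly on $\mathrm{pGr}$, where the model is a smooth affine variety and criticality is exactly orthogonality of the gradient to the tangent space. I would then remark that the two formulations agree away from $\{\mathrm{trace}(XX^\top)=0\}$, which is removed by the saturation in \eqref{eq:critideal}.

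For generic $Q$, the blocks $Q_{22}$ and $Q_{33}$ have distinct eigenvalues. Counting the critical points via Theorem~\ref{thm:disc1} on each factor then gives $\binom{r}{j}\binom{s}{\ell}$, as in Theorem~\ref{thm:prod_Grassmannian}.
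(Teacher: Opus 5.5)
Your proposal is correct and follows the paper's proof. The objective splits as $\mathrm{trace}(P_{22}Q_{22})+\mathrm{trace}(P_{33}Q_{33})$, and Proposition~\ref{prop:cps} is applied to each factor; your tangent-space splitting just makes explicit what the paper leaves implicit. One small aside: it is smoothness of the product, which you also note, that removes the need to saturate by the singular locus, not primality of the ideal in Proposition~\ref{prop:pluecker}.
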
  
\begin{proof}
The Euclidean distance optimization problem  \eqref{eq:trace-opt} reduces to 
     \begin{align*}
       \textrm{maximize} \quad \mathrm{trace}(P_{22}Q_{22}) + \mathrm{trace}(P_{33}Q_{33}) \quad \textrm{subject to} \quad (P_{22},P_{33}) \in \mathrm{pGr}(j,r) \times \mathrm{pGr}(\ell,s),
    \end{align*}
    and Proposition \ref{prop:cps} implies the result. 
\end{proof}
We now turn to the GD degree. 
The critical points are still invariant subspaces as above, but some of them are now in $\mathrm{cut}(Q)$ and have to be removed. 
We assume in the next results that $Q$ is real. 
Since the real points of the projection Grassmannian are Zariski dense, the GD degree is the number of complex critical points for generic real data.
In particular, this assumption implies that $Q_{22}$ and $Q_{33}$ are diagonalizable.

  In the rest of this section, for any $x \in \mathbb Z$ let $x_+ := \max\{x,0\}$. 
  \begin{lemma}\label{lem:eigenspaces}
      For generic $Q \in \mathrm{pGr}(k,n)$, $Q_{22}$ has eigenvalue $1$ with multiplicity $(k + r - n)_+$, along with $\mathrm{min}\{k,r,n-r\}$ distinct eigenvalues in $(0,1)$, and an $(r-k)_+$-dimensional kernel;
      $Q_{33}$ has $\min\{k,s\}$ distinct eigenvalues in $(0,1)$ and an $(s - k)_+$-dimensional kernel.
  \end{lemma}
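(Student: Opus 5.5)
We will derive the statement from Proposition~\ref{prop:SVD}, applied to a principal diagonal block rather than to the leading block. Since the model and the problem are invariant under simultaneously permuting rows and columns, and $Q$ is generic in $\mathrm{pGr}(k,n)$, we may move the rows and columns indexed by $R$ (respectively $S$) to the front. Then $Q_{22}$ (respectively $Q_{33}$) becomes the leading $m\times m$ block with $m=r$ (respectively $m=s$). We will describe the spectrum of a generic leading $m\times m$ block $Q'$ of $Q\in\mathrm{pGr}(k,n)$ as a function of $m$, and then specialize.

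\emph{Rank and the eigenvalue $1$.} For generic $Q$, the rank of $Q'$ is $\min\{k,m\}$, since $Q'=Z'Z'^\top$ where $Z'$ is the first $m$ rows of an $n\times k$ matrix $Z$ with $Z^\top Z=\mathrm{Id}_k$. Hence the kernel has dimension $(m-k)_+$. By Lemma~\ref{lemma: eigenvalue1}, the eigenvalue $1$ has geometric multiplicity exactly $(k+m-n)_+$ for generic $Q$. For $m=s$ we have $s\le n/2$ (because $s\le r$ and $r+s\le n$) and $k\le n/2$, so $k+s-n\le 0$ and no eigenvalue $1$ appears.

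\emph{The remaining eigenvalues.} Use the block form. If $m\le n-m$ and $k\le n-m$, case~(1) of Proposition~\ref{prop:SVD} (after swapping roles if $m>n/2$, or applying its proof directly with $m$ in place of the leading size) gives $Q'=U^\top(C^2\oplus 0)U$. The nonzero spectrum is therefore the diagonal of $C^2$: there are $\min\{k,m\}$ entries $c_i^2$, and $C^2+S^2=\mathrm{Id}$. In case~(2), the nonzero spectrum is $C^2$ of size $n-m$ together with $\mathrm{Id}_{k-(n-m)}$. Altogether the number of eigenvalues different from $0$ and $1$ is $\min\{k,m,n-m\}$, which is $\min\{k,s\}$ for $m=s\le n/2$.

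\emph{Distinctness and lying in $(0,1)$.} Realness (the real points are Zariski dense, so we may take $Q$ real) makes $Q'$ real symmetric and positive semidefinite with $Q'\preceq \mathrm{Id}$, because $Q'$ is a compression of an orthogonal projection. Hence all eigenvalues lie in $[0,1]$, and those not equal to $0$ or $1$ lie in $(0,1)$. We expect the main obstacle to be showing that these $\min\{k,m,n-m\}$ eigenvalues are generically distinct and that none of them accidentally equals $0$ or $1$ beyond the multiplicities stated. Since these are Zariski open conditions on the irreducible variety $\mathrm{pGr}(k,n)$, it suffices to exhibit one point where they hold. We construct it explicitly from the canonical form, reading it backwards: choose distinct $c_i^2\in(0,1)$, set $s_i=\sqrt{1-c_i^2}$, and build $Q$ as in Proposition~\ref{prop:SVD} with $U,V$ identity. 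We then check directly that this $Q$ is symmetric, idempotent and of trace $k$, because each $2\times2$ block $\begin{bmatrix}c^2&cs\\cs&s^2\end{bmatrix}$ is a rank-one projection. For this witness, the leading block has exactly the prescribed spectrum. Combining the three steps for $m=r$ and $m=s$ gives the lemma.
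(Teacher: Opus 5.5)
Your outline is the paper's proof. The paper gets the kernel from $\mathrm{rank}(Q_{22})=\min\{k,r\}$ and the eigenvalue $1$ from Lemma~\ref{lemma: eigenvalue1}. It places the rest in $(0,1)$ by Cauchy interlacing, which is the same fact as your $0\preceq Q'\preceq \mathrm{Id}$. Distinctness is only asserted ``generically'' there, so your Zariski-open-plus-witness argument adds rigor rather than taking a different route.

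One step fails as written. Proposition~\ref{prop:SVD} assumes $k\le m$, and the case $m<k$ is not excluded here.
\begin{itemize}
\item The statement's $\min\{k,s\}$ explicitly allows $s<k$.
\item $r<k$ also occurs, e.g.\ $n=6$, $k=3$, $r=s=2$, $j=2$, $\ell=1$.
\end{itemize}
Your swapping clause only triggers for $m>n/2$. But $m<k\le n/2$ lands in your case~(1), whose canonical form places a $k\times k$ block $C^2$ inside the $m\times m$ block $Q'$, which is impossible. So for $m<k$, neither your count of eigenvalues outside $\{0,1\}$ nor your witness ``as in Proposition~\ref{prop:SVD} with $U=V=\mathrm{Id}$'' is available.

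Both parts can be repaired without the proposition.
\begin{itemize}
\item \textbf{Count.} For real $Q$, algebraic and geometric multiplicities agree. So the number of eigenvalues of $Q'$ outside $\{0,1\}$ is $m-(m-k)_+-(k+m-n)_+$. Since $k\le n/2$, this equals $\min\{k,m,n-m\}$.
\item \textbf{Witness for $m<k$.} Choose distinct $c_i^2\in(0,1)$ and set $s_i=\sqrt{1-c_i^2}$. Pair each coordinate $i\le m$ with its own coordinate outside the block, via the rank-one projection $\bigl[\begin{smallmatrix} c_i^2 & c_is_i\\ c_is_i & s_i^2\end{smallmatrix}\bigr]$. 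Put $\mathrm{Id}_{k-m}$ on $k-m$ further outside coordinates. This needs $k$ outside coordinates, and $n-m>n-k\ge k$ supplies them. The result is an orthogonal projection of trace $k$ with $Q'=\mathrm{diag}(c_1^2,\dots,c_m^2)$. It has trivial kernel, no eigenvalue $1$ (as $k+m<2k\le n$), and $m=\min\{k,m,n-m\}$ distinct eigenvalues in $(0,1)$.
\end{itemize}

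With this patch your proof is complete. Note that the open conditions for $m=r$ and $m=s$ intersect in a dense open subset of the irreducible $\mathrm{pGr}(k,n)$, so they hold simultaneously for generic $Q$.
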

  \begin{proof}
  We have ${\rm rank}(Q_{22})=\min\{k,r\}$, hence $Q_{22}$ has an $(r-k)_+$-dimensional kernel. By Lemma \ref{lemma: eigenvalue1}, $Q_{22}$ has an eigenvalue $1$ of multiplicity $(k+r-n)_+$. 
  Generically, the remaining $\min\{k,r,n-r\}$ eigenvalues are distinct  and they are in $(0,1)$ by Cauchy's interlacing theorem. Similarly, $Q_{33}$ has an $(s-k)_+$-dimensional kernel, but generically no eigenvalue equal to $1$, as $k+s-n\leq 0$. The rest of the  $\min\{k,s\}$ eigenvalues are distinct in $(0,1)$.
  \end{proof}

Lemma~\ref{lem:eigenspaces} gives a complete picture of the eigenspaces of $Q_{22}$ and $Q_{33}$; we now give them names.
Let $f_1, \ldots, f_{(k+r-n)_+} \in \mathbb R^r$ be a basis for the eigenspace of $Q_{22}$ with eigenvalue $1$ and set $\hat f_i = (0,f_i,0) \in \RR^n$.
Let $u_i \in \RR^r$ be such that $Q_{22}u_i = \lambda_i u_i$ with $\lambda_i \in (0,1)$ and $\hat{u}_i = (0,u_i,0)\in \RR^n$ for $i=1,\dots,\min\{k,r,n-r\}$. Similarly,  let $v_h \in \RR^s$ be such that $Q_{33}v_h = \mu_h v_h$ for $\mu_h \in (0,1)$ and $\hat{v}_h = (0,0,v_h)\in \RR^n$ for $h=1,\dots,\min\{k,s\}$.
\[
\begin{array}{ccccc}
\toprule
     & \textrm{Eigenvalue} & \textrm{Eigenspace} & \textrm{Basis} \\
     \midrule 
    Q_{22} & 1 & F & \{f_1, \ldots, f_{(k+r-n)_+}\}\\
    Q_{22} & 0< \lambda_i < 1 & U_i & \{u_i\} & \textrm{for } i = 1, \ldots, \mathrm{min}\{k,r,n-r\}\\
    Q_{33} & 0< \mu_h < 1 & V_h & \{v_h\} & \textrm{for } i = 1, \ldots, \mathrm{min}\{k,s\}\\
    \bottomrule
\end{array}
\]
Henceforth, $I \subseteq \{1, \ldots, \mathrm{min}\{k,r,n-r\}\}$ and $H \subseteq \{i = 1, \ldots, \mathrm{min}\{k,s\}\}$ will denote sets of sizes $|I| = \delta$ and $|H| = \ell$ where $\delta := j-(k+r-n)_+$.
  
\begin{lemma} \label{lem:cut-locus-for-products}  
If 
$(P_{22}, P_{33}) \in  \mathrm{pGr}(j,r) \times \mathrm{pGr}(\ell,s)$ is a GD critical point then  $P_{22}$ projects onto a subspace of the form $F \oplus \bigoplus_{i \in I}U_i$ and $P_{33}$ projects onto a subspace of the form $\bigoplus_{h \in H}V_h$.
\end{lemma}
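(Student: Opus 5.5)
By Lemma~\ref{lem:ED-critpoints-of-products-of-Grassmannians}, every ED critical point $(P_{22},P_{33})$ projects onto a $j$-dimensional $Q_{22}$-invariant subspace and an $\ell$-dimensional $Q_{33}$-invariant subspace. The plan is to determine which of these invariant subspaces can avoid $\mathrm{cut}(Q)$, using the eigenspace description of Lemma~\ref{lem:eigenspaces}.

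For generic real $Q$, the matrices $Q_{22}$ and $Q_{33}$ are real symmetric, hence diagonalizable by real orthogonal matrices. Therefore every invariant subspace is a direct sum of subspaces of eigenspaces. The eigenspaces of $Q_{22}$ are $F$ (eigenvalue $1$), the lines $U_i$ (distinct eigenvalues in $(0,1)$), and $\ker Q_{22}$. The eigenspaces of $Q_{33}$ are the lines $V_h$ and $\ker Q_{33}$. So $\mathrm{im}\,P_{22} = F' \oplus \bigoplus_{i\in I}U_i \oplus K_2$ with $F'\subseteq F$ and $K_2 \subseteq \ker Q_{22}$, and $\mathrm{im}\,P_{33} = \bigoplus_{h\in H}V_h \oplus K_3$ with $K_3\subseteq\ker Q_{33}$.

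The main step is to show that a GD critical point forces $K_2=K_3=0$ and $F'=F$. I will use the characterization $\mathrm{cut}(Q)=\{P:\mathrm{rank}(QP)<k\}$, equivalently $\mathrm{im}(P)\cap\ker Q\neq 0$.

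First I show $K_2=0$. Suppose $0\neq w\in\ker Q_{22}$ and set $\hat w=(0,w,0)$. Since $Q$ is a projection, it is positive semidefinite in the real case, so $\hat w^\top Q\hat w = w^\top Q_{22}w = 0$ gives $Q\hat w=0$. This is exactly the step where realness matters, since complex symmetric projections are not semidefinite. I handle it as follows: for real $Q$ write $Q=A^\top A$; then $\|A\hat w\|^2=0$, so $Q\hat w=0$. Hence $\hat w \in \mathrm{im}(P)\cap\ker Q$ and $P\in\mathrm{cut}(Q)$. Because the GD degree may be computed from generic real data, and the cut condition is Zariski-closed while the critical set is defined by closure, restricting to real $Q$ is legitimate. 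The same argument gives $K_3=0$.

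Next I show $F'=F$ by counting dimensions. Since $\dim(\mathrm{im}\,P_{22})=j$ and $|I|\le \min\{k,r,n-r\}$, we need $\dim F'=j-|I|$. To see that $F'$ must be all of $F$ rather than a proper subspace, I compute $\mathrm{rank}(QP)$. Vectors $\hat f$ with $f\in F$ satisfy $Q\hat f=\hat f$, because $\hat f^\top Q\hat f = \|\hat f\|^2$ and $Q$ is an orthogonal projection. The images $Q\hat u_i$ and $Q\hat v_h$ have nonzero components, namely $\lambda_i u_i$ and $\mu_h v_h$ in their own blocks. So $\mathrm{rank}(QP)=k$ follows automatically once $K_2=K_3=0$, because $\hat u^\top Q \hat u>0$ for any nonzero $\hat u$ in a positive-eigenvalue eigenspace, so $Q$ is injective on $\mathrm{im}\,P$. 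Thus a proper subspace $F'\subsetneq F$ is not excluded by the cut locus. It is excluded instead by reducedness/finiteness, and I expect this to be the main obstacle: if $F'\subsetneq F$ with $0<\dim F'<\dim F$, the choices of $F'$ form a positive-dimensional family of critical points outside the cut locus. To reconcile this with the statement, I would re-check the eigenvalue count of Lemma~\ref{lem:eigenspaces}. Since $\delta=j-(k+r-n)_+$ is the definition of the size of $I$, the intended claim is that $F$ is always contained in $\mathrm{im}\,P_{22}$. I would argue this from generic finiteness of the GD degree: the closure in~\eqref{eq:crit} of a positive-dimensional family would contradict the finite count, so the lemma should be read as describing the isolated GD critical points. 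I would also double-check whether such $F'$ actually yield $\mathrm{rank}(QP)<k$ via the interaction of the third block with $\ker Q_{11}$-type directions, and fall back on the finiteness argument if they do not.
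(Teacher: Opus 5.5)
Your treatment of $\ker Q_{22}$ and $\ker Q_{33}$ matches the paper. The step $F\subseteq \mathrm{im}(P_{22})$, however, is a genuine gap, and the reasoning you give around it is wrong.

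The claim that $Q$ is injective on $\mathrm{im}\,P$ once $K_2=K_3=0$ does not follow from $\hat u^\top Q\hat u>0$ for individual eigenvectors. For $x=x_2+x_3\in\mathrm{im}\,P$ with components in the second and third blocks, $x^\top Qx$ contains the cross term $2x_2^\top Q_{23}x_3$, which can cancel the positive diagonal contributions. Lemma~\ref{lem:n=r+s} is exactly such a failure: there $F'=F$ and $K_2=K_3=0$, yet any $i\in I\cap H$ makes $Q\hat u_i$ and $Q\hat v_i$ parallel. So your conclusion that $F'\subsetneq F$ escapes the cut locus is false.

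The finiteness fallback cannot rescue this, for three reasons:
\begin{enumerate}
\item Finiteness of the GD degree is only proved for $k=2$ (Theorem~\ref{thm:gd-finite}).
\item For this model, finiteness is a consequence of the very lemma you are proving, so invoking it is circular. Even if finiteness were available, it would only show that your injectivity claim must fail.
\item It says nothing about $F'=0$, which yields only finitely many candidates. For instance, with $n=8$, $k=4$, $r=5$, $s=2$, $j=\ell=2$ one has $\dim F=1$, so no positive-dimensional family occurs. Your argument would then admit three spurious points with $\mathrm{im}\,P_{22}=U_{i_1}\oplus U_{i_2}$, alongside the three genuine ones with $\mathrm{im}\,P_{22}=F\oplus U_i$.
\end{enumerate}
Reading the lemma as a statement only about isolated critical points changes what is being claimed.

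The missing idea is that the cut-locus condition can be tested from either side, since $P\in\mathrm{cut}(Q)$ if and only if $Q\in\mathrm{cut}(P)$ for projections of equal rank.
\begin{itemize}
\item Because $P_{22}$ commutes with $Q_{22}$, we have $F=(F\cap\mathrm{im}\,P_{22})\oplus(F\cap\ker P_{22})$.
\item Suppose $0\neq u\in F\cap\ker P_{22}$. Then $\hat u\in\ker P$.
\item As you yourself observed, $Q\hat u=\hat u$, since $\hat u^\top(\mathrm{Id}_n-Q)\hat u=0$ and $\mathrm{Id}_n-Q$ is a real orthogonal projection. Hence $0\neq\hat u\in\ker P\cap\mathrm{im}\,Q$.
\item So $P$ annihilates a nonzero vector of the $k$-dimensional space $\mathrm{im}\,Q$, giving $\mathrm{rank}(PQ)<k$.
\item Since $QP=(PQ)^\top$, also $\mathrm{rank}(QP)<k$, i.e.\ $P\in\mathrm{cut}(Q)$.
\end{itemize}
This rules out every $F'\subsetneq F$, including $F'=0$, directly and without any appeal to finiteness. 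This is how the paper completes the proof.
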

\begin{proof}
The fact that the critical points are invariant spaces of the appropriate dimensions follows from Proposition \ref{prop:cps}.
We now describe which points to remove.
Let $u \in \RR^r$ such that $Q_{22}u = \lambda u$ and let $\hat{u}:=(0,u,0)\in \RR^n$. Then 
  \begin{align}\label{eq:eigennorms}
      \|Q\hat{u}\|^2 = \hat{u}^\top Q \hat{u} = u^\top Q_{22}u = \lambda\|u\|^2 \,.
  \end{align}
  Similarly, if $v \in \RR^s$ is such that $Q_{33}v=\mu v$ and $\hat{v}=(0,0,v)$, then $\|Q\hat{v}\|^2=\mu \|v\|^2$. 

    Assume $(P_{22}, P_{33})$ is critical and let $u \in \ker(Q_{22}) \cap \mathrm{im}(P_{22})$ be nonzero. Then $P\hat{u} = \hat{u}$, so that $\hat{u} \in \mathrm{im}(P)$.
  By \eqref{eq:eigennorms}, $\|Q\hat u\|=0$ and $\hat u \in {\rm ker}(Q)\cap {\rm im}(P)$. This is equivalent to $P \in {\rm cut}(Q)$. The same conclusion holds if $v \in \ker(Q_{33}) \cap \mathrm{im}(P_{33})$.

We now prove that $F \subseteq \mathrm{im}(P_{22})$. 
Since $P_{22}Q_{22} = Q_{22}P_{22}$, the vector space $F$ decomposes as $F = (\mathrm{im}(P_{22}) \cap F) \oplus (\mathrm{ker}(P_{22}) \cap F)$.
    Suppose for contradiction that $u \in \mathrm{ker}(P_{22}) \cap F$ is nonzero. 
    Then $\hat u \in \mathrm{ker}(P)$
  and by \eqref{eq:eigennorms}
  \begin{align*}
      \|({\rm Id}_n-Q)\hat{u}\|^2 = \hat{u}^\top({\rm Id}_n-Q)\hat{u} = \|\hat{u}\|^2 - \hat{u}^\top Q\hat u = 0\,.
  \end{align*}
    Thus,
$\hat{u} \in \ker({\rm Id}_n-Q) =\mathrm{im}(Q)$. 
It follows that 
$\ker(P) \cap \mathrm{im}(Q) \neq 0$. Since $P$ and $Q$ span subspaces of the same dimension, this condition is equivalent to $\mathrm{im}(P)\cap \ker(Q)\neq 0$, that is $P \in {\rm cut}(Q)$. This contradiction shows that $F \cap \ker(P_{22})$ is trivial and $F \subseteq \mathrm{im}(P_{22})$. 
\end{proof}

This lemma gives a finite set of possible critical points. 
We will now show that if $n > r + s$, then all of the points in Lemma~\ref{lem:cut-locus-for-products} are critical, and that if $n = r+s$, some must be removed.
 
 Assume $P_{22}$ projects onto $F \oplus \bigoplus_{i \in I}U_i$ 
and $P_{33}$ projects onto  $\bigoplus_{h \in H}V_h$.
 Then $P \notin {\rm cut}(Q)$ if and only if $Q$ is injective on ${\rm im}(P)$ as in \eqref{eq:cut}. 
  Equivalently, the vectors $Q\hat{f}_1,\dots,Q\hat{f}_{(k+r-n)_+}$, $\{Q\hat{u}_i\}_{i \in I}$ and $\{Q\hat{v}_h\}_{h \in H}$ are linearly independent. Fix the notation
  \begin{align*}
      x_i := \frac{Q \hat u_i}{\sqrt{\lambda_i}}\,, \quad y_h:=\frac{Q \hat v_h}{\sqrt{\mu_i}}\,.
  \end{align*}
  As in the proof of Lemma \ref{lemma: eigenvalue1}, $Q\hat{f}_i = \hat{f}_i$ for all $i \in[(k+r-n)_+]$, so $P \notin {\rm cut}(Q)$ if and~only~if
  \begin{align} \label{eq:linind1}
      \hat{f}_1,\dots,\hat{f}_{(k+r-n)_+}, \{x_i\}_{i \in I}, \{y_h\}_{h \in H} \ \textrm{ are linearly independent in $\RR^n$.}
  \end{align}
  We factor $Q=ZZ^\top$ where the columns of  $Z \in \RR^{n\times k}$ 
  form an orthonormal basis of $\mathrm{im}(Q)$, i.e., 
  $Z^\top Z={\rm Id}_k$. Partition $Z$ as 
  \begin{align*}
      Z=\begin{bmatrix}
          Z_1 \\
          Z_2 \\
          Z_3
      \end{bmatrix}, \quad Z_1\in \RR^{(n-r-s) \times k}, \, Z_2 \in \RR^{r \times k}\,, Z_3 \in \RR^{s \times k}.
  \end{align*}
  In particular, $Z_2Z_2^\top=Q_{22}$ and $Z_3Z_3^\top=Q_{33}$, while $Z_1^\top Z_1+Z_2^\top Z_2+Z_3^\top Z_3={\rm Id}_k$. 
  We define 
  \begin{align*}
      z_i := \frac{Z_2^\top u_i}{\sqrt{\lambda_i}}\, , \quad w_h := \frac{Z_3^\top v_h}{\sqrt{\mu_h}}\,.
  \end{align*}
  By definition, $Zz_i = x_i$ and $Zw_h=y_h$. Moreover, since $Q_{22}u_i = \lambda_i u_i$, we obtain $Z_2^\top Z_2z_i = \lambda_i z_i$, and similarly, $Z_3^\top Z_3w_h = \mu_hw_h$. Since $\hat{f}_i \in {\rm im}(Q) = {\rm im}(Z)$, there exists unique $g_i \in \RR^k$ such that $\hat{f}_i = Z g_i$. From $Z^\top Z = {\rm Id}_k$, it follows that $Z:\RR^k \rightarrow {\rm im}(Q)$ is an isometry and \eqref{eq:linind1} is equivalent to 
  \begin{align}\label{eq:linind2}
      g_1,\dots,g_{(k+r-n)_+}, \{z_i\}_{i \in I}, \{w_h\}_{h \in H} \ \textrm{ are linearly independent in $\mathbb R^k$}\,. 
  \end{align}

\begin{lemma} \label{lem:n>r+s}
Suppose $n > r + s$.
If $P_{22}$ projects onto $F \oplus  \bigoplus_{u \in I}U_i$
and $P_{33}$ projects onto $\bigoplus_{h \in H}V_h$, then $(P_{22}, P_{33})$ is GD critical for $Q$.
\end{lemma}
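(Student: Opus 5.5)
The plan is to verify the linear-independence criterion \eqref{eq:linind2} for generic $Q$. That criterion was shown above to be equivalent to $P \notin \mathrm{cut}(Q)$. Together with Proposition~\ref{prop:cps} (the point is critical, being a projection onto a $Q$-invariant subspace since $Q_{22}$- and $Q_{33}$-invariance of the blocks gives $PQ=QP$ modulo the off-diagonal blocks already accounted for in Lemma~\ref{lem:ED-critpoints-of-products-of-Grassmannians}), this gives GD-criticality.

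\emph{Step 1: reformulate \eqref{eq:linind2}.} First note the count: $(k+r-n)_+ + \delta + \ell = j+\ell = k$, so \eqref{eq:linind2} asks that these $k$ vectors form a basis of $\RR^k$. Set $A := Z_2^\top Z_2$ and $B := Z_3^\top Z_3$, symmetric positive semidefinite $k\times k$ matrices with $A+B = \mathrm{Id}_k - Z_1^\top Z_1$. We have $Az_i = \lambda_i z_i$ and $Bw_h = \mu_h w_h$. Moreover $g_i = Z^\top \hat f_i = Z_2^\top f_i$ satisfies $Ag_i = Z_2^\top Q_{22} f_i = g_i$. Hence $W_1 := \mathrm{span}\{g_1,\dots,g_{(k+r-n)_+}, z_i : i\in I\}$ is a $j$-dimensional sum of eigenspaces of $A$ with distinct nonzero eigenvalues, so the listed vectors are mutually orthogonal and independent. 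Likewise $W_2 := \mathrm{span}\{w_h : h \in H\}$ is an $\ell$-dimensional $B$-invariant subspace. Thus \eqref{eq:linind2} becomes $W_1 \cap W_2 = 0$, i.e.\ $\RR^k = W_1 \oplus W_2$.

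\emph{Step 2: reduce to a Zariski-open condition.} For each of the finitely many pairs $(I,H)$, the condition $W_1\cap W_2=0$ is the nonvanishing of a determinant. On the open set where the eigenvalues are distinct, this determinant can be expressed through eigenprojections, which are rational in $Q$. It is therefore a nonempty Zariski-open condition as soon as it holds at a single admissible $Q$, or along a path through a generic one. Since $\mathrm{pGr}(k,n)$ is irreducible, it suffices to show that each such condition is not identically violated.

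\emph{Step 3 (main obstacle): exploit the slack $n-r-s>0$ to decouple $A$ and $B$.} I would use the transitive $\mathrm{O}(n)$-action on the Stiefel matrices $Z$, i.e.\ on $\mathrm{pGr}(k,n)$. Left multiplication by rotations acting on rows of block $1$ together with rows of block $3$ leaves $Z_2$, and hence $A$ and $W_1$, unchanged, while it changes $Z_3$ to $Z_3 + \epsilon\, C Z_1 + O(\epsilon^2)$ for arbitrary $C \in \RR^{s\times(n-r-s)}$. Because $Z_1 \ne 0$ generically when $n>r+s$, these moves deform $B$ and its eigenvectors. The aim is to show that the orbit of $W_2$ under such moves is not contained in the Schubert divisor $\{W : W\cap W_1 \neq 0\}$. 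I would first try a degeneration: choose $Z$ with $Z_3$ of a special form, e.g.\ $B$ diagonal in an orthonormal basis complementary to a prescribed $W_1$, and $Z_1$ absorbing the rest of $\mathrm{Id}_k - A - B$, which is possible since Step 1 only requires positivity of $\mathrm{Id}_k-A-B$. Then perturb to restore genericity of the eigenvalues. The delicate point is that a single example must work for all choices $(I,H)$ simultaneously, or one argues separately for each pair, which Step 2 permits. It also requires the positivity constraint $Z_1^\top Z_1 \succeq 0$ of rank at most $n-r-s$; with small $n-r-s$ this rank bound is where I expect the real work, and I would handle it by keeping $B$ close to a generic one and using the rank-$(n-r-s)$ first-order freedom $C Z_1$ only infinitesimally.
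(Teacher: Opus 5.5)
Your Steps 1 and 2 correctly reproduce the paper's reduction. Condition \eqref{eq:linind2} says exactly that $\RR^k=W_1\oplus W_2$, where $W_1$ is the $j$-dimensional $A$-invariant span of the $g_i$ and the $z_i$ ($i\in I$) and $W_2$ is the $\ell$-dimensional $B$-invariant span of the $w_h$ ($h\in H$). It then remains to show that the corresponding determinant is not identically zero. (A small correction: criticality of $(P_{22},P_{33})$ comes from Lemma~\ref{lem:ED-critpoints-of-products-of-Grassmannians}, not from $Q$-invariance, since $\mathrm{im}(P)$ is in general not $Q$-invariant.)

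That non-vanishing is the entire content of the lemma, and Step 3 does not establish it. It lists strategies and defers ``the real work'' without exhibiting a single $Q$ with $W_1\cap W_2=0$. The paper supplies an explicit witness, built as follows.
\begin{enumerate}
\item Declare $g_1,\dots,g_\gamma$, the $z_i$ and the $w_h$ to be $e_1,\dots,e_k\in\RR^k$.
\item Let each corresponding column of $Z$ be a unit vector supported on at most two of the three row blocks, using distinct standard basis vectors inside each block. For example, $Zz_i=(0,\sqrt{\lambda_i}\,e,\sqrt{1-\lambda_i}\,e')$ or $Zz_i=(\sqrt{1-\lambda_i}\,e,\sqrt{\lambda_i}\,e',0)$, and analogously for $w_h$ with $\mu_h$.
\item Then $Z^\top Z=\mathrm{Id}_k$, $A$ and $B$ are diagonal with the prescribed eigenvalues, and $W_1,W_2$ are complementary coordinate subspaces.
\end{enumerate}
The rank bound on $Z_1^\top Z_1$ that worried you is just a row count. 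The first block is used by $(k-s)_+-\gamma+(k-r)_+$ columns, which is at most $n-r-s$ because $2k\le n$.

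Even with such a witness, or one of the simultaneously diagonal type you propose, the proof is not finished, and this is where your Step 2 is too quick. Eigenprojections are rational in $Q$ together with the eigenvalue, not in $Q$ alone. So the determinant attached to a pair $(I,H)$ lives on a finite cover of $\mathrm{pGr}(k,n)$ that labels the eigenvalues of $A$ and $B$. One witness settles a pair for generic $Q$ only if this cover is irreducible, or if you have a witness on each component. (The product over all pairs is a genuine function of $Q$, but certifying it needs one witness at which every pair works, which a witness with commuting $A$ and $B$ does not provide in general.)

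This matters here for two reasons. First, at a witness where $A$ and $B$ commute, each $w_h$ is also an eigenvector of $A$, so every pair in which a chosen $w_h$ coincides with a chosen $z_i$ fails there. Second, for $k\le s$ the witness above uses no rows of the first block. It is therefore equally a witness when $n=r+s$, where Lemma~\ref{lem:n=r+s} shows that pairs with $I\cap H\neq\emptyset$ fail identically.

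Hence the hypothesis $n>r+s$ must be used precisely to show that the eigenvectors of $A$ and of $B$ can be relabelled independently. One option is a monodromy argument based on your deformation $Z_3\mapsto Z_3+\epsilon CZ_1$, which fixes $A$ and $W_1$ and, when $Z_1\neq 0$, can move $B$ out of the commutant of $A$. Another is to produce, for each pair, a witness that genuinely uses the first block. The paper's own proof passes over this point with ``without loss of generality $I=[\delta]$ and $H=[\ell]$''. A complete argument needs both the explicit witness and this independence step.
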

\begin{proof} 
 For generic $Q$, the eigenvectors $\{z_i\}_{i \in I}$ of $Z_2^\top Z_2$ and the eigenvectors $\{w_h\}_{h \in H}$ of $Z_3^\top Z_3$ are algebraic functions of $Q$, and so is the determinant of the matrix obtained from the vectors in \eqref{eq:linind2}. We claim that for every choice of $I$ with $|I|=\delta$ and for every choice of $H$ with $|H| = \ell$ this determinant is not identically zero. It follows that \eqref{eq:linind2} can only fail on a proper Zariski closed subset of ${\rm pGr}(k,n)$. 
 Therefore, for generic $Q \in {\rm pGr}(k,n)$ \eqref{eq:linind2} holds for every $I$ and $H$ and every corresponding $P$ lies outside of ${\rm cut}(Q)$.

 To prove the claim, we construct an explicit $Q$ for which \eqref{eq:linind2} holds. Let $e_m^{(c)}$ be the $m$-th standard basis vector of $\RR^c$ and assume without loss of generality that $I = [\delta]$ and $H=[\ell]$. For ease of notation we set $\gamma:=(k+r-n)_+$. Choose distinct $0<\lambda_i<1$ and $0<\mu_h<1$ for $i \in I$ and $h \in H$ and define 
  \begin{align*}
      &g_i = e_i^{(k)} \ \quad \quad \quad \ \,\textrm{for } i=1,\dots,\gamma\, , \\
      &z_i = e_{\gamma + i}^{(k)} \quad \quad \quad \ \,\textrm{for } i=1,\dots,\delta \, , \\
      &w_h = e_{j+h}^{(k)}  \quad \quad \ \ \ \textrm{for } h=1,\dots,\ell \,.
  \end{align*}
  Thus, the columns of $Z$ are images of these standard basis vectors of $\RR^k$. We define them as
  \[\begin{cases}
      Zg_i = (0, e_i^{(r)},0) & \text{for } 1\leq i\leq \gamma \\
      Zz_i = (\sqrt{1-\lambda_i}e_i^{(n-r-s)}, \sqrt{\lambda_i}e_{\gamma+i}^{(r)},0) & \text{for } 1\leq i\leq (k-s)_+-\gamma \\
      Zz_i = (0,\sqrt{\lambda_i}e_{\gamma+i}^{(r)}, \sqrt{1-\lambda_i}e_{\gamma-(k-s)_+ + i}^{(s)}) & \text{for } (k-s)_+-\gamma< i\leq j-\gamma \\
      Zw_h = (\sqrt{1-\mu_h}e_{(k-s)_+-\gamma+h}^{(n-r-s)},0,\sqrt{\mu_h}e_{j-(k-s)_++h}^{(s)}) & \text{for }1\leq h\leq (k-r)_+ \\
      Zw_h = (0, \sqrt{1-\mu_h}e_{j+h-(k-r)_+}^{(r)}, \sqrt{\mu_h}e_{j-(k-s)_++h}) & \text{for }(k-r)_+<h\leq \ell
  \end{cases}\,.
  \] 
One can check that the ranges of indices are well defined. Moreover, with these definitions $Z^\top Z = {\rm Id}_k$ and $Q=ZZ^\top$ is an orthogonal projection matrix of rank $k$. By construction, the $g_i$'s are eigenvectors with eigenvalue $1$ for $Z_2^\top Z_2$ and the $z_i$'s (respectively $w_h$'s) are eigenvectors with eigenvalue $\lambda_i$ (respectively $\mu_h$)  for $Z_2^\top Z_2$ (respectively $Z_3^\top Z_3$). They are linearly independent by construction, proving that \eqref{eq:linind2} holds for this $Q$.  
\end{proof}

Suppose now that $n = r + s$. 
 By Proposition~\ref{prop:SVD},  the eigenvectors $u_i$ and $v_h$ come in singular vector pairs; index them so that $(u_i, v_i)$ is a singular vector pair for $i \in [\min\{k,s\}]$.
\begin{lemma} \label{lem:n=r+s}
Suppose $n = r + s$.
If $P_{22}$ projects onto $F \oplus  \bigoplus_{u \in I}U_i$
and $P_{33}$ projects onto $\bigoplus_{h \in H}V_h$, then $(P_{22}, P_{33})$ is GD critical for $Q$ if and only if $I \cap H = \emptyset$.
\end{lemma}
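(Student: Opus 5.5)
The plan is to work with the reformulation \eqref{eq:linind2}. By Lemma~\ref{lem:cut-locus-for-products} and the discussion after it, $(P_{22},P_{33})$ is GD critical exactly when the vectors $g_1,\dots,g_{(k+r-n)_+}$, $\{z_i\}_{i\in I}$, $\{w_h\}_{h\in H}$ are linearly independent in $\RR^k$. The point specific to $n=r+s$ is that the block $Z_1$ is empty, so
\[
Z_2^\top Z_2+Z_3^\top Z_3=\mathrm{Id}_k .
\]
Hence all of these vectors are eigenvectors of the single real symmetric $k\times k$ matrix $M:=Z_2^\top Z_2$:
\begin{itemize}
\item for $g_i$: since $Zg_i=\hat f_i=(f_i,0)$, we have $Z_3g_i=0$ and $Z_2g_i=f_i$, so $Mg_i=g_i$ (eigenvalue $1$);
\item $Mz_i=\lambda_i z_i$ by construction;
\item $Z_3^\top Z_3w_h=\mu_hw_h$ gives $Mw_h=(1-\mu_h)w_h$.
\end{itemize}
Each vector is nonzero, for instance $\|z_i\|^2=u_i^\top Q_{22}u_i/\lambda_i=\|u_i\|^2$ by \eqref{eq:eigennorms}.

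Next I make the singular-pair indexing concrete. Take $z_i$ with $Mz_i=\lambda_iz_i$. Then $Z_3^\top Z_3 z_i=(1-\lambda_i)z_i$, so $Z_3z_i$ is an eigenvector of $Z_3Z_3^\top=Q_{33}$ with eigenvalue $1-\lambda_i$. It is nonzero because $\|Z_3z_i\|^2=(1-\lambda_i)\|z_i\|^2$ and $\lambda_i\neq 1$. Since the eigenvalues of $Q_{33}$ in $(0,1)$ are simple (Lemma~\ref{lem:eigenspaces}), $Z_3z_i$ spans some $V_h$ with $\mu_h=1-\lambda_i$. Then $w_h\propto Z_3^\top Z_3z_i\propto z_i$. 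One should check that this matches the pairing of Proposition~\ref{prop:SVD}: $Q_{23}^\top u_i=Z_3Z_2^\top u_i\propto Z_3z_i$, so $Z_3 z_i$ is indeed the right singular vector paired with $u_i$. With the indexing $(u_i,v_i)$ this gives $\mu_i=1-\lambda_i$ and $w_i\parallel z_i$ for all $i\le\min\{k,s\}$.

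Both directions now follow.
\begin{itemize}
\item If $i\in I\cap H$, then $z_i$ and $w_i$ are parallel, so \eqref{eq:linind2} fails and $P\in\mathrm{cut}(Q)$; thus $(P_{22},P_{33})$ is not GD critical.
\item If $I\cap H=\emptyset$, the vectors in \eqref{eq:linind2} are nonzero eigenvectors of the real symmetric matrix $M$ with eigenvalues $1$, $\{\lambda_i\}_{i\in I}$ and $\{1-\mu_h\}_{h\in H}=\{\lambda_h\}_{h\in H}$. The $\lambda$'s are pairwise distinct and lie in $(0,1)$, so these eigenvalue classes are distinct, apart from the $g$'s, which form a basis of the eigenvalue-$1$ space. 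Eigenvectors of a real symmetric matrix for distinct eigenvalues are orthogonal, so the whole family is linearly independent and $P\notin\mathrm{cut}(Q)$.
\end{itemize}

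The main obstacle I expect is the bookkeeping in the pairing step: showing that every $h\in[\min\{k,s\}]$ is paired with some $i$ in the index range of the $u$'s, and that no $\mu_h$ coincides with $1-\lambda_{i'}$ for $i'\neq h$. I would handle both with genericity and the simplicity of the nonzero, non-one eigenvalues from Lemma~\ref{lem:eigenspaces}, using that the nonzero eigenvalues of $Z_3^\top Z_3$ and $Z_3Z_3^\top$ coincide.
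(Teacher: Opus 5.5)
Your proposal is correct and follows essentially the same route as the paper: since $Z_1$ is empty, $Z_2^\top Z_2+Z_3^\top Z_3=\mathrm{Id}_k$, so every vector in \eqref{eq:linind2} is an eigenvector of $Z_2^\top Z_2$ and $z_i\parallel w_i$ for each singular pair. Choosing both members of a pair therefore forces $P\in\mathrm{cut}(Q)$, while disjoint choices give eigenvectors for distinct eigenvalues, which are linearly independent. Your write-up is somewhat more careful than the paper's: it treats the eigenvalue-$1$ vectors $g_i$ explicitly and checks the pairing against Proposition~\ref{prop:SVD}. The bookkeeping you flag as the main obstacle is routine, because both index ranges equal $[\min\{k,s\}]$ and the nonzero, non-one eigenvalues are simple.
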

\begin{proof}
With the notation above, now $Z = \begin{bmatrix} Z_2 \\ Z_3 \end{bmatrix}$. Both $Z_2^\top Z_2$ and $Z_3^\top Z_3$ are positive semidefinite matrices. Moreover, since $r+s = n$, $s\leq r$, and $k \leq n/2$, we have $k \leq r$. Hence,
for generic $Q$ the $k \times k$ matrix $Z_2^\top Z_2$ has rank $k$, i.e., it has only positive eigenvalues. The rank of $Z_3^\top Z_3$ is $\min\{k,s\}$. Let $u \in \RR^k$ such that $Z_2^\top Z_2 u = \lambda u$. Then $Z_3^\top Z_3 u = u -Z_2^\top Z_2u = (1-\lambda)u$. We already saw that the eigenvalues of $Z_2^\top Z_2$ are at most $1$, and the eigenvalues of $Z_3^\top Z_3$ are less than $1$. It also follows that the eigenspace of $Z_2^\top Z_2$ with eigenvalue $1$ equals the kernel of $Z_3^\top Z_3$ and therefore has dimension $(k-s)_+$. Finally if  $\lambda_i \in (0,1)$ is an eigenvalue of $Z_2^\top Z_2$, then $1-\lambda_i$ is an eigenvalue of $Z_3^\top Z_3$ with the same eigenvector. To each such eigenvector $z_i \in \RR^k$ there correspond eigenvectors $u_i \in \RR^r$ of $Q_{22}$ and $v_i \in \RR^s$ of $Q_{33}$ given by
  \begin{align*}
      u_i = \frac{Z_2z_i}{\sqrt{\lambda_i}}, \quad v_i = \frac{Z_3 z_i}{\sqrt{1-\lambda_i}}\,.
  \end{align*}
  A quick computation shows that $Q\hat{u}_i = \sqrt{\lambda_i}Zz_i$ and $Q\hat{v}_i = \sqrt{1-\lambda_i}Zz_i$. Hence these are not linearly independent and the corresponding projection matrix $P$ lies in the cut locus of $Q$. Therefore such pairs must be avoided in the construction of $P$. If, instead, exactly one of $u_i$, $v_i$ is chosen for every $i$, we obtain an eigenbasis of $Z_2^\top Z_2$ and the vectors are linearly independent. This proves the result. 
\end{proof}
With all these preliminary results, now we are ready to prove Theorem
\ref{thm:prod_Grassmannian}. 
\begin{proof}[Proof of Theorem \ref{thm:prod_Grassmannian}:] 
The case $s = \ell = 0$ follows from Corollary~\ref{cor:small-grass}.
Lemma \ref{lem:ED-critpoints-of-products-of-Grassmannians} immediately implies that the ED degree of $\mathrm{pGr}(j,r) \times \mathrm{pGr}(\ell,s)$ is equal to $\binom{r}{j} \cdot \binom{s}{\ell}$. 
If $n=r+s$, Lemma \ref{lem:n=r+s} implies that the GD critical points are obtained by choosing an index set $H$ of $\ell$ eigenvectors $v_h$ of $Q_{33}$ and choosing the $\delta$ eigenvectors $u_i$ indexed by $I = [\min\{k,r,n-r\}]\backslash H$.
In other words, if $n=r+s$, 
the GD degree of $\mathrm{pGr}(j,r) \times \mathrm{pGr}(\ell,s)$ is equal to $\binom{\min\{k,s\}}{\ell}$.

If $n>r+s$, Lemma \ref{lem:n>r+s} implies that the GD critical points are obtained by choosing $\delta$ eigenvectors $u_i$ of $Q_{22}$ and $\ell$ eigenvectors $v_h$ of $Q_{33}$. So the GD degree in this case is 
\begin{align*}
      \binom{\min\{k,r,n-r\}}{\delta} \cdot \binom{\min\{k,s\}}{\ell}\, .
  \end{align*}
This formula reduces to $\binom{\min\{k,r\}}{j} \cdot \binom{\min\{k,s\}}{\ell}$ if   $n \geq r+k$ and to $\binom{n-r}{\ell} \cdot \binom{s}{\ell}$ otherwise.
\end{proof}

\section{Matroid Varieties}\label{sec:positroids}
A realizable matroid $M$ of rank $k$ on $n$ elements is given by a $k\times n$ matrix $A$ of rank $k$. The matroid $M$ is characterized by the collection of $k\times k$ minors $A_I$ which vanish and those which do not vanish. The \emph{realization space} $\mathcal{M}^\circ$ of the matroid $M$ is the subset 
\[\mathcal M^\circ = \{x \in \mathrm{Gr}(k,n) \subseteq \mathbb P^{\binom n k} :  x_I = 0 \,\, \textrm{if and only if } I \mbox{ is not a basis of } M \}\]
of the Grassmannian in its Pl\"ucker embedding.
The Zariski closure $\mathcal{M}$ is defined as
\[\mathcal M = \overline{\mathcal M^\circ} = \{x \in \mathrm{Gr}(k,n) \subseteq \mathbb P^{\binom n k} :  x_I = 0 \,\, \textrm{if } I \mbox{ is not a basis of } M \}.\]
We refer to the closure $\mathcal M$ as the \emph{matroid variety} of $M$.
Mnëv's Universality Theorem implies that these realization spaces can be arbitrarily complicated. 
In this article, all matroid varieties we consider are irreducible.

Our first result shows that several properties are invariant under matroid isomorphism. 
If $M_1, M_2$ are matroids with ground set $[n]$, then we say $M_1$ and $M_2$ are isomorphic if there is a permutation $\sigma$ of $[n]$ that maps the bases of $M_1$ to the bases of $M_2$.
\begin{lemma}
    If two matroids $M_1$ and $M_2$ are isomorphic, then their matroid varieties $\mathcal{M}_1$ and $\mathcal{M}_2$ have the same degree, ED degree, and GD degree.
\end{lemma}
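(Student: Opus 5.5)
The plan is to realize the matroid isomorphism by a permutation matrix acting on $\mathbb C^n$, and to observe that this action preserves every ingredient in the definitions of degree, ED degree and GD degree. Let $\sigma$ be a permutation of $[n]$ mapping the bases of $M_1$ to the bases of $M_2$, and let $\Pi_\sigma$ be the corresponding permutation matrix. Then $\Pi_\sigma \in \mathrm{O}(n)\cap \mathrm{GL}_n(\RR)$, and conjugation $P \mapsto \Pi_\sigma P \Pi_\sigma^\top$ is a linear automorphism of $\mathrm{Sym}^2\CC^n$ preserving $\mathrm{pGr}(k,n)$. On the Pl\"ucker side, the same permutation acts on $\mathrm{Gr}(k,n)$ by permuting coordinates, $x_I \mapsto \pm x_{\sigma(I)}$, with signs coming from reordering the columns of a representing $k\times n$ matrix. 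Under these identifications the two actions correspond: the row span of $A$ goes to the row span of $A\Pi_\sigma^\top$, and the projection changes by conjugation.

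\textbf{Step 1.} I will check that this automorphism maps $\mathcal M_1^\circ$ onto $\mathcal M_2^\circ$. A point $x$ satisfies $x_I=0$ exactly when $I$ is a nonbasis of $M_1$. Its image satisfies $x'_{\sigma(I)} = \pm x_I$, so $x'_J=0$ exactly when $\sigma^{-1}(J)$ is a nonbasis of $M_1$, that is, when $J$ is a nonbasis of $M_2$. Taking Zariski closures gives $\sigma\cdot\mathcal M_1=\mathcal M_2$.

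\textbf{Step 2 (degree).} The Pl\"ucker action is a signed permutation of coordinates, hence a linear automorphism of $\PP^{\binom nk-1}$. Linear automorphisms preserve degree, so $\deg\mathcal M_1=\deg\mathcal M_2$.

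\textbf{Step 3 (ED and GD degrees).} Conjugation by $\Pi_\sigma$ is an orthogonal linear map of $\mathrm{Sym}^2\CC^n$, and it satisfies $\mathrm{trace}(\Pi P\Pi^\top\,\Pi Q\Pi^\top)=\mathrm{trace}(PQ)$. It therefore carries
\begin{itemize}
\item smooth points of $\mathcal M_1$ to smooth points of $\mathcal M_2$ and tangent spaces to tangent spaces, so critical points of \eqref{eq:trace-opt} for $(\mathcal M_1,Q)$ correspond bijectively to those for $(\mathcal M_2,\Pi Q\Pi^\top)$;
\item $\mathrm{cut}(Q)$ onto $\mathrm{cut}(\Pi Q\Pi^\top)$, since $\ker$ and $\mathrm{im}$ transform by $\Pi$.
\end{itemize}
Consequently the ED correspondence and the GD correspondence $\mathcal I_{\mathcal M_1}$ are mapped isomorphically onto those of $\mathcal M_2$, compatibly with the projections to data space. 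Since $Q\mapsto \Pi Q\Pi^\top$ is an automorphism of $\mathrm{Sym}^2\CC^n$ and of $\mathrm{pGr}(k,n)$, it sends generic data to generic data. Generic fiber cardinalities therefore agree, which gives equal ED degrees and equal GD degrees. The same holds when either number is infinite.

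The main point needing care is the compatibility between the Pl\"ucker action and the projection-matrix action, including signs. I will handle it directly through $P=A^\top(AA^\top)^{-1}A$: replacing $A$ by $A\Pi^\top$ gives $\Pi P\Pi^\top$. This avoids tracking signs in the cocircuit matrix, because the signs in $x_I$ cancel in $XX^\top$. The rest is formal.
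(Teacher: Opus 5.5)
Your proposal is correct and follows the same route as the paper. A signed permutation of Pl\"ucker coordinates gives equality of degrees, and conjugation by the orthogonal permutation matrix, via $P = A^\top(AA^\top)^{-1}A$ together with the $\mathrm{O}(n)$-equivariance of the problem, gives equality of ED and GD degrees; you additionally spell out details the paper leaves implicit, such as matching the realization spaces, the cut loci and the correspondences.
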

\begin{proof}
    A matroid isomorphism induces a permutation on the Pl\"ucker coordinates. The first result follows from the fact that degree is invariant under linear change of coordinates. 
    For the ED and GD degree, we consider the image of the matroid variety in projection coordinates. 
    If $A_1$ is a matrix realizing $M_1$, we permute the columns by $\sigma$ to obtain a matrix $A_2$ realizing $M_2$.
    The same statement holds for projection matrices with simultaneous permutation of rows and columns. 
    Since permutation matrices are orthogonal, and the ED and GD optimization problems are $\mathrm{O}(n)$-equivariant, the result follows. 
\end{proof}
In the next subsections, we present results on the ED and GD degrees of matroid varieties of rank $2$ on $n$ elements and matroid varieties of rank $3$ on $6$ elements.
All of these matroids are realizable, and their matroid varieties are irreducible \cite[Proposition 4.2]{CL}.

\subsection{Rank Two Matroids}
A matroid $M$ of rank $2$ is determined by a set $L \subseteq [n]$ of \emph{loops} and a set partition $S_1, S_2, \ldots, S_r$  of $[n] \backslash L$ where $r\geq 2$; the sets $S_1, \ldots, S_r$ are called \emph{parallel classes}.
The bases of $M$ are two-element sets
 $\{a,b\}$  with $a \in S_i$ and $b \in S_j$ for $i \neq j$; see \cite[Section 1.5]{Oxley}.
 
 We consider these matroids up to isomorphism.
 Let $S_1, \ldots, S_r, L$ be the data of a rank $2$ matroid on $n$ elements and set $\pi_i = |S_i|$ for $i = 1, \ldots, r$.
 Since we are working up to isomorphism, we may assume that $\pi_1 \geq \cdots \geq \pi_r$. 
 We use as a representative the matroid with parallel classes $S_1 = \{1, \ldots, \pi_1\}$ and $S_i = \{\sum_{j=1}^{i-1}\pi_{j} + 1, \ldots, \sum_{j=1}^{i}\pi_{j}\}$ for $i = 2, \ldots, r$ and loops 
 $L = \{\sum_{j=1}^{r}\pi_{j} + 1, \ldots, n\}$. 
  We therefore identify an equivalence class of matroids with the integer $\ell = |L|$ and the integer partition $\pi \vdash n - \ell$.
 We denote this matroid representative by $M_{\pi, \ell}$ and we write $M_\pi = M_{\pi, 0}$.

 The matroid $M_{\pi, \ell}$ is realized by the $2 \times n$ matrix $A = A'\cdot U$ where 
 \begin{align}\label{eq:matroid-param}
 A' = 
     \begin{bmatrix}
         t_1 & \cdots & t_r \\
         s_1 & \cdots & s_r\\
     \end{bmatrix},
    && U = 
     \left [\begin{array}{ccccccccccccccccc}
    u_1^1 & u_2^1 & \cdots & u_{\pi_1}^1 & & & & & & 0\\
     & & & & \ddots & & & & & &\ddots\\
     & & & & & u_1^r & u_{2}^r & \cdots & u_{\pi_r}^r& & & 0\\
\end{array} \right ].
 \end{align}
  We write $\mathcal M_{\pi, \ell}$ for the corresponding matroid variety. Before turning to the ED and GD optimization problems, we first describe the dimension, degree, and singular locus of $\mathcal{M}_{\pi, \ell}$. 
 \begin{proposition} \label{prop:matroid-degree}
    The variety $\mathcal M_{\pi, \ell} \subseteq \mathrm{Gr}(2, n)$ has  dimension $n - \ell + r - 4$.
    Its degree is 
    \begin{align*}
        \sum_{j_1 = 0}^{\lfloor (\pi_1 - 1)/2\rfloor}
        \cdots 
        \sum_{j_r = 0}^{\lfloor (\pi_r - 1)/2\rfloor}
        (-1)^j
        C_{n - \ell - j - 2} 
        \prod_{i=1}^r
        \left (\!\! \binom{\pi_i}{j_i}\!\!\right )
    \end{align*}
    where $j = j_1 + \cdots + j_r$ and $C_{m} = \frac{1}{m+1}\binom{2m}{m}$ is the $m$-th Catalan number. 
\end{proposition}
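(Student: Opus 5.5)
We would prove the two claims separately: the dimension by a parameter count on the realization \eqref{eq:matroid-param}, and the degree by computing the Hilbert polynomial of $\mathcal M_{\pi,\ell}$ with standard monomials, as in the proof of Proposition~\ref{prop:pluecker}.

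\textbf{Step 1 (dimension).} The loops only contribute zero columns. So $\mathcal M_{\pi,\ell}$ is isomorphic to $\mathcal M_{\pi,0}\subseteq \mathrm{Gr}(2,n-\ell)$, embedded in a coordinate subspace of $\mathbb P^{\binom n2-1}$. This preserves dimension and degree, so we may put $N:=n-\ell$ and assume $\ell=0$.

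A point of the realization space is a configuration of $N$ points of $\mathbb P^1$ in which all elements of $S_i$ sit at a common point $[t_i:s_i]$, and these $r$ points are pairwise distinct. The map $(A',U)\mapsto \mathrm{rowspan}(A'U)$ has $2r+N$ parameters. Its generic fiber has dimension $4+r$: the group $\mathrm{GL}_2$ acts on the left, and the torus $(\mathbb C^*)^r$ rescales the $i$-th column of $A'$ against the $i$-th row of $U$. This fiber description follows because $A'U$ determines the points $[t_i:s_i]$ and the scalars $u^i_a$ up to exactly these symmetries. Hence the dimension is $2r+N-4-r=N+r-4=n-\ell+r-4$.

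\textbf{Step 2 (ideal and standard monomials).} By definition, $\mathcal M_\pi=\mathrm{Gr}(2,N)\cap\{x_{ab}=0 : a,b\in S_i \text{ for some } i\}$. We would show the following.
\begin{itemize}
\item[(a)] The ideal $I_{\mathrm{Gr}(2,N)}+\langle x_{ab}: a,b \text{ in a common class}\rangle$ is prime.
\item[(b)] A vector space basis of its quotient ring is given by the standard monomials $x_{J_1}\cdots x_{J_d}$ with $J_1\le\cdots\le J_d$ and no $J_i$ inside a single class.
\end{itemize}
This works because the classes are intervals of $[N]$. Under the diagonal term order, the variables $x_{ab}$ with $a<b$ in one interval should form an order ideal compatible with the straightening relations: straightening a product never introduces a "long" pair from two "short" ones. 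We would check this exactly as in Proposition~\ref{prop:pluecker}, mapping the quotient injectively into the coordinate ring of the parametrization $x_{ab}=u_au_b\det(A'_{\cdot i},A'_{\cdot j})$ and verifying that the images of these standard monomials are linearly independent.

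\textbf{Step 3 (counting).} The degree is then $(\dim\mathcal M_\pi)!$ times the leading coefficient of
$$h(d)=\#\{\text{semistandard $2\times d$ tableaux on $[N]$ with no column inside a class}\}.$$
With no restriction, the leading coefficient is that of $\mathrm{Gr}(2,N)$, whose degree is $C_{N-2}$. We would encode a tableau as a lattice word and apply inclusion–exclusion over the forbidden columns. Forbidden columns in class $S_i$ occupy a consecutive block of the alphabet. Contracting $j_i$ "internal" forbidden steps in block $i$ behaves like deleting $j_i$ letters from the alphabet, which changes $C_{N-2}$ into $C_{N-j-2}$. The multiplicity of such contractions is the multiset count $\left(\!\binom{\pi_i}{j_i}\!\right)$, and only $j_i\le\lfloor(\pi_i-1)/2\rfloor$ contributes to the top-degree term. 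Summing with signs $(-1)^{j}$ should give the stated formula.

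The main obstacle is this last combinatorial step: identifying the leading term of the inclusion–exclusion and pinning down the ranges and multiplicities. Before trying a bijective proof, we would first test the formula on small cases. For $r=N$ it gives $C_{N-2}$. For a single nontrivial class of size $3$ it gives $C_{N-2}-3C_{N-3}$, which should be cross-checked against direct Macaulay2 computations.

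If Step 2(a) proves hard, the fallback is to compute the degree of the closure of the image of the monomial-type parametrization. That degree does not need primality of the linear section, since $\mathcal M_\pi$ is irreducible by \cite[Proposition 4.2]{CL}. It suffices to show that the section is generically reduced, by a Jacobian computation at a general point of $\mathcal M_\pi^\circ$.
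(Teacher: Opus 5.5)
Your Step 1 and the reduction to $\ell=0$ are fine and match the paper. The paper fixes $t_1=s_r=1$, $s_1=t_r=0$, $u^i_1=1$ instead of quotienting by $\mathrm{GL}_2\times(\mathbb C^*)^r$, but the count is the same. The degree argument, however, fails at Step 2(b). A basis of allowed standard monomials exists in the Schubert/Richardson situation, where the forbidden set is a union of a lower and an upper Bruhat order ideal (e.g.\ $r=2$). But the relevant question is not whether straightening creates ``long'' pairs. It is whether a product containing a forbidden variable straightens into standard monomials that avoid the forbidden set, and here it does. Take $\pi=(2,2,1)$, $N=5$, with forbidden $x_{12},x_{34}$. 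The Pl\"ucker relation $x_{23}x_{45}-x_{24}x_{35}+x_{25}x_{34}=0$ gives $x_{23}x_{45}=x_{24}x_{35}$ in the quotient. These are two distinct allowed standard monomials, and they restrict to the same nonzero function under \eqref{eq:matroid-param}, so the independence check you propose fails. Worse, the allowed set contains the chain $13<14<15<25<35<45$. Your basis would then force a Hilbert polynomial of degree at least $5$, while $\dim\mathcal M_{(2,2,1)}=4$. So the tableau count $h(d)$ in Step 3 is not the Hilbert function of $\mathcal M_\pi$.

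Step 3 is also not yet an argument: the contraction heuristic and the multiplicity $\left(\!\binom{\pi_i}{j_i}\!\right)$ are read off from the target. Your sanity check would in fact have flagged the target itself. For $\pi=(3,1)$, $N=4$, the sum is $C_2-3C_1=-1$, whereas $\mathcal M_{(3,1)}=\{V: e_4\in V\}$ is a linear $\mathbb P^2$ of degree $1$.

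The paper bypasses Hilbert functions altogether:
\begin{itemize}
\item The class $S_i$ imposes the Schubert condition $V\cap\mathbb C^{[N]\setminus S_i}\neq 0$, of class $\sigma_{\pi_i-1}=h_{\pi_i-1}(x,y)$. Hence $[\mathcal M_\pi]=\prod_i h_{\pi_i-1}$, citing Pratt--Sturmfels for the intersection.
\item The recursion $h_m=h_1h_{m-1}-e_2h_{m-2}$ rewrites this in terms of $h_1=\sigma_1$ and $e_2=\sigma_{11}$.
\item Finally, $e_2^j=[\mathrm{Gr}(2,N-j)]$ paired with $h_1^{2(N-j-2)}$ gives $C_{N-j-2}$.
\end{itemize}
Done carefully, the recursion gives $h_m=\sum_j(-1)^j\binom{m-j}{j}h_1^{m-2j}e_2^j$. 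So the correct coefficient is $\binom{\pi_i-1-j_i}{j_i}$, not the multiset coefficient printed in the statement. The paper's unraveling has the same slip, and its inner sum should start at $j_i=0$. With this correction, $\pi=(3,1^{N-3})$ gives $C_{N-2}-C_{N-3}$, the degree of $\sigma_2$, as it must. This Chow-ring computation is the route to take, with the coefficient corrected.
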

\begin{proof}
    The parametrization \eqref{eq:matroid-param} is injective in an affine open of $\mathcal M_{\pi, \ell}$ after fixing $t_1 = s_r = 1$, $s_1 = t_r = 0$, and $u_1^i = 1$ for $i \in [r]$.
    The parameter count $n - \ell + r-4$ gives the dimension. 

    Since lifting a projective variety to a higher dimensional ambient space does not alter the degree, it suffices to consider the loopless case and replace $n$ with $n - \ell$ in the final formula. 

    We work in the Chow ring of $\mathrm{Gr}(2,n)$ \cite[Chapter 4.3]{3264}.
    The class of $\mathcal M_\pi$ is
    \begin{align*}
        [\mathcal M_\pi] = 
        h_{\pi_1 - 1}(x,y) \cdots h_{\pi_r-1}(x,y)
    \end{align*}
    where $h_i(x,y) = x^i + x^{i-1}y + \cdots + y^i$.
    This follows from the observation that $\mathcal M_\pi$ is the intersection of certain Schubert varieties; see the proof of \cite[Proposition 4.4]{PS}.
    We determine the degree by writing $[\mathcal M_\pi]$ in terms of the classes of smaller Grassmannians intersected with linear spaces.
    The functions $h_i$ satisfy the recursive relationship $h_i(x,y) = h_{1}(x,y)h_{i-1}(x,y) - e_{2}(x,y)h_{i-2}(x,y)$
    where $e_2(x,y) = xy$.
    Unraveling this recursion yields 
    \[h_{\pi_i-1} = \sum_{j_i = 1}^{\lfloor (\pi_i-1)/2\rfloor} 
    (-1)^{j_i}
    \left (\!\!\binom{\pi_i}{j_i} \!\!\right )
    h_1^{\pi_i-2j_i-1}
    e_2^{j_i}.    
    \]
    Writing $j = j_1 + \cdots + j_r$, the class $[\mathcal M_\pi]$ expands as 
    \begin{align*}
        [\mathcal M_\pi] = 
         \sum_{j_1 = 0}^{\lfloor (\pi_1 - 1)/2\rfloor}
        \cdots 
        \sum_{j_r = 0}^{\lfloor (\pi_r - 1)/2\rfloor}
        (-1)^j
        e_2^jh_1^{n - r - 2j}
        \prod_{i=1}^r
        \left (\!\!\binom{\pi_i}{j} \!\!\right ).
    \end{align*}
    The class of the Grassmannian $\mathrm{Gr}(2,n-j)$ is $[\mathrm{Gr}(2,n-j)] =  \sigma_{(j,j)}  = e_2^j$.  Here $h_1(x,y) = x+y$ is the hyperplane section class.
    Since intersecting with $n - r - 2j$ generic hyperplanes preserves the degree of $\mathrm{Gr}(2,n-j)$, the degree formula follows from $\deg(\mathrm{Gr}(2,n-j)) = C_{n - j - 2}$.
\end{proof}

The singular loci of matroid varieties are complicated. 
For instance, while much work has been done to classify smooth matroid varieties (e.g. \cite{CL}), the classification of the singular loci of matroid varieties is largely still an open problem.
Here, we give an explicit description in the case of rank $2$ matroid varieties.
Let $\hat \pi_i = (\pi_1, \ldots ,\pi_{i-1}, \pi_{i+1}, \ldots, \pi_r)$.
\begin{proposition} \label{prop:singular-rank-2} 
Let $\pi = (\pi_1, \ldots, \pi_r) \vdash n - \ell$.
If $r = 2$, then $\mathcal M_{\pi, \ell}$ is smooth. 
If $r \geq 3$, the singular locus of $\mathcal M_{\pi, \ell}$ is isomorphic (via matroid isomorphism) to
\begin{align*}
    \bigcup _{i \in [r], \pi_i \geq 2} \mathcal M_{\hat \pi_i, \ell + \pi_i}.
\end{align*}
The singular locus has an analogous description in projection coordinates.
\end{proposition}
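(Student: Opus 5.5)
For $r=2$ I would identify $\mathcal M_{\pi,\ell}$ with a product of projective spaces and use that products of smooth varieties are smooth. Normalize the parametrization \eqref{eq:matroid-param} so that $A'=\mathrm{Id}_2$. A point of the open realization space is then the row span of
\[
\begin{bmatrix} u^1 & 0 & 0\\ 0 & u^2 & 0\end{bmatrix},\qquad u^1\in\CC^{\pi_1},\ u^2\in\CC^{\pi_2},
\]
with the last $\ell$ columns zero. The rows span $\mathrm{span}(u^1)\oplus\mathrm{span}(u^2)$, so $\mathcal M_{\pi,\ell}\cong\PP^{\pi_1-1}\times\PP^{\pi_2-1}$. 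In the notation of Section~\ref{sec:products} this is exactly $\mathrm{pGr}(1,\pi_1)\times\mathrm{pGr}(1,\pi_2)$. Proposition~\ref{prop:pluecker} then says the Plücker ideal is the Segre ideal plus linear forms, and this variety is smooth.

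For $r\ge3$ I would first give an intrinsic description. A point of $\mathcal M_{\pi,\ell}$ is a $2$-plane $V$ with row space of a $2\times n$ matrix $A$ whose columns $a_1,\dots,a_n\in\CC^2$ satisfy two conditions: the loop columns vanish, and the columns indexed by each $S_i$ are pairwise parallel. The closed condition is that all Plücker coordinates $x_{ab}$ vanish for $a,b$ in the same class or $a\in L$. This gives the set-theoretic description
\[
\mathcal M_{\pi,\ell}=\{V\in\mathrm{Gr}(2,n): \text{columns in } L \text{ vanish},\ \text{each } S_i \text{ spans a line or zero}\}.
\]
Consider the locus $Z_i$ where all columns of $S_i$ are zero. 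It is exactly the matroid variety of the matroid in which $S_i$ becomes loops, i.e.\ $\mathcal M_{\hat\pi_i,\ell+\pi_i}$ up to relabeling. Its dimension is $n-\ell-\pi_i+(r-1)-4$, so when $\pi_i=1$ it has codimension $1$ and is not new. I would then prove two things: points outside $\bigcup_{\pi_i\ge2}Z_i$ are smooth, and points of $Z_i$ with $\pi_i\ge2$ are singular.

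\emph{Smoothness away from the $Z_i$.} Such a point has every class $S_i$ with $\pi_i\ge2$ spanning an actual line $\ell_i\subset\CC^2$. Locally the variety is then parametrized by choosing, for each class, a line in $\CC^2$ together with coefficients. Precisely, the map from the open set of $(A',U)$ with all $u^i\neq0$, modulo $\mathrm{GL}_2$ and the torus rescalings $(t_i,s_i,u^i)\mapsto(\lambda t_i,\lambda s_i,\lambda^{-1}u^i)$, is a smooth quotient. I would check that the differential has full rank $n-\ell+r-4$, possibly by a direct Jacobian computation in a Plücker chart $x_{ab}=1$ with $a\in S_1$ and $b\in S_2$. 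Points where some singleton class column is zero also lie in this open smooth description, because a singleton imposes no parallelism condition there.

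\emph{Singularity along $Z_i$ with $\pi_i\ge2$.} Take a general point of $Z_i$ and work in a chart $x_{ab}=1$ with $a,b$ in other classes; this uses $r\ge3$, which guarantees that two other classes exist. Near this point the columns of $S_i$ are free vectors in $\CC^2$, subject only to being pairwise parallel. So locally the variety is a product of a smooth germ with the cone of $2\times\pi_i$ matrices of rank $\le1$. That cone is singular at $0$ precisely when $\pi_i\ge2$, which yields singularity along $Z_i$. The projection-coordinate statement then follows because the map from the Plücker to the projection description is an isomorphism on the relevant open sets, identifying the two descriptions (the isomorphism from \cite{DFRS}).

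The main obstacle is the local product structure: verifying that the chart isomorphism genuinely splits off the rank-$\le1$ determinantal cone. I would handle this by explicitly writing the chart coordinates $B$ (as in Section~\ref{sec:EDopt}) and observing that the defining equations separate by class.
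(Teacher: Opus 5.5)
Your proposal works, provided you let the chart computation from your step (2) carry the whole proof. Its core is a chart-level version of the paper's argument rather than the same argument.

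The paper works upstairs with the $\mathrm{GL}_2$-invariant variety $V_\pi^\circ=\prod_i V_{\pi_i}\cap\{\mathrm{rank}=2\}\subseteq\CC^{2\times n}$, where $V_{\pi_i}$ is the cone of $2\times\pi_i$ matrices of rank at most one. It uses that $V_\pi^\circ\to\mathcal M_\pi$ is a smooth morphism (a base change of the principal $\mathrm{GL}_2$-bundle of full-rank matrices over $\mathrm{Gr}(2,n)$). Hence singular points correspond, and the singular locus is read off factor by factor.

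Your chart $x_{ab}\neq 0$ with $a\in S_p$, $b\in S_q$, $p\neq q$, normalized so that columns $a,b$ are $e_1,e_2$, is a slice of that bundle. In it the variety is literally
\[
\CC^{\pi_p-1}\times\CC^{\pi_q-1}\times\prod_{m\neq p,q}V_{\pi_m},
\]
with loop columns zero. Both proofs therefore rest on $V_{\pi_m}$ being singular exactly at $0$ when $\pi_m\geq 2$. The paper's route is chart-free and transfers verbatim to projection coordinates through the smooth map $A\mapsto A^\top(AA^\top)^{-1}A$. Yours is more elementary, needing only that a product is singular at a point iff some factor is. Your Segre identification for $r=2$ is a nice link to Section~\ref{sec:products}, and your treatment of projection coordinates via the isomorphism $\{\mathrm{trace}(XX^\top)\neq 0\}\cong\mathrm{pGr}(2,n)$ is fine.

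Three repairs are needed.

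\emph{Step (1) does not prove smoothness as phrased.} A morphism from a smooth variety with injective differential, even a bijective one, can have singular image: parametrize the nodal cubic by $\mathbb{A}^1$ with one preimage of the node removed. So a full-rank Jacobian would have to be supplemented by properness or an explicit local inverse.

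\emph{You do not need step (1) at all.} Every point of $\mathcal M_{\pi,\ell}$ has two non-parallel nonzero columns, necessarily in different classes $S_p,S_q$, so it lies in a chart of the form above. There it is singular iff some block $A_m$ with $m\neq p,q$ and $\pi_m\geq 2$ vanishes. This single computation gives both inclusions for $r\geq 3$, and also the case $r=2$, where no determinantal factors remain. The chart must be chosen depending on the point, though. For $r\geq 3$, all $x_{ab}$ with $a\in S_1$, $b\in S_2$ vanish at points where $S_1$ and $S_2$ span the same line, or where one of them is a singleton whose column is zero.

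\emph{A harmless arithmetic slip.} $\operatorname{codim} Z_i=\pi_i+1$, so for a singleton class $Z_i$ has codimension $2$, not $1$.
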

\begin{proof}
It suffices to prove the case $\ell = 0$.
Let $V_{\pi_i} \subseteq \mathbb C^{2 \times \pi_i}$ denote the affine variety of matrices with rank $\leq 1$. 
We define
\[V_\pi = \prod_{i = 1}^r V_{\pi_i} = 
\{[A_1 \cdots A_r] \in \mathbb C^{2 \times n} : A_i \in V_{\pi_i} \textrm{ for } i = 1, \ldots, r\}
\subseteq \mathbb C^{2 \times n}\] 
and let $V_\pi^\circ = \{A \in V_\pi: \mathrm{rank}(A) = 2\}$ be the open subset of full rank matrices in $V_\pi$.
Then the matroid variety $\mathcal{M}_{\pi}$ is the closure of the image of $V_\pi^\circ$ under the Pl\"ucker embedding.
Because the Pl\"ucker embedding is a smooth morphism on $\CC^{2 \times n}$ of relative dimension $4$ and $V_\pi^\circ$ is invariant under the action of $\mathrm{GL}(2)$, the embedding is still a smooth morphism when restricted to $V_\pi^\circ$. 
Thus the image of the singular locus of $V_\pi^\circ$ is precisely the  singular locus of the image. 
It therefore suffices to characterize the singular locus of $V_\pi^\circ$.
For the last statement, we remark that the map $A \mapsto A^\top (AA^\top)^{-1} A$ is also a smooth morphism on $V_\pi^\circ$ of relative dimension $4$, and so the same argument holds for the matroid variety in $\mathrm{pGr}(k,n)$.

If $\pi_i = 1$, then $V_{\pi_i} = \CC^2$ is smooth. 
If $\pi_i \geq 2$, then $V_{\pi_i}$ has precisely one singular point, namely the zero matrix. 
Thus a point $A = [A_1\,\, \cdots\,\,  A_r]$ in $V_\pi^\circ$ is singular if and only if $A_i = 0$ for some $i \in [r]$ with $\pi_i \geq 2$. 
This matches the claim.
If $r = 2$, then $A_1 = 0$ implies that $[A_1 \,\, A_2]$ has rank $1$ and is therefore not in $V_\pi^\circ$, so $V_\pi^\circ$ is smooth in this case.
\end{proof}

As an application of this lemma, we show that the ED degree of rank 2 matroid varieties exhibit a monotonicity property. 
\begin{theorem} \label{thm:monotonicity} Let $M_{\pi, \ell}$ and $M_{\pi', \ell'}$
be rank $2$ matroids on $[n]$ and $[n']$, respectively, such that 
$\pi_i \geq \pi_i'$ for all $\pi_i' \in \pi'$.
Then the ED degree of $\mathcal{M}_{\pi, \ell}$ is at least the ED degree of $\mathcal{M}_{\pi', \ell'}$.
\end{theorem}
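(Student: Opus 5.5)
The plan is to realize $\mathcal M_{\pi',\ell'}$ as a degeneration, or as a linear section, of $\mathcal M_{\pi,\ell}$, and then invoke the standard semicontinuity of ED degree: the number of isolated critical points cannot jump up under specialization. The hypothesis must be read with $\pi'$ indexed so that $\pi_i\ge\pi'_i$ for $i=1,\dots,r'$. This forces $r'\le r$ in the relevant comparison, so that each parallel class of $\pi'$ fits inside a class of $\pi$. First, by the lemma on matroid isomorphism together with padding by loops, I will reduce to a common ground set. Adding loops only appends zero columns. In projection coordinates this embeds the variety into a larger $\mathrm{pGr}(2,n)$ as the block $\begin{bmatrix}P&0\\0&0\end{bmatrix}$. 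For generic data, the objective $\mathrm{trace}(PQ)$ sees only the corresponding principal block of $Q$, which is again generic, so the ED degree does not depend on $\ell$. We may therefore assume $\ell=\ell'=0$, after viewing both matroids on $\max(n,n')$ elements with extra loops.

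Next I will show that $\mathcal M_{\pi'}$ is isomorphic, up to a coordinate subspace embedding, to the intersection of $\mathcal M_{\pi}$ with the coordinate subspace obtained by making the surplus elements of each class loops. Concretely, in parametrization \eqref{eq:matroid-param}, set the $u^i_m$ with $m>\pi'_i$ to zero, and set all of $u^i$ to zero for $i>r'$. This yields exactly the realizations of $M_{\pi',\ell''}$ on the ground set of $M_\pi$. Hence $\mathcal M_{\pi',\ell''}\subseteq\mathcal M_\pi$ is a subvariety cut out by the vanishing of some columns.

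The key step, which I expect to be the main obstacle, is to compare the critical points. I will try an explicit description via the parametrization. Write $A=A'U$; orthonormalizing each row block of $U$ shows that $\mathrm{trace}(PQ)$ depends on $U$ only through the unit vectors $\hat u^i=u^i/\|u^i\|$. Critical points in the $u^i$ directions then force $\hat u^i$ to be a critical point of a quadratic form on the sphere, i.e., an eigenvector problem with $\pi_i$ solutions. This suggests that the ED degree factors as (number of eigenvector choices, $\prod\pi_i$-type) times (critical points of a reduced problem on $\mathrm{Gr}(2,r)$ with data depending on the chosen eigenvectors). It matches $\pi_1\pi_2$ when $r=2$, where the reduced problem is a point. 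If that factorization holds, monotonicity follows at once: the reduced count for fixed $r$ is a generic constant $c_r$, and increasing $\pi_i$ increases the number of eigenvector choices. The case $r'<r$ would require $c_{r'}\prod\pi'_i\le c_r\prod\pi_i$, which I would get from the semicontinuity argument above by letting the extra classes degenerate. If the factorization fails, I fall back on a general principle: for $Q$ generic, critical points of the subvariety $\mathcal M_{\pi'}$ (a coordinate section by a subspace) deform to distinct critical points of a one-parameter family interpolating toward $\mathcal M_\pi$. This requires checking that none escape into the singular locus described in Proposition~\ref{prop:singular-rank-2}, or into the isotropic locus $\{\mathrm{trace}(XX^\top)=0\}$; that verification is the delicate point.
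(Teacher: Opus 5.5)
The gap is at exactly the step you flag as the key one, and neither of your two routes closes it. First, the factorization is false. In the chart $u^i(u^i)^\top=1$ one has $P=U^\top P'U$, so the objective is $\mathrm{trace}(P'UQU^\top)=\sum_{i,j}P'_{ij}\,u^iQ_{ij}(u^j)^\top$. For $r\geq 3$ the off-diagonal entries of $P'\in\mathrm{pGr}(2,r)$ are generically nonzero. So, as a function of $u^i$, the objective is a quadratic form \emph{plus} the linear term $2\sum_{j\neq i}P'_{ij}\,u^iQ_{ij}(u^j)^\top$. Stationarity in $u^i$ is therefore an inhomogeneous equation coupled to $P'$ and to the other blocks, not an eigenvector equation for $Q_{ii}$. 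The decoupling into $\sum_i u^iQ_{ii}(u^i)^\top$ happens only for $r=2$ (where $P'=\mathrm{Id}_2$) or for rank-$2$ data. That is why it appears in the paper's GD computation and not in the ED one. Table~\ref{tab:placeholder} rules out any formula of your shape:
\begin{itemize}
\item $\pi=31^2$ and $\pi=41^2$ have ED degrees $16$ and $22$, which are not divisible by $\prod_i\pi_i=3$ and $4$;
\item $\mathcal M_{1^3}=\mathrm{pGr}(2,3)$ forces $c_3=3$, which predicts $6$ rather than $10$ for $\pi=21^2$.
\end{itemize}

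Second, the semicontinuity fallback does not apply as stated. The inclusion $\mathcal M_{\pi',\ell''}\subseteq\mathcal M_\pi$ is a lower-dimensional subvariety, not the special fiber of a flat family whose general fiber is $\mathcal M_\pi$. ED degree is also not monotone under inclusion: $\mathcal M_{2^3}\subseteq\mathrm{Gr}(2,6)$ has ED degree $168>15$. The interpolating family you invoke is never specified, and that is where all the content lies.

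The missing idea is to specialize the \emph{data} rather than the variety.
\begin{itemize}
\item Reduce to adding a single element, i.e., one new column at position $\hat c$. For generic $Q$, let $\hat Q$ be $Q$ with a zero row and column inserted at $\hat c$.
\item Then $\mathrm{trace}(XX^\top)$ and $\mathrm{trace}(XX^\top\hat Q)$ on $\mathcal M_\pi$ differ from their counterparts on $\mathcal M_{\pi'}$ by terms quadratic in the new parameters.
\item Take a critical point of $\mathcal M_{\pi'}$ for $Q$ and extend it by a zero column. At this point both functions, and their gradients in the old parameters, agree with the old ones, and their derivatives in the new parameters vanish. So the extension is critical on $\mathcal M_\pi$ for $\hat Q$.
\item For generic $Q$ these critical points have no zero column. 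Hence, by Proposition~\ref{prop:singular-rank-2}, the extensions are smooth points of $\mathcal M_\pi$. Also $AA^\top$ is unchanged, so the isotropic locus is never an issue.
\end{itemize}
Your principle that the number of isolated critical points cannot go up under specialization now applies legitimately, to $Q\to\hat Q$. It yields $\mathrm{ED}(\mathcal M_\pi)\geq\mathrm{ED}(\mathcal M_{\pi'})$. Turning surplus elements into loops is the right geometric picture. What it lacks is this choice of $\hat Q$, which makes the extra tangent directions automatically stationary.
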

\begin{proof}
It suffices to prove the statement for the case when $\ell = \ell' = 0$ and   $\pi_i = \pi_i' + 1$ for exactly one $i$ and $\pi_j= \pi_j'$ otherwise where $\pi_i'$ may be zero. 
We use the parametrization \eqref{eq:matroid-param} on the open chart $u_{1}^i = 1$.
If $\pi_i'\neq 0$, then the parametrization matrix  $A_{\pi}$ is obtained from $A_{\pi'}$ 
by adding the column $\begin{bmatrix} u^i_{\pi_i'+1} t_i & u^i_{\pi_i'+1} s_i \end{bmatrix}^\top$ to the end of block $i$. 
If $\pi_i' = 0$ and $\pi$ has one more part than $\pi'$, then we append a new column $\begin{bmatrix}  t_r & s_r \end{bmatrix}^\top$ at the end of $A_{\pi'}$ to obtain $A_{\pi}$. If the new column is inserted between column $c$ and $c+1$, we 
call this position $\hat{c}$. Then 
the cocircuit matrix
$X_{\pi}$ is obtained from $X_{\pi'}$
by inserting a new column and row at position $\hat{c}$.

Let $Q$ be a generic $n \times n$ symmetric matrix and let $\hat{Q}$ be the symmetric matrix obtained from $Q$
by inserting a column and row of zeros at position $\hat{c}$. 
Let $C_{\pi'}$ be a $2 \times n$ matrix which is critical for \eqref{eq:trace-opt} for the model $\mathcal{M}_{\pi'}$ and data $Q$. 
Let $f_{\pi'}  =\mathrm{trace}(X_{\pi'}^2)$
and $g_{\pi'} = \mathrm{trace}(X_{\pi '}^2 Q)$.
Since $C_{\pi'}$ is critical, the Jacobian matrix \eqref{eq:crit-matrix} 
\begin{align*}
    \begin{bmatrix}
        g_{\pi'}
        & \nabla_t g_{\pi'} & \nabla_s g_{\pi'}
        & \nabla_{u} g_{\pi'}\\
        f_{\pi'}
        & \nabla_t f_{\pi'} & \nabla_s f_{\pi'}
        & \nabla_{u} f_{\pi'}
    \end{bmatrix}
\end{align*}
has rank at most one when evaluated at $C_{\pi'}$. 
Here, $t = (t_1, \ldots, t_{r'})$, $s = (s_1, \ldots, s_{r'})$, and $u = (u_{2}^1, \ldots, u_{\pi_{r'}}^{r'})$.
We first claim that the $2 \times (n+1)$ matrix  $\hat{C}_{\pi}$ obtained from $C_{\pi'}$ by setting the new parameters 
in column $\hat{c}$, namely $u^i_{\pi_i'+1}$ or $t_r$ and $s_r$, equal to zero is a critical point for $\hat Q$ on $\mathcal{M}_{\pi}$.
For this, we show that the corresponding Jacobian matrix evaluated at $\hat{C}_{\pi}$ also has rank at most one.
We compute $f_\pi  = \mathrm{trace}(X_\pi^2) = \mathrm{trace}(X_{\pi'}^2) + h = f_{\pi'} + h$ where  
\begin{equation}\label{eq:h}
    h = (u^i_{\pi_i'+1})^2 h' \textrm{ for some } h' \in \mathbb R[t,s,u] \textrm{ if }\pi_i'\neq 0
    \quad \textrm{ or } \quad  
    h \in (\RR[t,s,u])[s_r, t_r]_2 \textrm{ if }\pi_i'= 0.
\end{equation}
This implies that, evaluated at $\hat{C}_\pi$,
\[ [f_\pi \,\, \nabla_t f_\pi \,\, 
\nabla_s f_\pi \, \, \nabla_u f_\pi]  = [f_{\pi'} \,\, \nabla_t f_{\pi'} \,\, 
\nabla_s f_{\pi'} \, \, \nabla_u f_{\pi'}].\]
Moreover, $\frac{\partial f_\pi}{\partial u^i_{\pi_i'+1}} =0$
in the first case, and $\frac{\partial f_\pi}{\partial t_r} = \frac{\partial f_\pi}{\partial s_r} =0$, in the second case. 
 Because of the way $\hat{Q}$ is constructed, the polynomial $g_\pi = \mathrm{trace}( X_\pi^2 \hat{Q})$ depends only on the entries of $X_\pi^2$ not in row or column $\hat{c}$.
 Indeed, a computation reveals that $(X_\pi^2)_{jl} = (X_{\pi'}^2)_{jl} + h_{jl}$
 where $h_{jl}$ is as in \eqref{eq:h}, 
 and hence $g_\pi = g_{\pi'} + \bar{h}$ where $\bar{h}$ is as in \eqref{eq:h}.  
 Hence, when evaluated at $\hat{C}_\pi$
$$ [g_\pi \,\, \nabla_t g_\pi \,\, 
\nabla_s g_\pi \, \, \nabla_u g_\pi]  = [g_{\pi'} \,\, \nabla_t g_{\pi'} \,\, 
\nabla_s g_{\pi'} \, \, \nabla_u g_{\pi'}].$$
Similarly, 
$\frac{\partial g_\pi}{\partial u^i_{\pi_i'+1}} =0$
in the first case, and $\frac{\partial g_\pi}{\partial t_r} = \frac{\partial g_\pi}{\partial s_r} =0$, in the second case. 
  This shows that the matrix in \eqref{eq:crit-matrix} with $\hat{Q}$ evaluated at 
$\hat{C}_{\pi}$ has rank at most one, and therefore 
$\hat{C}_\pi$ satisfies the conditions of being a critical point as long as it is not a singular point of $\mathcal M_\pi$.
Indeed, Proposition \ref{prop:singular-rank-2} shows that $\hat{C}_\pi$ is singular if and only if $t_i = s_i = 0$ for some $i$ with $\pi_i \geq 2$. 
Since $C_{\pi'}$ is a smooth point of $\mathcal M_{\pi'}$,  $\hat{C}_\pi$ is a smooth point of $\mathcal M_{\pi}$ by construction.
\end{proof}

We now give explicit formulae for the ED and GD degrees of some matroid varieties. 
In the case $r = 2$, these are direct consequences of Theorem~\ref{thm:prod_Grassmannian}.
Recall that these matroid varieties are determined by $\pi_1, \pi_2 > 0$ and $\ell \geq 0$ with $\pi_1 + \pi_2 + \ell = n$.

\begin{corollary}\label{cor:matroidED}
 The ED degree of $\mathcal{M}_{(\pi_1, \pi_2), \ell}$ is $\pi_1\pi_2$.
 If $\ell=0$, the GD degree of $\mathcal{M}_{(\pi_1, \pi_2)}$ is $2$ if 
 $n - 2 \geq \pi_1 \geq 2$ and $1$ otherwise. If $\ell>0$, the GD degree of $\mathcal{M}_{(\pi_1, \pi_2),\ell}$ is $2^{\#\{\lvert\pi_i\rvert>1\}}$.
\end{corollary}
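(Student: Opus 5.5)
The plan is to identify $\mathcal M_{(\pi_1,\pi_2),\ell}$ with a product of Grassmannians and then read off both degrees from Theorem~\ref{thm:prod_Grassmannian}.

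\emph{Identification.} For $r=2$ the realization \eqref{eq:matroid-param} is $A = A'U$ with $A'$ an invertible $2\times 2$ matrix on the realization space. So the row span of $A$ equals the row span of $U$, namely $\mathrm{span}\{(u^1,0,0),(0,u^2,0)\}$ with $u^1\in\CC^{\pi_1}$, $u^2\in\CC^{\pi_2}$. The corresponding projection matrix is therefore block diagonal, $0_\ell\oplus P_1\oplus P_2$ up to the column permutation moving the loops first, with $P_1\in\mathrm{pGr}(1,\pi_1)$ and $P_2\in\mathrm{pGr}(1,\pi_2)$. Taking closures gives $\mathcal M_{(\pi_1,\pi_2),\ell}\cong \mathrm{pGr}(1,\pi_1)\times\mathrm{pGr}(1,\pi_2)\subseteq\mathrm{pGr}(2,n)$. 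Since the matroid isomorphism lemma says ED and GD degrees are invariant under permutation, we may assume $\pi_1\ge\pi_2$. This matches the embedding of Section~\ref{sec:products} with $k=2$, $j=\ell_{\text{prod}}=1$, $r=\pi_1$, $s=\pi_2$. The hypotheses $k\le n/2$ and $s\le r\le r+s\le n$ hold once $n\ge4$; the small cases $n\le 3$ are checked directly.

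\emph{ED degree.} Theorem~\ref{thm:prod_Grassmannian} gives $\binom{\pi_1}{1}\binom{\pi_2}{1}=\pi_1\pi_2$.

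\emph{GD degree, $\ell=0$.} Here $n=r+s$, so the GD degree is $\binom{\min\{\pi_2,2\}}{1}$. This equals $2$ if $\pi_2\ge2$ and $1$ if $\pi_2=1$. With $\pi_2=n-\pi_1$, the condition $\pi_2\ge2$ reads $\pi_1\le n-2$. Under the ordering convention $\pi_1\ge\pi_2$ it is equivalent to the symmetric condition $2\le\pi_1\le n-2$. Without the ordering, the same formula applied with the roles swapped gives the stated condition as well.

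\emph{GD degree, $\ell>0$.} Here $n>r+s$, and the case split depends on whether $r\le n-2$. If $\pi_1\le n-2$, the GD degree is $\binom{\min\{\pi_1,2\}}{1}\binom{\min\{\pi_2,2\}}{1}$. Each factor is $2$ exactly when $\pi_i>1$, which gives $2^{\#\{i:\pi_i>1\}}$. If $\pi_1>n-2$, then since $\pi_2\ge1$ and $\ell\ge1$ we would need $\pi_1\le n-2$, a contradiction; so this case cannot occur. The bookkeeping of this case split, together with the degenerate small cases, is the main thing to check; everything else is immediate from Theorem~\ref{thm:prod_Grassmannian}.

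Reality of all critical points for real data also transfers from Theorem~\ref{thm:prod_Grassmannian} through this identification.
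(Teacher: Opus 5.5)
Your proposal is correct and follows the paper's route. The paper states without further argument that the $r=2$ case is a direct consequence of Theorem~\ref{thm:prod_Grassmannian}; your identification $\mathcal M_{(\pi_1,\pi_2),\ell}\cong \mathrm{pGr}(1,\pi_1)\times\mathrm{pGr}(1,\pi_2)$ with $r=\pi_1$, $s=\pi_2$, $j=1$, together with the observation that $\ell>0$ forces $\pi_1\le n-2$, is exactly the bookkeeping this requires. You also correctly flag that the hypothesis $k\le n/2$ excludes $n\le 3$, so those cases (a point, or $\mathrm{pGr}(1,2)\times\mathrm{pGr}(1,1)\subseteq\mathrm{pGr}(2,3)$) must be checked by hand, a detail the paper leaves implicit.
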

Table \ref{tab:placeholder}
displays the ED degrees of 
rank 2 matroid varieties in $\mathrm{pGr}(2,n)$ for $n=4,5,6$. The starred degrees are consequences of Corollary~\ref{cor:matroidED}. For any $n$, the matroid variety corresponding to the partition $1^n$ is equal to the Grassmannian $\mathrm{pGr}(2,n)$ itself. The ED degree is $\binom{n}{2}$ in this case. The rest of the entries of the table are numerically computed. 
\begin{table}[H]
    \centering
    \begin{gather*}
    \begin{array}{c|cccc|cccccc}
    \toprule
    \textrm{Partition } \pi & 1^4 & 21^2 & 2^2 & 31 &1^5 & 21^3 & 2^21 & 31^2 & 32 & 41 \\
    \midrule
    \textrm{ED Degree of } \mathcal M_\pi    & 6  & 10 & 4^* & 3^* & 10  & 24 & 44 & 16 & 6^* & 4^*\\
    \bottomrule
    \end{array}\\
    \begin{array}{c|ccccccccccccc}
    \toprule
    \textrm{Partition } \pi & 1^6 & 21^5 & 2^21^2 & 2^3 & 31^3 & 321 & 3^2 & 41^2 & 42 & 51 \\
    \midrule
    \textrm{ED Degree of } \mathcal M_\pi     & 15 & 44 & 120 & 168 & 39 & 76 & 9^* & 22 & 8^* & 5^*\\
    \bottomrule
    \end{array}
    \end{gather*}
    
    \caption{Numerically computed ED degrees of matroid varieties in $\mathrm{pGr}(2,n)$ for $n = 4,5$ (above) and $n= 6$ (below). 
    The starred degrees are consequences of Corollary~\ref{cor:matroidED}.}
    \label{tab:placeholder}
\end{table}

We now give a general formula for the GD degree when $\pi$ has at least $3$ parts. 
We use the parametrization $A = A' \cdot U$ where $A',U$ are as in \eqref{eq:matroid-param}, and
we work in the affine chart $u^i(u^i)^\top = 1$ for all $i$. 
Now the projection matrices $P$ and $P'$ for $A$ and $A'$ satisfy the relation $P = U^\top P'U
$. 
Using the cyclic property of the trace, we replace the GD optimization problem with the following constrained optimization problem with data $Q \in \mathrm{pGr}(2,n)$
\begin{align}\label{eq:factored-matroid-opt}
    \textrm{maximize}\quad \mathrm{trace}(UQU^\top P') 
    \quad \quad 
    \textrm{subject to} \quad  UU^\top  = 
    \mathrm{Id}_r, \, P' \in \mathrm{pGr}(2,r).
\end{align} 
We will prove our claim by optimizing separately with respect to $P'$ and $U$.
We use the following lemma to show that this is possible.
\begin{lemma}\label{lem:envelope}
Let $X \times Y\subseteq \mathbb C^c \times \mathbb C^d$ be an affine variety such that $X,Y$ have smooth real points. 
Suppose that $f: X \times Y \to \mathbb C$ with $f(X_\RR \times Y_\RR) \subseteq \RR$ is a differentiable function such that for all $\hat x \in X$, the problem $ \max_{y \in Y_{\mathbb R}} f(\hat x, y)$ has a unique critical point $y^*(\hat x) \in Y$. 
Then $g(x) = f(x, y^*(x))$ is differentiable, and a pair $(\hat x, \hat y) \in X \times Y$ is critical for 
\begin{align*}
    \max_{(x,y) \in X_\mathbb R \times Y_\mathbb R} f(x,y)
\end{align*}
if and only if $\hat y = y^*(\hat x)$ and $\hat x$ is critical for the optimization problem  $\max_{x \in X_\RR} g(x)$.
\end{lemma}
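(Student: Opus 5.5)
The plan is to compute the differential of $g$ with the chain rule and then compare the criticality conditions on both sides. Throughout, "critical" means that the gradient is orthogonal to the tangent space at a smooth point; equivalently, the differential of the objective vanishes on the tangent space.

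\textbf{Step 1: differentiability of $g$.} Since $y^*(\hat x)$ is the unique critical point of $f(\hat x,\cdot)$ on $Y$, it is cut out by the system $d_yf(\hat x,y)|_{T_yY}=0$. I would argue that $y^*$ is differentiable as follows.
\begin{itemize}
\item Write this system in local coordinates on $Y$ at $y^*(\hat x)$.
\item Apply the implicit function theorem to $\nabla_y f(x,y)=0$. The nondegeneracy required is that the critical point is isolated and reduced. Uniqueness of the critical point for \emph{every} $\hat x$ makes the critical correspondence $\{(x,y): y \text{ critical for } f(x,\cdot)\}$ a graph over $X$, so its projection to $X$ is bijective.
\item If the Hessian condition fails, I would instead use that a bijective morphism onto a smooth variety is birational, and that the correspondence is smooth at generic points. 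In the settings where the lemma is applied, $y^*$ is an explicit algebraic map anyway, e.g. the projection onto the top eigenspace.
\end{itemize}
Then $g=f\circ(\mathrm{id},y^*)$ is differentiable as a composition.

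\textbf{Step 2: the envelope identity.} For $v\in T_{\hat x}X$, the chain rule gives
\[
d g_{\hat x}(v) = d_xf(\hat x,y^*(\hat x))(v) + d_yf(\hat x,y^*(\hat x))\bigl(dy^*_{\hat x}(v)\bigr).
\]
The vector $dy^*_{\hat x}(v)$ lies in $T_{y^*(\hat x)}Y$. Because $y^*(\hat x)$ is critical for $f(\hat x,\cdot)$, the second term vanishes. Hence
\[
dg_{\hat x} = d_xf(\hat x,y^*(\hat x))\big|_{T_{\hat x}X}.
\]

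\textbf{Step 3: the equivalence.} The tangent space of the product splits as $T_{(\hat x,\hat y)}(X\times Y)=T_{\hat x}X\oplus T_{\hat y}Y$. So $(\hat x,\hat y)$ is critical for $f$ exactly when both $d_xf(\hat x,\hat y)|_{T_{\hat x}X}=0$ and $d_yf(\hat x,\hat y)|_{T_{\hat y}Y}=0$.
\begin{itemize}
\item The second condition says $\hat y$ is a critical point of $f(\hat x,\cdot)$. By uniqueness this forces $\hat y=y^*(\hat x)$.
\item Given $\hat y=y^*(\hat x)$, Step 2 shows the first condition is equivalent to $dg_{\hat x}=0$ on $T_{\hat x}X$.
\end{itemize}
These two implications together give both directions of the lemma. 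The reality hypotheses only ensure that $g$ is real-valued on real points, so that $\max_{x\in X_\RR} g(x)$ makes sense. Criticality itself is an algebraic condition over $\CC$.

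The main obstacle is Step 1, the regularity of $y^*$. Uniqueness of the critical point alone does not immediately give differentiability through the implicit function theorem. I expect this to need either a genericity or nondegeneracy argument, or an explicit formula for $y^*$ in the application.
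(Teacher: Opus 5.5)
Your proposal is essentially the paper's proof. Criticality of $(\hat x,\hat y)$ splits into the $x$- and $y$-conditions, and uniqueness forces $\hat y=y^*(\hat x)$. The chain-rule term then drops out because $dy^*_{\hat x}$ lands in $T_{y^*(\hat x)}Y$, on which $d_yf$ vanishes; the paper phrases this as $\nabla_y f$ lying in the normal space $N_{y^*(\hat x)}Y$ and hence in the kernel of the (transposed) Jacobian of $y^*$.

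For Step~1, the paper simply cites the implicit function theorem without checking that $y^*(\hat x)$ is a nondegenerate critical point. So the caveat you raise is left unaddressed by the paper's proof as well, and your argument stands on the same footing.
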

\begin{proof}
    We first observe that $y^*: X \to Y$ is differentiable by the implicit function theorem, so the first claim follows. 
    The pair $(\hat x, \hat y)$ is critical if and only if 
    \begin{align*}
        \nabla f(\hat x, \hat y) \in N_{\hat x}X \times N_{\hat y} Y,
    \end{align*}
     where $N_{\hat x}X$ and $N_{\hat y}Y$ refer to the normal spaces of $X$ and $Y$ at the points $\hat x$ and $\hat y$, respectively.  
    This means that $\nabla_xf(\hat x, \hat y) \in N_{\hat x}X$ and
    $\nabla_yf(\hat x, \hat y) \in N_{\hat y}Y$.
    By assumption, the condition $\nabla_yf(\hat x,\hat y) \in N_{\hat y}Y$ implies that $\hat y = y^*(\hat x)$.
    Therefore $(\hat x, \hat y)$ is critical if and only if $\hat y = y^*(\hat x)$ and $\nabla_xf(\hat x, y^*(\hat x)) \in N_{\hat x}X$.

    On the other hand, $\hat x$ is critical for $\max_{x \in X_\RR} g(x)$ if and only 
    \begin{align*}
    \nabla g(\hat x) = 
        \nabla_x f(\hat x, y^*(\hat x)) + \mathrm{Jac}_{y^*}(\hat x) \cdot \nabla_y
        f(\hat x, y^*(\hat x)) \in 
        N_{\hat x}X.
    \end{align*}
    By the definition of $y^*$, $\nabla_y f(\hat x, y^*(\hat x)) \in N_{y^*(\hat x)}Y$ and hence is in the kernel of $\mathrm{Jac}_{y^*}(\hat x)$. 
    Thus $\hat x$ is critical for $\max_{x \in X_\RR} g(x)$ if and only if $\nabla_xf(\hat x, y^*(\hat x))$ is in $N_{\hat x}X$. 
    Together with the previous paragraph, this completes the proof.
\end{proof}
In order to apply Lemma~\ref{lem:envelope}, we prove that if we fix $U$ in \eqref{eq:factored-matroid-opt}, the optimization has a unique critical point. 
This is the content of the next result.
\begin{lemma}\label{lem:unique-cp}
For generic data $Q \in \mathrm{pGr}(2,n)$ and fixed $U$ as in \eqref{eq:matroid-param}, the problem  \begin{align*}
    \mathrm{maximize}\quad \mathrm{trace}(UQU^\top P') \quad\quad \mathrm{subject\,\, to} \quad 
     P' \in \mathrm{pGr}(2,r) 
\end{align*}
has a unique critical point such that $U^\top P'U$ is not in the cut locus of $Q$.
\end{lemma}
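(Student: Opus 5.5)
The plan is to reduce to the single-Grassmannian argument of Corollary~\ref{cor:small-grass}, applied to the compressed data matrix $Q' := UQU^\top \in \mathrm{Sym}^2\CC^r$. Since $\mathrm{trace}(UQU^\top P')$ is exactly the objective \eqref{eq:proj-optprob} on $\mathrm{pGr}(2,r)$ with data $Q'$, Proposition~\ref{prop:cps} shows that the critical points are the symmetric projections $P'$ with $P'Q' = Q'P'$. Equivalently, they are the projections onto $2$-dimensional $Q'$-invariant subspaces admitting a symmetric projection.

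First fix notation. Write $Q = ZZ^\top$ with $Z \in \CC^{n\times 2}$ and $Z^\top Z = \mathrm{Id}_2$, and set $W := UZ \in \CC^{r\times 2}$, so that $Q' = WW^\top$. The key quantity is the $2\times 2$ Gram matrix $M := W^\top W = Z^\top U^\top U Z$. I would first argue that $M$ is invertible for generic $Q$. For a fixed $U$, $\det M$ is a polynomial in the entries of $Z$. It is not identically zero because we may take the columns of $Z$ to be real orthonormal vectors in the row space of $U$ (possible since $r \ge 2$). Then $U^\top U Z = Z$ and $M = \mathrm{Id}_2$. So $\det M \neq 0$ off a proper closed subset. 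This step is where the isotropy issues over $\CC$ live, and it is the one I expect to need the most care. In particular, I need to make sure the genericity is uniform enough for the application in Lemma~\ref{lem:envelope}. There $U$ varies, so one should really say: for generic $Q$, the bad $U$ form a proper closed subset of the $U$-chart, and those $U$ are discarded.

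Assuming $M$ is invertible, $W$ has rank $2$, so $\mathrm{im}(Q') = \mathrm{im}(W)$ is $2$-dimensional. Moreover $\mathrm{im}(Q') \cap \ker(Q') = 0$: if $Wc \in \ker(Q')$ then $WMc = 0$, which forces $c = 0$. Hence $\CC^r = \mathrm{im}(Q') \oplus \ker(Q')$, and both summands are $Q'$-invariant.

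Existence: the candidate is $P'_0 = W M^{-1} W^\top$. This is the symmetric projection onto $\mathrm{im}(Q')$, and it exists because $M$ is invertible. It commutes with $Q'$ since $\mathrm{im}(Q')$ is invariant, so it is critical. It is not in the cut: $U^\top P'_0 U$ has image $U^\top W\CC^2 = U^\top U Z\CC^2$. If $QU^\top U Z c = 0$, then $Z^\top U^\top U Z c = Mc = 0$, so $c = 0$.

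Uniqueness: let $P'$ be critical with $U^\top P' U \notin \mathrm{cut}(Q)$. Because $P'$ commutes with $Q'$, the image of $P'$ splits as $(\mathrm{im} P' \cap \mathrm{im} Q') \oplus (\mathrm{im} P' \cap \ker Q')$, as in the proof of Proposition~\ref{prop:gen-eigen}. Suppose $0 \neq v \in \mathrm{im} P' \cap \ker Q'$. Then $WW^\top v = 0$, and since $W$ has full column rank, $W^\top v = Z^\top U^\top v = 0$. Hence $QU^\top v = 0$. Also $U^\top v \neq 0$ because $UU^\top = \mathrm{Id}_r$, and $U^\top v = U^\top P' U (U^\top v)$ lies in $\mathrm{im}(U^\top P' U)$. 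This puts $U^\top P' U$ in $\mathrm{cut}(Q)$, a contradiction. Therefore $\mathrm{im} P' \subseteq \mathrm{im} Q'$, and since both are $2$-dimensional, $P' = P'_0$.
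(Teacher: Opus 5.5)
Your proof is correct and follows essentially the same route as the paper. Both identify the surviving critical point as the symmetric projection onto $\mathrm{im}(UQU^\top)$, and both show that $U^\top P'U\notin\mathrm{cut}(Q)$ exactly when $\mathrm{im}(P')\cap\ker(UQU^\top)=0$. The paper phrases this as $\mathrm{rank}(UQU^\top P')=2\iff\mathrm{rank}(QU^\top P'U)=2$ via injectivity of $U$ on $\mathrm{im}(Q)$, which is your full column rank of $W=UZ$.

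Your Gram matrix $M=Z^\top U^\top UZ$ usefully makes explicit that a symmetric projection onto $\mathrm{im}(UQU^\top)$ exists, which the paper leaves implicit. One small fix: for non-real $U$ the row space need not contain real vectors. In the genericity check, take the columns of $Z$ to be the transposes of two rows of $U$ instead, which gives $M=\mathrm{Id}_2$ directly.
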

\begin{proof}
We first assume $\ell = 0$.
By Proposition \ref{prop:cps}, each critical point $P'$ projects onto an invariant subspace of $UQU^\top $. 
Since $UQU^\top $ has rank $2$, there is one critical point $P'$ with the property $\mathrm{rank}(UQU^\top P')=2$.
It therefore suffices to prove that $\mathrm{rank}(UQU^\top P')=2$ is equivalent to $\mathrm{rank}(QU^\top P'U)=2$.  
Since $U$ has full row rank $r$, ${\rm im}(P'U)={\rm im}(P')$. Hence ${\rm rank}(QU^\top P')={\rm rank}(QU^\top P'U)$. If ${\rm rank}(UQU^\top P')=2$, in particular ${\rm rank}(QU^\top P')=2$ and ${\rm rank}(QU^\top P'U)=2$. Conversely, let ${\rm rank}(QU^\top P'U)=2$. Then $QU^\top $ is injective on ${\rm im}(P')$ and ${\rm im}(QU^\top P')={\rm im}(Q)$. Since ${\rm rank}(QU^\top )={\rm rank}(UQ)$, $U$ is injective on ${\rm im}(Q)$. 
Hence ${\rm rank}(UQU^\top P')=2$.
Note that we only used $\mathrm{rank}(Q) = 2$ and not $Q^2 = Q$.
Since rank is preserved by taking sufficiently large submatrices, the proof extends to the case $\ell > 0$.
\end{proof}

\begin{theorem}
    Suppose $r \geq 3$.
    For real data $Q$, the critical points of the optimization problem \eqref{eq:factored-matroid-opt}  are pairs $(U, P')$ where each row $u^i$ of $U$ is an eigenvector with nonzero eigenvalue of the block $Q_{ii}$, and $P'$ is the unique critical point in Lemma \ref{lem:unique-cp}. 
\end{theorem}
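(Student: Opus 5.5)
We will apply Lemma~\ref{lem:envelope} with $X$ the variety of matrices $U$ of the block shape in \eqref{eq:matroid-param} with $u^i(u^i)^\top=1$ (a product of spheres $X=\prod_{i=1}^r \{u^i\in\CC^{\pi_i}: u^i(u^i)^\top=1\}$), $Y=\mathrm{pGr}(2,r)$, and $f(U,P')=\mathrm{trace}(UQU^\top P')$. Lemma~\ref{lem:unique-cp} provides, for each $U$, the unique non-extraneous critical point $P'^*(U)$. Hence critical pairs are exactly $(\hat U, P'^*(\hat U))$ with $\hat U$ critical for the reduced function $g(U)=\mathrm{trace}(UQU^\top P'^*(U))$. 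The first step is therefore to identify $P'^*(U)$ explicitly. Writing $Q=ZZ^\top$ with $Z\in\RR^{n\times 2}$ and $Z^\top Z=\mathrm{Id}_2$, the matrix $UQU^\top=(UZ)(UZ)^\top$ has rank $2$. The critical point with $\mathrm{rank}(UQU^\top P')=2$ is the projection onto $\mathrm{im}(UQU^\top)=\mathrm{im}(UZ)$, that is, $P'^*=UZ(Z^\top U^\top UZ)^{-1}Z^\top U^\top$. Consequently $g(U)=\mathrm{trace}(UQU^\top P'^*)=\mathrm{trace}(Z^\top U^\top U Z)=\mathrm{trace}(U^\top U Q)$.

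The key simplification is that $g$ is linear in $U^\top U$. Since $U$ is block diagonal with row $u^i$ supported on block $i$, we get $U^\top U=\bigoplus_i (u^i)^\top u^i$. Therefore
\begin{align*}
g(U)=\sum_{i=1}^r u^i Q_{ii} (u^i)^\top,
\end{align*}
where $Q_{ii}$ is the $\pi_i\times\pi_i$ diagonal block of $Q$ (loops contribute nothing). The problem decouples into $r$ independent Rayleigh quotient problems on spheres $u^i(u^i)^\top=1$. Their critical points are exactly the eigenvectors of $Q_{ii}$, by the Lagrange condition $Q_{ii}(u^i)^\top=\lambda (u^i)^\top$, equivalently by Proposition~\ref{prop:cps} with $k=1$. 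For real $Q$ each $Q_{ii}$ is real symmetric, hence diagonalizable with real eigenvectors, which gives reality.

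It remains to discard eigenvectors with eigenvalue $0$. If $u^iQ_{ii}=0$ then $Q_{ii}(u^i)^\top=0$, and \eqref{eq:eigennorms} gives $\|Q\hat u\|=0$ for the padded vector. Then $UZ$ has a zero row, and for the triple $(U,P'^*)$ one has to check that $U^\top P'^*U\in\mathrm{cut}(Q)$, or that $U$ is singular/degenerate for the model. I expect this to be the main obstacle. Under $r\ge 3$, dropping row $i$ from $UZ$ leaves a $2$-dimensional image in a space of dimension at least $2$, so $P'^*$ still exists. I would argue instead that $Q U^\top P'^*U$ has rank $<2$. Alternatively, I would argue that $e_i\in\ker$ structure forces $\mathrm{rank}(UQU^\top P')$ conditions from Lemma~\ref{lem:unique-cp} to be compatible only generically away from such $U$; if this fails, I would fall back on showing these points lie in $\mathcal E^Q_{\mathcal M}$ directly. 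Finally, one must verify the hypotheses of Lemma~\ref{lem:envelope}, namely uniqueness on an open set and differentiability of $P'^*$. Both follow from the explicit formula wherever $Z^\top U^\top UZ$ is invertible, which holds generically for $r\ge 3$.
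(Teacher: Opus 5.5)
Up to the identity $g(U)=\sum_i u^iQ_{ii}(u^i)^\top$ and the eigenvector characterization, your argument is the paper's. Your explicit $P'^*=UZ(Z^\top U^\top UZ)^{-1}Z^\top U^\top$ is a clean way to carry out its ``linear algebra exercise.''

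The gap is the last step, and the route you commit to cannot work: $U^\top P'^*U$ is never in $\mathrm{cut}(Q)$. By definition $P'^*$ is the critical point of Lemma~\ref{lem:unique-cp}, i.e.\ the one that avoids the cut locus. Concretely, $\mathrm{im}(U^\top P'^*U)=\{U^\top UZc : c\in\CC^2\}$, and
$$QU^\top UZc=Z\,(Z^\top U^\top UZ)\,c\neq 0 \quad \text{for } c\neq 0,$$
because you already need $Z^\top U^\top UZ$ invertible to define $P'^*$. As you yourself note, $P'^*$ still exists after a row of $UZ$ vanishes, and that is exactly why $\mathrm{rank}(QU^\top P'^*U)=2$ whether or not some $u^i$ lies in $\ker Q_{ii}$. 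For instance, take $\pi=(3,1,1)$ and $u^1\in\ker Q_{11}$. You get $P=\mathrm{diag}(0,0,0,1,1)$, which for generic $Q$ is neither in $\mathrm{cut}(Q)$ nor commutes with $Q$. So your fallback through $\mathcal E^Q_{\mathcal M}$ fails as well.

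What is missing is the actual reason these points are discarded: they are singular points of the model. You mention this only as an unexplored alternative, but you already have the key fact.
\begin{itemize}
\item Write $Z_i$ for the rows of $Z$ indexed by the $i$-th parallel class. Then $u^iQ_{ii}(u^i)^\top=\|u^iZ_i\|^2$, so for real $Q$ the $i$-th row of $UZ$ vanishes exactly when $u^i$ has eigenvalue $0$.
\item In that case $\mathrm{im}(P'^*)\subseteq e_i^\perp$. Any $2\times r$ matrix $A'$ with row span $\mathrm{im}(P'^*)$ then has zero $i$-th column, so the $i$-th block of $A=A'U$ vanishes.
\item For generic rank-$2$ $Q$, $\ker Q_{ii}\neq 0$ only when $\pi_i\ge 3$. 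Proposition~\ref{prop:singular-rank-2} therefore places $U^\top P'^*U$ in the singular locus, and it is removed by the restriction to $\mathcal M_{\mathrm{reg}}$ in \eqref{eq:crit}.
\item You also need the converse, which your proposal does not address. If every $u^i$ has nonzero eigenvalue $\lambda_i=\|u^iZ_i\|^2$, then every column of $A'$ and every block of $A$ is nonzero. So the point is smooth, and it avoids the cut locus by Lemma~\ref{lem:unique-cp}.
\end{itemize}
This singular-locus argument is how the paper finishes the proof.
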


\begin{proof}
By Lemmas~\ref{lem:envelope} and \ref{lem:unique-cp}, the pair $(U, P')$ is critical for \eqref{eq:factored-matroid-opt} if $U$ is critical for
\begin{align}\label{eq:env-opt}
    \textrm{maximize  }\mathrm{trace}(UQU^\top P^*(U))
     \textrm{   subject to  }UU^\top  = \mathrm{Id}_r 
\end{align}
and $P'$ is as in Lemma~\ref{lem:unique-cp}.
It is a linear algebra exercise to show that \[\mathrm{trace}(UQU^\top P^*(U))  = \mathrm{trace}(UQU^\top )  = 
\sum_{i=1}^r u^i Q_{ii} (u^i)^\top .
\] 
Thus the critical points \eqref{eq:env-opt} are matrices $U$ where $u^i$ is an eigenvector of the block $Q_{ii}$. 
The proof is concluded by showing that for such a $U$, the point $P = U^\top P'U$ is a singular point in the model if and only if $u^iQ_{ii}= 0$ for some $i \in[r]$.
For sufficiently general $Q$, $u^iQ_{ii}= 0$ occurs only when $\pi_i > 2$.
Indeed, since $u^i$ is an eigenvector of $Q_{ii}$, $u^iQ_{ii} = 0$ is equivalent to $u^iQ_{ii}(u^i)^\top = 0$. 
It follows that $u^i Q_{ii} = 0$ is equivalent to the $i$th row of $UQU^\top $ being equal to zero. 
Since the rows of $A'$ span the image of $UQU^\top$, this condition is equivalent to the $i$th column of $A$ being zero, which is precisely the description of the singular locus in Proposition \ref{prop:singular-rank-2}.
Finally, we observe that these critical points are not in the cut locus of $Q$ by Lemma~\ref{lem:unique-cp}.
\end{proof}
\begin{corollary}\label{cor:GD-matroid}
    If $r \geq 3$, then the GD degree of $\mathcal{M}_{\pi, \ell}$ is $2^{\#\{i \in [r] : |\pi_i| > 1\}}$.
\end{corollary}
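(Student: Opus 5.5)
The count will come from the theorem just proved, which describes all GD critical points of \eqref{eq:factored-matroid-opt} for $r\geq 3$. It remains to show that the reformulation \eqref{eq:factored-matroid-opt} loses no critical points of the original problem, and then to count the admissible choices of $U$.

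\emph{Reduction.} First reduce to $\ell=0$. The loop columns of $A$ are zero, so the rows and columns of $P$ indexed by loops vanish. Only the principal block of $Q$ on the non-loop indices then enters the objective. For generic $Q\in\mathrm{pGr}(2,n)$ this block has rank $2$, and by the closing remark of Lemma~\ref{lem:unique-cp} the arguments there only use this rank. Next, on the dense open chart where every row $u^i$ of $U$ is non-isotropic, normalize $u^i(u^i)^\top=1$. The normalized map $(U,P')\mapsto U^\top P'U$ is then a parametrization of an open dense subset of $\mathcal M_{\pi,\ell}$, and it is finite-to-one: each $u^i$ is determined up to the sign $\pm u^i$. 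Since $UU^\top=\mathrm{Id}_r$, the point $U^\top P'U$ is indeed a projection. By Proposition~\ref{prop:singular-rank-2} this parametrization is a submersion onto the regular locus, so critical points of \eqref{eq:trace-opt} on $\mathcal M_{\pi,\ell}$ correspond to critical points of \eqref{eq:factored-matroid-opt} modulo the sign action. The cut-locus condition $\mathrm{rank}(QU^\top P'U)=2$ is preserved by this correspondence.

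\emph{Counting.} By the theorem, a GD critical point has each row $u^i$ equal to a normalized eigenvector of $Q_{ii}$ with nonzero eigenvalue, and $P'$ is then uniquely determined by Lemma~\ref{lem:unique-cp}. The number of such eigenvectors, up to sign, is the rank of $Q_{ii}=\mathrm{pr}_{S_i}Q\,\mathrm{pr}_{S_i}$. For generic $Q$ of rank $2$ this rank is $\min\{\pi_i,2\}$. Moreover, generic $Q_{ii}$ has distinct nonzero eigenvalues that are real (take $Q$ real, so $Q_{ii}$ is real symmetric) and non-isotropic eigenvectors. So there are $2$ choices when $\pi_i\geq 2$ and $1$ choice when $\pi_i=1$, and the choices for different $i$ are independent. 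This gives $\prod_i \min\{\pi_i,2\}=2^{\#\{i:\pi_i>1\}}$ points, all real for real data.

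\emph{Main obstacle.} The hardest step is checking that these points are pairwise distinct, are not removed as cut-locus points, and are nonsingular. Nonsingularity and avoidance of the cut locus are handled at the end of the theorem's proof: $u^iQ_{ii}\neq 0$ because the chosen eigenvalue is nonzero. For distinctness, I would argue that different eigenvector choices give different rows $u^i$ up to sign, and hence different column spaces of $U^\top$ restricted to the block $S_i$. Therefore the projections $U^\top P'U$ differ, provided the corresponding column of $A'$ is nonzero, which is exactly the nonsingularity just established.
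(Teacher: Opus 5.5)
Your proposal is correct and follows the paper's argument. Both take the critical pairs $(U,P')$ from the preceding theorem, count $\min\{\pi_i,2\}$ eigenvector choices for each block $Q_{ii}$, and divide out the $2^r$ ambiguity from simultaneously flipping the sign of a row of $U$ and the corresponding column of $A'$; your extra checks (the reduction to $\ell=0$, the submersion onto the regular locus, and the distinctness of the resulting projections $U^\top P'U$) only make explicit what the paper's short proof leaves implicit.
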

\begin{proof}
    The problem \eqref{eq:factored-matroid-opt} has $2^r \cdot 2^{\#\{i \in [r] : |\pi_i| > 1\}}$ critical points, since every block $Q_{ii}$ of $Q$ has two eigenvectors with nonzero eigenvalue if $\pi_i \geq 2$. 
    The factor $2^r$ comes from the fact that every chosen eigenvector may have its sign flipped. 
    However, the map $(A', U) \mapsto A = A'U$ is $2^r$-to-one, since we can simultaneously flip the sign of a row of $U$ and a column of $A'$.
\end{proof}

\begin{table}[!ht]
    \centering
    \scalebox{0.95}{
    $
    \begin{array}{cc|cccc||cc|cccc}
    \toprule
       & & \textrm{\small{Dim}} & \textrm{\small{Deg}} & \textrm{ED} & \textrm{GD}
       & & & \textrm{Dim} & \textrm{Deg} & \textrm{ED} & \textrm{GD}\\
    \midrule 
    1. &
    
        \mathcal S_{456} & 0 & 1 & 1_{\mathbb R} & 1_{\mathbb R} & 20. &
      
         \mathcal S_{136} & 5 & 5 &24 & 3_{\mathbb R}
        \\[12pt]   2. &
      \mathcal S_{356} & 1 & 1 & 2_{\mathbb R} & 2_{\mathbb R} & 21. &
       \mathcal S_{235}& 5 & 5 &  24 & 3_{\mathbb R}
        
        \\[12pt] 3. &
        \mathcal S_{256} & 2 & 1 & 3_{\mathbb R} & 3_{\mathbb R} & 22. &
       
          \mathcal S_{145} & 5 & 6 & 32 & 3_{\mathbb R}
        \\[12pt] 4. &
        \mathcal S_{346} &  2& 1 & 3_{\mathbb R} & 3_{\mathbb R} & 23. &
         
        \scalebox{0.6}{
        $\begin{bmatrix}
            1 & a & 0 & 0 & 0 & c\\
            0 & 0 & 1 & b & 0 & d\\
            0 & 0 & 0 & 0 & 1 & e
        \end{bmatrix}$} & 5 & 11 &80 & 4_{\mathbb R}
        \\[12pt] 5. &
        \scalebox{0.6}{
        $\begin{bmatrix}
            0 & 1 & a & 0 & 0 & 0\\
            0 & 0 & 0 & 1 & b & 0\\
            0 & 0 & 0 & 0 & 0 & 1
        \end{bmatrix}^{*}$} & 2 & 2 & 4_{\mathbb R} & 4_{\mathbb R} & 24. &
          \scalebox{0.6}{
        $\begin{bmatrix}
            1 & a & 0 & b & 0 & c\\
            0 & 0 & 1 & d & 0 & 0\\
            0 & 0 & 0 & 0 & 1 & e
        \end{bmatrix}$} & 5 & 11 &80 & 4_{\mathbb R}
        \\[12pt] 6. &
        \mathcal S_{345} & 3 & 1 & 4_{\mathbb R} & 1_{\mathbb R} & 25. &
        
         \scalebox{0.6}{
        $\begin{bmatrix}
            1 & 0 & a & 0 & 0 & ac\\
            0 & 1 & b & 0 & 0 & bc\\
            0 & 0 & 0 & 1 & d & e
        \end{bmatrix}$} & 5 & 16 & 200 & 92
        \\[12pt] 7. &
        \mathcal S_{156} & 3 & 1 & 4_{\mathbb R} & 1_{\mathbb R} &
        26. &
        \scalebox{0.6}{
        $\begin{bmatrix}
            1 & 0 & 0 & a  & b & 0\\
            0 & 1 & 0 & ac  & 0 & cd\\
            0 & 0 & 1 & 0  & be & -de
        \end{bmatrix}$} & 5 & 42 & 1348 &  1016  
        \\[12pt] 8. & 
        \scalebox{0.6}{
        $\begin{bmatrix}
            0 & 1 & a & 0 & 0 & 0\\
            0 & 0 & 0 & 1 & 0 & b\\
            0 & 0 & 0 & 0 & 1 & c
        \end{bmatrix}^{*}$} & 3 & 3 & 6_{\mathbb R} & 6_{\mathbb R}&
         27. &
         \mathcal S_{234} & 6 & 5 & 10_{\mathbb R} & 1_{\mathbb R}
        \\[12pt] 9. & 
        \scalebox{0.6}{
        $\begin{bmatrix}
            1 & a & 0 & 0 & 0 & 0\\
            0 & 0 & 1 & b & c & 0\\
            0 & 0 & 0 & 0 & 0 & 1
        \end{bmatrix}^{*}$} & 3 & 3 & 6_{\mathbb R} & 6_{\mathbb R} & 28. &
         \mathcal S_{126} & 6 & 5 & 10_{\mathbb R} & 1_{\mathbb R}
        \\[12pt] 10. & 
        \scalebox{0.6}{
        $\begin{bmatrix}
            1 & a & 0 & 0 & 0 & 0\\
            0 & 0 & 1 & b & 0 & 0\\
            0 & 0 & 0 & 0 & 1 & c
            \end{bmatrix}^{*}$} & 3 & 6 & 8_{\mathbb R} & 8_{\mathbb R} & 29. &
       
        \scalebox{0.6}{
        $\begin{bmatrix}
            1 & 0 & a & 0 & 0 & b\\
            0 & 1 & c & 0 & 0 & d\\
            0 & 0 & 0 & 1 & e & f
        \end{bmatrix}$} & 6 & 21 & 120 & 6_{\mathbb R}
        \\[12pt] 11. &
        \mathcal S_{246} & 3 & 2 & 10 & 10 & 30. &
        \scalebox{0.6}{
        $\begin{bmatrix}
            1 & 0 & a & b & 0 & c\\
            0 & 1 & d & e & 0 & 0\\
            0 & 0 & 0 & 0 & 1 & f
        \end{bmatrix}$} & 6 &21 & 120 & 6_{\mathbb R}
        \\[12pt] 12. &
        \mathcal S_{236} & 4 & 2 & 6_{\mathbb R} & 2_{\mathbb R} & 31. &
        \mathcal S_{135} & 6 & 16 & 104 & 10
        \\[12pt] 13. &
        \scalebox{0.6}{
        $\begin{bmatrix}
            1 & c & d & 0 & 0 & 0\\
            0 & 0 & 0 & 1 & 0 & a\\
            0 & 0 & 0 & 0 & 1 & b
        \end{bmatrix}^{*}$} & 4 & 6 & 9_{\mathbb R} & 3_{\mathbb R} & 32. &  
        \scalebox{0.6}{
        $\begin{bmatrix}
            1 & 0 & 0 & 0 & a & b\\
            0 & 1 & 0 & d & 0 & c\\
            0 & 0 & 1 & e & f & 0
        \end{bmatrix}$} & 6 &  42 & 1070 & 506
        \\[12pt] 14. &
        \mathcal S_{245} & 4& 3 &16 & 2_{\mathbb R} & 33. &
          \mathcal S_{134} & 7 & 21 & 52 & 2_{\mathbb R}
        \\[12pt] 15. &
         \mathcal S_{146} & 4& 3 &16 & 2_{\mathbb R} & 34. &
       \mathcal S_{125} & 7 & 21 & 52 & 2_{\mathbb R}
        \\[12pt] 16. &
        \scalebox{0.6}{
        $\begin{bmatrix}
            1 & a & 0 & 0 & 0 & 0\\
            0 & 0 & 1 & b & 0 & c\\
            0 & 0 & 0 & 0 & 1 & d
        \end{bmatrix}$} & 4 & 8 & 20 & 20 & 35. & 
        \scalebox{0.6}{
        $\begin{bmatrix}
            1 & 0 & 0 & 0 & a & b\\
            0 & 1 & 0 & c & e & f\\
            0 & 0 & 1 & d & eg & fg
        \end{bmatrix}$} & 7 &  42 & 140 & 6_{\mathbb R}
        
        \\[12pt] 17. &
        \scalebox{0.6}{
        $\begin{bmatrix}
            1 & a & 0 & 0 & c & 0\\
            0 & 0 & 1 & b & d & 0\\
            0 & 0 & 0 & 0 & 0 & 1
        \end{bmatrix}$} & 4 & 5 & 44 & 34 & 36. &
        \scalebox{0.6}{
        $\begin{bmatrix}
            1 & 0 & 0 & 0 & a & b\\
            0 & 1 & 0 & c & d & e\\
            0 & 0 & 1 & f & g & 0
        \end{bmatrix}$} & 7 & 42 & 416 & 34
        
         \\[12pt] 18. &
         \scalebox{0.6}{
        $\begin{bmatrix}
            0 & 1 & 0 & a  & 0 & c\\
            0 & 0 & 1 & b  & 0 & 0\\
            0 & 0 & 0 & 0  & 1 & d
        \end{bmatrix}$} & 4 & 5 & 44 & 34 & 37. &
        \mathcal S_{124} & 8 & 42 & 102 & 3_{\mathbb R}
        \\[12pt]
        19. &
        \scalebox{0.6}{
        $\begin{bmatrix}
            1 & a & 0 & 0 & 0 & 0\\
            0 & 0 & 1 & 0 & b & c\\
            0 & 0 & 0 & 1 & d & e
        \end{bmatrix}^{*}$} & 5 & 10 & 12_{\mathbb R} & 2_{\mathbb R} &
        38. &
         \mathcal S_{123} & 9 & 42 & 20 & 1_{\mathbb R}
        \\
        \bottomrule
    \end{array} $}
    \caption{Birational parametrizations of matroid realization spaces up to matroid isomorphism, followed by the dimensions, degrees, ED degrees, and GD degrees of these varieties.}
    \label{tab:36-matroids}
\end{table}

\subsection{Rank 3 Matroids on 6 Elements}
There are 2053 matroids of rank $3$ on $6$ elements; up to isomorphism there are $38$ \cite[Section 5]{dukes}. 
They are all realizable and their matroid varieties are  irrreducible.
Of these 38 matroid varieties, 20 are Schubert varieties, which we discuss in Section~\ref{sec:schubert}. 
We use the notation $\mathcal S_{ijk}$ introduced in the subsequent section; this identifies a Schubert variety by the three pivot columns in its reduced row echelon form. 
Of the remaining $18$ matroid varieties, 5 are products of smaller Grassmannians; these are marked with an  asterisk in Table~\ref{tab:36-matroids}. 

\begin{theorem}\label{thm:table}
    The degrees, ED degrees, and GD degrees of rank 3 matroid varities on 6 elements are found in Table~\ref{tab:36-matroids}.
\end{theorem}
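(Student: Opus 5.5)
The statement is essentially a computational result, so the proof is a verification organized so that as much as possible is either read off from earlier theorems or certified by symbolic computation. Numerics are used only where nothing else is available. First I would enumerate the 38 isomorphism classes of rank~3 matroids on $[6]$ from \cite{dukes}. For each class I would produce the birational parametrization listed in Table~\ref{tab:36-matroids}. Birationality is checked by fixing a chart, e.g.\ an identity submatrix on a basis, and confirming that the remaining parameters are recovered rationally from Pl\"ucker ratios; this also gives the dimension as the parameter count. I would then compute the prime ideal of each matroid variety by eliminating the parameters from $x_I=\det A_I$, or equivalently by saturating $I_{\mathrm{Gr}(3,6)}+\langle x_I : I \notin \mathcal B(M)\rangle$ by the product of the basis coordinates. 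The degree is then read off from the Hilbert polynomial.

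Next I would use structure to settle as many entries as possible. For the 20 Schubert varieties the dimension and degree follow from the Schubert calculus count of standard Young tableaux. For the 5 starred rows the variety is a product of smaller Grassmannians after a coordinate permutation, which I would check with Proposition~\ref{prop:pluecker}; ED and GD degrees then come directly from Theorem~\ref{thm:prod_Grassmannian}. For example, row~10 is $\mathrm{pGr}(1,2)^{3}$, which has ED degree $8$. Several Schubert entries with small GD degree, namely rows~1, 6, 7, 27, 28 and 38, should agree with Theorem~\ref{thm:gd-schubert} and Corollary~\ref{cor:small-grass}; these serve as consistency checks.

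For the remaining entries I would compute critical points from the parametrized formulation~\eqref{eq:crit-matrix}. For the ED degree I take $Q$ a random real symmetric matrix. For the GD degree I take $Q$ a random real point of $\mathrm{pGr}(3,6)$, e.g.\ $Q=Z Z^\top$ with $Z$ a random orthonormal $6\times 3$ frame. I then form the $2\times 2$ minors and saturate by the parametrization's degeneracy locus, namely the singular locus (pulled back), $\mathrm{trace}(XX^\top)$, and, for GD, the cut locus $\mathrm{trace}(XY^\top)$, where $Y$ is the cocircuit matrix of $Q$. The degree of the resulting zero-dimensional ideal is the count, which I would compute exactly over a large prime field with random data as a certificate, cross-checked with monodromy/homotopy continuation over $\mathbb C$. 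The subscript $\mathbb R$ marks would be verified by running real random data and observing that all solutions are real.

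The main obstacle will be the saturation for the large varieties with degree $42$ (rows~26, 32, 35, 36 and 37), where Gr\"obner computations are heavy and the correct removal of extraneous components matters. Here I would work in the parameter chart, so that singular points of the model correspond to parameter degeneracies that can be saturated directly. I would then rely on homotopy continuation with trace tests to certify completeness of the solution sets, and confirm that the counts are stable over several random data points.
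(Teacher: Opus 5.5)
Your overall strategy is the paper's. The table is certified by computation. Closed-form results from earlier sections settle some rows, and the remaining ED and GD degrees (and the $\mathbb R$ marks) are obtained numerically or symbolically, with GD data taken generic in $\mathrm{pGr}(3,6)$ and the cut locus removed.

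The one concrete error is your use of Theorem~\ref{thm:prod_Grassmannian} for \emph{all} starred rows. That theorem is stated and proved only for a product of two Grassmannians $\mathrm{pGr}(j,r)\times\mathrm{pGr}(\ell,s)$ embedded block-diagonally. Among the starred rows, only 8, 13 and 19 have this form. Up to permuting columns they are $\mathrm{pGr}(2,3)\times\mathrm{pGr}(1,2)$ plus a loop, $\mathrm{pGr}(1,3)\times\mathrm{pGr}(2,3)$, and $\mathrm{pGr}(2,4)\times\mathrm{pGr}(1,2)$. These are exactly the rows for which the paper invokes it.

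Rows 5, 9 and 10 are three-block products. Row 10 is $\mathrm{pGr}(1,2)^3$, as you note. Rows 5 and 9 are $\mathrm{pGr}(1,2)\times\mathrm{pGr}(1,2)$ and $\mathrm{pGr}(1,2)\times\mathrm{pGr}(1,3)$, each together with a factor $\mathrm{pGr}(1,1)$ from the isolated pivot $e_6$. Their ED degrees $4,6,8$ are indeed immediate, since $\mathrm{trace}(PQ)$ splits over the diagonal blocks and Proposition~\ref{prop:cps} applies blockwise.

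The GD degrees and the reality claims of these rows are not covered. The cut-locus analysis in Lemmas~\ref{lem:n>r+s} and~\ref{lem:n=r+s} is specific to two blocks. It cannot be iterated, since a product of two of the factors is not a Grassmannian. It does not extrapolate either. In row 10 the three blocks fill all of $\mathbb C^6$, yet no critical point is removed ($\mathrm{GD}=\mathrm{ED}=8$). In the two-block case $n=r+s$, by contrast, the singular-vector pairing from Proposition~\ref{prop:SVD} does remove critical points.

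To repair this, you can move rows 5, 9 and 10 into your computational part, as the paper does. Alternatively, give a genuine three-block argument. Write $Q=ZZ^\top$ with row blocks $Z_i$. In each diagonal block $Q_{ii}$ choose one eigenvector $u_i$ with nonzero eigenvalue; for the $1\times 1$ block of the isolated pivot this is just the corresponding row of $Z$. Then show that for generic $Q$ the vectors $Z_i^\top u_i$ are linearly independent, by exhibiting one explicit $Q$ for each choice, as in the proof of Lemma~\ref{lem:n>r+s}.

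Two smaller points go the other way:
\begin{itemize}
\item Rows 2, 3, 4 and 12 ($\mathcal S_{356}, \mathcal S_{256}, \mathcal S_{346}, \mathcal S_{236}$) are also settled in closed form by Corollary~\ref{cor:Schubert-EDGD}.
\item What you call consistency checks for rows 1, 6, 7, 27, 28 and 38 are actually proofs, via Corollaries~\ref{cor:small-grass} and~\ref{cor:Schubert-EDGD}. The paper uses the latter for rows 1, 2, 3, 4, 7, 12 and 28.
\end{itemize}

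Your exact finite-field saturations would give a stronger certificate than the paper's route. The paper uses a parameter homotopy from ED start solutions to data in $\mathrm{pGr}(3,6)$, followed by monodromy, and checks only rows 1--16 symbolically. For the degree-$42$ rows, however, you end up relying on the same numerics.
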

\begin{proof}[Proof and discussion.]
    Of the 38 rank $3$ matroids on $6$ elements, $37$ are positroids; a positroid is a matroid that has a point in its realization space whose Pl\"ucker coordinates all have the same sign. 
    We computed these positroids using Grassmann necklaces \cite{postnikov}. 
    The matroid which is not isomorphic to a positroid (row 26) was found using Gemini 3.1. 
    We check that our 38 representative matroids are nonisomorphic using \verb+is_isomorphic+ in \verb+Oscar.jl+ \cite{oscar}.
    To obtain the parametrizations, we used the \verb+Oscar.jl+  function \verb+realization_space+.

    The dimensions and degrees were computed using \verb+Oscar+.
    The ED and GD degrees, along with the real critical point counts in rows 8, 13, and 19  follow from Theorem~\ref{thm:prod_Grassmannian}; those in rows 1, 2, 3, 4, 7, 12, and 28 will follow from Corollary~\ref{cor:Schubert-EDGD} in Section~\ref{sec:schubert}.
    The remaining ED and GD degrees and real critical point counts were computed numerically using \verb+HomotopyContinuation.jl+. 
    The GD degrees were computed by solving the ED problem, and tracking the generic data matrix to a generic data matrix in the Grassmannian via parameter homotopy. 
    From these start solutions, we use monodromy to find the remaining solutions.
    A subscript $\RR$ in the table means all points are real for real data.
    We verified the ED and GD degrees in rows 1--16 symbolically in \verb+Oscar.jl+. 
\end{proof}

\section{Schubert Varieties}\label{sec:schubert}

We conclude with a discussion of Schubert varieties.  
Schubert cells stratify the Grassmannian $\mathrm{Gr}(k,n)$ based on the reduced row echelon forms of the $k \times n$ matrices. 
More concretely, given a subset $\{i_1 <  \cdots < i_k\}$ of $[n]$, the corresponding \emph{Schubert cell} $\mathcal S_{i_1\cdots i_k}^\circ$ contains all vector spaces in $\mathrm{Gr}(k,n)$ with basis whose reduced row echelon form has pivot columns $\{i_1, \ldots, i_k\}$. 
The corresponding \emph{Schubert variety} $\mathcal S_{i_1 \cdots i_k} = \overline{\mathcal S_{i_1\cdots i_k}^\circ}$ is the Zariski closure of the Schubert cell. 
For example $\mathcal S_{135} \subseteq{\mathrm{Gr}(3,6)}$ is parameterized by matrices of the form 
\begin{align*}
    \begin{bmatrix}
        1 & a & 0 & b & 0 & c\\
        0 & 0 & 1 & d & 0 & e\\
        0 & 0 & 0 & 0 & 1 & f
    \end{bmatrix}.
\end{align*}
Schubert varieties are special cases of  matroid varieties which we studied in Section~\ref{sec:positroids}. 
The vanishing ideal in Pl\"ucker coordinates of the Schubert variety $\mathcal S_{i_1\cdots i_k}$ is 
\begin{equation}\label{eq:schub-ideal}
    I_{\mathcal S_{i_1\cdots i_k}} = 
    I_{\mathrm{Gr}(k,n)} + \langle x_{j_1 \cdots j_k} : j_1< \cdots < j_k, 
    \textrm{ there exists } \alpha \in [k] \textrm{ such that }
    j_{\alpha} < i_\alpha \rangle
\end{equation}
where $I_{\mathrm{Gr}(k,n)}$ is the ideal generated by the Pl\"ucker relations. 
\begin{remark}
    We have defined Schubert varieties with respect to the flag $\langle e_n \rangle \subseteq \langle e_{n-1}, e_n \rangle \subseteq \cdots \subseteq \langle e_1, \ldots, e_n \rangle$.
    Because of the $\mathrm{O}(n)$-equivariance of the optimization problem \eqref{eq:trace-opt}, the ED and GD degrees are independent of the reference flag. 
\end{remark}
The ED and GD degrees of Schubert varieties were studied  for $k = 2$ in \cite[Section 6]{FRS}.
Our first result of this section is the resolution of \cite[Conjecture 6.7]{FRS}. 

\begin{theorem}\label{thm:k2-schub}
    The Schubert variety $\mathcal S_{ij}$ has GD degree $1$ if $j=i+1 \bmod n$ and $2$ otherwise.
\end{theorem}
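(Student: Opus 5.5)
The plan is to split $\mathcal S_{ij}\subseteq \mathrm{pGr}(2,n)$ into cases according to its geometric description. With respect to our flag, a $2$-plane $V$ lies in $\mathcal S_{ij}$ if and only if $V\subseteq W:=\langle e_i,\ldots,e_n\rangle$ and $V\cap W'\neq 0$, where $W':=\langle e_j,\ldots,e_n\rangle$. In several cases this is a product of Grassmannians, and then Corollary~\ref{cor:small-grass} and Theorem~\ref{thm:prod_Grassmannian} do all the work.

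\textbf{Cyclically adjacent case.} If $j=i+1$, then $\dim W'=\dim W-1$, so the condition $V\cap W'\neq 0$ is automatic. Hence $\mathcal S_{i,i+1}=\mathrm{pGr}(2,n-i+1)$, embedded as a single smaller Grassmannian in the coordinates $i,\ldots,n$. Corollary~\ref{cor:small-grass} gives GD degree $1$.

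If $(i,j)=(1,n)$, the condition is $e_n\in V$. Then $\mathcal S_{1n}\cong \mathrm{pGr}(1,n-1)\times\mathrm{pGr}(1,1)$ with $r+s=n$. The first line of Theorem~\ref{thm:prod_Grassmannian} gives GD degree $\binom{\min\{1,2\}}{1}=1$.

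\textbf{Product case $j=n$, $j>i+1$, $i>1$.} Now $V=\langle e_n\rangle\oplus L$ with $L\subseteq\langle e_i,\ldots,e_{n-1}\rangle$. So $\mathcal S_{in}=\mathrm{pGr}(1,n-i)\times\mathrm{pGr}(1,1)$ with $r+s=n-i+1<n$ and $r=n-i\le n-2$. The middle case of Theorem~\ref{thm:prod_Grassmannian} gives $\binom{\min\{r,2\}}{1}\binom{1}{1}=2$, using $r\ge 2$.

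\textbf{Genuine case $i+1<j<n$.} Here I would not expect a product structure, and I would imitate the envelope argument of Section~\ref{sec:positroids}.

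First, since $\mathcal S_{ij}\subseteq \mathrm{pGr}(2,W)$, only the compression $Q_W$ of $Q$ to $W$ enters the objective. By Proposition~\ref{prop:SVD} and Lemma~\ref{lem:eigenspaces}, $Q_W$ generically has rank $2$ with two distinct eigenvalues in $(0,1)$. The same holds for the block of $Q_W$ on $W'$, since $\dim W'=n-j+1\ge 2$, and for the block on $W'':=\langle e_i,\ldots,e_{j-1}\rangle$, since $\dim W''\ge 2$.

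Next, on the dense open set where $\dim V\cap W'=1$, write $P=aa^\top+bb^\top$ with $b\in W'$, $a\in W$, $a^\top b=0$, and $a^\top a=b^\top b=1$. For fixed $b$, the inner problem is over the lines of $W\cap b^\perp$. Its critical points are the eigenvectors of the compression of $Q_W$ to $b^\perp$. As in Lemma~\ref{lem:unique-cp}, exactly one of these avoids $\mathrm{cut}(Q)$: since that compression has rank $2$, only one choice keeps $\operatorname{rank}(QP)=2$.

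Lemma~\ref{lem:envelope} then reduces the problem to maximizing the envelope $g(b)$ over unit $b\in W'$. I would compute $g$ as in the proof for $r\ge 3$ in Section~\ref{sec:positroids}. Write $Z_W$ for the rows of the orthonormal factor $Z$ of $Q=ZZ^\top$ indexed by $W$; then $\mathrm{trace}(PQ)$ on the non-cut branch should simplify to $g(b)=\mathrm{trace}(Q_W)-(\text{a term depending only on } b^\top Q_{W'}b)$. The critical $b$ are then the eigenvectors of $Q_{W'}$.

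Finally, I would discard the extraneous critical points. Eigenvectors of $Q_{W'}$ with eigenvalue $0$ force $V\cap\ker Q\neq0$, exactly as in the proof of Lemma~\ref{lem:cut-locus-for-products}. This leaves the two eigenvectors with eigenvalues in $(0,1)$. Up to the sign of $b$, these give exactly two GD critical points, each with a unique non-cut $a$.

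I must also rule out critical points on the boundary stratum $V\subseteq W'$. Such points lie either in the singular locus of $\mathcal S_{ij}$, or in $\mathrm{cut}(Q)$ generically, using $\dim W'\ge 2$ and the rank count of Lemma~\ref{lem:eigenspaces}.

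\textbf{Main obstacle.} The hardest step is the explicit computation of the envelope $g(b)$ and the verification that its critical points are exactly the eigenvectors of $Q_{W'}$. Showing that neither of the two surviving points falls into $\mathrm{cut}(Q)$ is part of the same difficulty. I would first try the normal form of Proposition~\ref{prop:SVD} for the splitting $W=W''\oplus W'$, where all the compressions become diagonal and the checks are explicit. A polynomial identity then extends the conclusion to generic $Q$, as in the proof of Lemma~\ref{lem:n>r+s}.
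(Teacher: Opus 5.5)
Your cases $j=i+1$, $(i,j)=(1,n)$ and $j=n$ are correct. They are what underlies the paper's appeal to Corollary~\ref{cor:matroidED}, which is read off from Theorem~\ref{thm:prod_Grassmannian}. The gap is in the case $i+1<j<n$ with $i>1$, i.e.\ when $\mathcal S_{ij}$ has loops.

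Write $Q=ZZ^\top$ with $Z^\top Z=\mathrm{Id}_2$. Let $Z_L,Z_{W''},Z_{W'},Z_W$ be the rows of $Z$ indexed by $\{1,\dots,i-1\}$, $W''$, $W'$, $W$. Fix a unit $b\in W'$ and set $d=Z_W^\top b$. Suppose $a\in W\cap b^\perp$ is an eigenvector of the compression of $Q_W$ to $b^\perp$ with eigenvalue $\nu\neq 0$. Then $c=Z_W^\top a$ satisfies $(Z_W^\top Z_W-dd^\top)c=\nu c$, and $\langle a,b\rangle\in\mathrm{cut}(Q)$ if and only if $c\parallel d$.

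\textbf{When $i=1$.} Here $Z_W^\top Z_W=\mathrm{Id}_2$, so $d$ is itself an eigenvector and exactly one branch survives. Its value is $1+b^\top Q_{W'}b$, and your argument goes through. This is a legitimate alternative to the paper's route.

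\textbf{When $i>1$.} Now $Z_W^\top Z_W=\mathrm{Id}_2-Z_L^\top Z_L$ is not scalar, $d$ is generically not one of its eigenvectors, and \emph{both} branches avoid the cut locus. So the uniqueness hypothesis of Lemma~\ref{lem:envelope} fails. Lemma~\ref{lem:unique-cp} does not transfer: there the inner variable is a whole $2$-plane, so the rank condition singles out the image of a rank-$2$ matrix, whereas here it is a line with $b$ frozen. The branch values also depend on $d^\top(Z_W^\top Z_W)^{-1}d$, not only on $b^\top Q_{W'}b$.

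This is not just a harder computation: your intended conclusion is false for $i>1$. At an actual GD critical point, $V\cap W'$ is spanned by $b=Z_{W'}w^\perp$, where $w^\perp$ is an eigenvector of $Z_{W''}^\top Z_{W''}$. Then $Q_{W'}b\parallel b$ would force $w^\perp$ to be an eigenvector of $Z_{W'}^\top Z_{W'}=\mathrm{Id}_2-Z_L^\top Z_L-Z_{W''}^\top Z_{W''}$, hence of $Z_L^\top Z_L$. That fails for generic $Q$, already for $\mathcal S_{24}\subseteq\mathrm{Gr}(2,5)$. Conversely, the first-order conditions show that no non-cut critical point has $b$ equal to an eigenvector of $Q_{W'}$. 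So your plan would output two points, the right number, that are not critical.

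The repair is to swap the inner and outer variables, which is exactly the paper's route:
\begin{itemize}
\item[(i)] $V\in\mathcal S_{ij}$ if and only if $V\subseteq\langle u\rangle\oplus W'$ for some unit $u\in W''$.
\item[(ii)] For fixed $u$, the problem on $\mathrm{Gr}(2,\langle u\rangle\oplus W')$ has a unique non-cut critical point, namely the image of the rank-$2$ compression. This uses only $\mathrm{rank}(Q)=2$, not $Q^2=Q$.
\item[(iii)] The envelope is $u^\top Q_{W''}u+\mathrm{trace}(Q_{W'})$, so the critical $u$ are the two eigenvectors of $Q_{W''}$ with nonzero eigenvalue.
\end{itemize}
In the paper this is packaged as $\mathcal S_{ij}\cong\mathcal M_{(j-i,1^{n-j+1}),\,i-1}$ followed by Corollary~\ref{cor:GD-matroid}.
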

\begin{proof}
    The Schubert variety $\mathcal S_{ij}$ is isomorphic (by permutation) to the matroid variety $\mathcal M_{\pi, i-1}$ where $\pi = (j - i, 1^{n-j+1})$.
    If $j = n$, then $\pi$ has two parts and the GD degree is $1$ if $i = 1$ and $2$ otherwise by Corollary~\ref{cor:matroidED}. 
    If $j < n$, then $\pi$ has at least three parts and the GD degree is $1$ if $j = i+1$ and $2$ otherwise by Corollary~\ref{cor:GD-matroid}.
\end{proof}
 In addition to being matroid varieties, some Schubert varieties also have the structure of a product of Grassmannians in the sense of Theorem~\ref{thm:prod_Grassmannian}. 

\begin{corollary} \label{cor:Schubert-EDGD}
    The Schubert variety 
    $\mathcal S_{j+1, \ldots, j+i, j+r+1,\ldots, n}$
    with $r = n - k + i - j$
     has ED degree $\binom{r}{i}$ and GD degree
    \begin{align*}
        \begin{cases}
            \binom{\min\{r,k\}}{i} &\textrm{ if }   i \leq j \\
             \binom{k-i+j}{j}
            &\textrm{ if } i > j.
        \end{cases}
    \end{align*}
\end{corollary}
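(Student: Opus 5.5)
The plan is to recognize $\mathcal S_{j+1,\ldots,j+i,\,j+r+1,\ldots,n}$ as a product of two smaller Grassmannians embedded exactly as in Section~\ref{sec:products}, and then read off both degrees from Theorem~\ref{thm:prod_Grassmannian}.

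\emph{Step 1: the row echelon form.} Note that $n-(j+r) = k-i$, so the last $k-i$ pivots fill the final $k-i$ columns completely. In a matrix of the Schubert cell, the rows with pivots $j+r+1,\ldots,n$ are therefore standard unit vectors $e_{j+r+1},\ldots,e_n$. After row reduction, the first $i$ rows vanish in these last $k-i$ columns and in the first $j$ columns. Their entries in columns $j+1,\ldots,j+r$ form an arbitrary $i\times r$ echelon block. Taking closures, the row space of a point of $\mathcal S$ is $W\oplus\langle e_{j+r+1},\ldots,e_n\rangle$ with $W\in \mathrm{Gr}(i,r)$ supported on columns $j+1,\ldots,j+r$. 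Conversely, every such space lies in $\mathcal S$.

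\emph{Step 2: matching the embedding.} In projection coordinates this says $\mathcal S=\mathrm{pGr}(i,r)\times\mathrm{pGr}(k-i,k-i)$, embedded as the block-diagonal matrices $0_{j}\oplus P_{22}\oplus P_{33}$. Here the first block has size $n-(r+s)=j$, and we take $s=k-i$, $\ell=k-i$ in the notation of Section~\ref{sec:products}. In Pl\"ucker coordinates I would confirm this by checking that the ideal \eqref{eq:schub-ideal} agrees with the prime ideal of Proposition~\ref{prop:pluecker} for $R=\{j+1,\ldots,j+r\}$ and $S=\{j+r+1,\ldots,n\}$. Both ideals are prime with the same zero set, so comparing zero sets suffices.

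\emph{Step 3: applying Theorem~\ref{thm:prod_Grassmannian}.} The ED degree is $\binom{r}{i}\binom{k-i}{k-i}=\binom ri$. For the GD degree I would go through the three cases of the theorem.
\begin{itemize}
\item If $j=0$, then $n=r+s$ and the GD degree is $\binom{\min\{k-i,k\}}{k-i}=1$. This agrees with the statement: if $i>0$ it is $\binom{k-i}{0}$, and if $i=0$ it is $\binom{\min\{r,k\}}{0}$.
\item If $j>0$, the condition $r\le n-k$ is equivalent to $i\le j$. In that case we get $\binom{\min\{r,k\}}{i}\binom{k-i}{k-i}=\binom{\min\{r,k\}}{i}$.
\item If $i>j$, we get $\binom{n-r}{k-i}\binom{k-i}{k-i}$. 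Since $n-r=k-i+j$, this equals $\binom{k-i+j}{k-i}=\binom{k-i+j}{j}$.
\end{itemize}

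\emph{Main obstacle: the hypotheses.} I expect the real work to be checking the hypotheses of Theorem~\ref{thm:prod_Grassmannian}, namely $k\le n/2$ and $s\le r$. These can fail here, since $s=k-i$ may exceed $r$ and $k$ may exceed $n/2$. For $s>r$, I would inspect the proofs of Lemmas~\ref{lem:eigenspaces}--\ref{lem:n=r+s}. The $\mathrm{pGr}(k-i,k-i)$ factor is a single point with $P_{33}=\mathrm{Id}$, so its contribution is the trivial choice $H=[k-i]$. Only the linear-independence condition \eqref{eq:linind2} needs to be rechecked, and I expect it to go through verbatim.

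For $k>n/2$, I would first try the duality $P\mapsto \mathrm{Id}_n-P$, which maps $\mathrm{pGr}(k,n)$ isometrically to $\mathrm{pGr}(n-k,n)$ and preserves ED and GD critical points and the cut locus. It also sends a product of Grassmannians to another one. If the resulting indices do not reproduce the same case split, I would instead redo the counting of Lemma~\ref{lem:eigenspaces} directly for this model.
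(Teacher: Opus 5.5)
Your reduction is the one the paper uses. You identify $\mathcal S$ with $\mathrm{pGr}(i,r)\times\mathrm{pGr}(k-i,k-i)$ sitting behind a zero block of size $j$, and then read both degrees off Theorem~\ref{thm:prod_Grassmannian}. Your three case computations are correct whenever $k\le n/2$ and $k-i\le r$.

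For $r<k-i$ you do not need to reopen Lemmas~\ref{lem:eigenspaces}--\ref{lem:n=r+s}. Conjugating by a permutation matrix swaps the two diagonal blocks, so you may apply the theorem with $\mathrm{pGr}(k-i,k-i)$ as the larger factor and $\mathrm{pGr}(i,r)$ as the smaller one. This is what the paper means by applying the theorem ``in both cases $r\ge k-i$ and $r<k-i$''. When $k\le n/2$, the inequality $r<k-i$ forces $j>0$, $i<j$ and $k-i\le n-k$. The middle case of the theorem then gives $\binom{k-i}{k-i}\binom{\min\{r,k\}}{i}$, as claimed.

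The step that fails is your plan for $k>n/2$. Neither duality nor a recount can establish the formula there, because the formula is false in that range. Take $\mathcal S_{2345}\subseteq\mathrm{Gr}(4,5)$, i.e.\ $j=1$, $i=2$, $r=2$.
\begin{itemize}
\item This variety is the single point $\langle e_2,\ldots,e_5\rangle$. For generic $Q$ this point is not in $\mathrm{cut}(Q)$, so its GD degree is $1$.
\item Since $i>j$, the stated formula gives $\binom{3}{1}=3$.
\item Writing the same point with $i=3$, $r=3$ makes the formula give $2$, so it is not even consistent.
\end{itemize}
Note that this is exactly the case $r=i$ that the paper's proof calls trivial.

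Your own duality $P\mapsto \mathrm{Id}_n-P$ would expose the same problem. It sends $\mathcal S_{2356}\subseteq\mathrm{Gr}(4,6)$ to a coordinate permutation of $\mathcal S_{36}\subseteq\mathrm{Gr}(2,6)$, whose GD degree is $2$ by Theorem~\ref{thm:k2-schub}. The formula instead predicts $3$ for $\mathcal S_{2356}$.

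So the corollary must be read with the hypothesis $k\le n/2$ inherited from Theorem~\ref{thm:prod_Grassmannian}; the paper's one-line proof also uses this tacitly. With that hypothesis added, your argument together with the block swap above is complete. Without it, the statement cannot be proved.
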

\begin{proof}
    If $r = i$, the variety is a single point, and the result is trivial. 
    Otherwise, this Schubert variety is embedded in ${\rm pGr}(k,n)$ as ${\rm pGr}(i, r) \times {\rm pGr}(k-i,k-i)$. 
    The result follows from applying Theorem~\ref{thm:prod_Grassmannian} in both cases $r \geq k - i$ and $r< k - i$.
\end{proof}

Theorem~\ref{thm:gd-schubert} states that if the pivot columns $i_1, \ldots, i_k$ are cyclically adjacent, i.e., $i_{j+1} = (i_j\mod n) + 1$ for $j \in [k-1]$, then the corresponding Schubert variety has GD degree 1, and that it is easy to compute this unqiue critical point. 
The next result makes this precise; 
it was proved for the case $k = 2$ in \cite[Theorem 6.6]{FRS}.

\begin{theorem}\label{thm:zero}
    Given a generic data point $Y \in \mathrm{Gr}(k,n)$, the minimizer of \eqref{eq:trace-opt} for the model $\mathcal M = \mathcal S_{i_1 \cdots i_k}$ with cyclically adjacent pivots is obtained by setting the Pl\"ucker coordinates $y_{j_1 \cdots j_{k}}$ satisfying the condition in \eqref{eq:schub-ideal} to zero. 
\end{theorem}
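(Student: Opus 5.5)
The plan is to reduce both cyclic shapes of the pivot set to the product-of-Grassmannians situation of Section~\ref{sec:products}. There I know the GD-critical points explicitly, and I would then check that the unique one is the point described in Plücker coordinates. Write the pivots, after sorting, as either $\{a+1,\dots,a+k\}$ (no wrap-around) or $\{1,\dots,b\}\cup\{a+1,\dots,n\}$ with $b+(n-a)=k$ (wrap-around).

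\emph{Non-wrapping case.} The condition $j_\alpha\ge a+\alpha$ in \eqref{eq:schub-ideal} says exactly that all indices of a nonvanishing coordinate lie in $\{a+1,\dots,n\}$. Hence $\mathcal S_{a+1\cdots a+k}$ is the Grassmannian of $k$-planes in $E=\mathrm{span}(e_{a+1},\dots,e_n)$, that is, $\mathrm{pGr}(k,n-a)$ embedded as in Corollary~\ref{cor:small-grass} after a permutation of coordinates. This permutation is allowed by the $\mathrm{O}(n)$-equivariance. By that corollary the unique GD-critical point is the projection onto the image of the principal block $Q_{EE}$. For generic $Q$ this image is the orthogonal projection $\mathrm{pr}_E(\mathrm{im}\,Q)$ of $Y=\mathrm{im}\,Q$ to $E$, which is $k$-dimensional since $k\le n-a$. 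I then check that for a $k\times n$ matrix $A$ representing $Y$, the matrix obtained by deleting the columns outside $E$ represents $\mathrm{pr}_E(Y)$. Its maximal minors are exactly the $y_J$ with $J\subseteq\{a+1,\dots,n\}$, and these are precisely the coordinates not killed by \eqref{eq:schub-ideal}. This proves the claim in this case.

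\emph{Wrapping case.} In reduced row echelon form, the rows with pivots $a+1,\dots,n$ are $e_{a+1},\dots,e_n$, because there are no columns to the right of their pivots other than other pivot columns. So every point of the cell, and hence of the closure, contains $C:=\mathrm{span}(e_{a+1},\dots,e_n)$. The variety is therefore $\{W\oplus C: W\in\mathrm{Gr}(b,\mathrm{span}(e_1,\dots,e_a))\}\cong\mathrm{pGr}(b,a)\times\mathrm{pGr}(n-a,n-a)$ with $r+s=n$. By Corollary~\ref{cor:Schubert-EDGD} (or directly from Theorem~\ref{thm:prod_Grassmannian} with $\ell=s$), the GD degree is one.

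To identify the critical point I use Lemma~\ref{lem:cut-locus-for-products}. By Lemma~\ref{lemma: eigenvalue1}, the block $Q_{11}$ on $D=\mathrm{span}(e_1,\dots,e_a)$ has eigenvalue $1$ with multiplicity $k+a-n=b$, with eigenspace $F=\mathrm{im}\,Q\cap D$. The lemma forces $F\subseteq\mathrm{im}P_{22}$, and since $\dim F=b$ this gives $W=F$; the second factor is forced. So the critical point is $(Y\cap D)\oplus C$. On the Plücker side, setting to zero all $y_J$ violating \eqref{eq:schub-ideal} keeps exactly the coordinates $y_{J'\cup\{a+1,\dots,n\}}$ with $J'\subseteq[a]$. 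I then choose a basis of $Y$ consisting of a basis of $Y\cap D$ together with $n-a$ vectors whose $C$-parts form an invertible block. Row reduction on the $C$-columns shows that these kept coordinates are, up to one common scalar, the Plücker coordinates of $(Y\cap D)\oplus C$.

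It remains to pass from ``unique GD-critical point'' to ``minimizer'' for real generic $Y$. The real model is compact, so a maximizer of $\mathrm{trace}(PQ)$ exists. It lies in the smooth part, or else I handle the singular locus by noting that the maximum of $\mathrm{trace}(PQ)$ is not attained on $\mathrm{cut}(Q)$ generically. Hence it is critical off the cut locus, and so it equals the unique point found.

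I expect the main obstacle to be this last step: ruling out that the maximizer lies in $\mathrm{cut}(Q)$ or in the singular locus. I would try to compare values directly, showing that the candidate point has strictly larger $\mathrm{trace}(PQ)$ than any critical point that is an extraneous invariant subspace containing a kernel vector of $Q$. A smaller obstacle is the bookkeeping of the Plücker identification in the wrap-around case.
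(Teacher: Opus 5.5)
Your proposal is correct and follows essentially the paper's proof. The non-wrapping case goes through Corollary~\ref{cor:small-grass}: the image of the principal block on $E$ is the row span of the $E$-columns of a basis of $Y$, and its maximal minors are exactly the surviving $y_J$. The wrap-around case goes through Lemmas~\ref{lemma: eigenvalue1} and~\ref{lem:cut-locus-for-products}, which force $W=Y\cap D$; your invertible-$C$-block computation makes explicit the Pl\"ucker identification that the paper only asserts.

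The step from ``unique GD-critical point'' to ``minimizer,'' which the paper does not address, is easier than you fear. Both models are smooth Grassmannians, so the singular locus plays no role. For real generic $Y$ the objective reduces to $\mathrm{trace}(P\,Q_{EE})$, respectively to $\mathrm{trace}(P_W Q_{11})$ plus a constant, with $P_W$ the projection onto $W\subseteq D$. Its maximum is the sum of the $k$, respectively $b$, largest eigenvalues. This maximum is attained only at $\mathrm{im}\,Q_{EE}$, respectively at $F$, since generically $\lambda_k>0=\lambda_{k+1}$, respectively $\lambda_b=1>\lambda_{b+1}$.
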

\begin{proof}
    We first prove the case $\mathcal S_{i+1, \ldots, i+k}$.
    Let $A = [A_1 \,\, A_2] \in \mathbb C^{k \times n}$ be a basis for $\mathrm{im}(Y)$ with $A_1 \in \mathbb C^{k \times i}$ and $A_2 \in \mathbb C^{k \times (n-i)}$.
    The corresponding projection matrix~is 
    \begin{align*}
        Q = \begin{bmatrix}
            A_1^\top (AA^\top)^{-1}A_1 & A_1^\top (AA^\top)^{-1}A_2\\
            A_2^\top (AA^\top)^{-1}A_1 & 
            A_2^\top (AA^\top)^{-1}A_2
        \end{bmatrix}.
    \end{align*}
    By Corollary~\ref{cor:small-grass}, the unique critical point is identified with the embedded $(n-i) \times (n-i)$ projection matrix onto $\mathrm{im}(A_2^\top (AA^\top)^{-1}A_2)$.
    Since $\mathrm{im}(A_2^\top (AA^\top)^{-1}A_2)$ is the row span of $A_2$, the minimizer is the row span of $[0\,\, A_2]$.
    The Pl\"ucker coordinates $x^*_{j_1 \cdots j_k}$ of $[0\,\, A_2]$ are equal to $0$ if $j_1 \leq i$ and $y_{j_1 \cdots j_k}$ otherwise. 
    The result follows.

    We now turn to the case $\mathcal S_{1\cdots i,(n - (k - i)+1)\cdots n}$.
    Since the data $Y$ is generic, it has a basis of the form 
        $A = \begin{bmatrix}
            A_1 & 0\\
            A_2 & \mathrm{Id}_{k-i}
        \end{bmatrix}$
    where $A_1 \in \mathbb C^{i \times (n-k+i)}$ and $A_2 \in \mathbb C^{(k - i) \times (n - k + i)}$.
    We claim the unique critical point is obtained by setting $A_2 = 0$; this corresponds to zeroing out the appropriate Pl\"ucker coordinates of the data.
    By Lemma~\ref{lem:cut-locus-for-products}, the point $A_1 \oplus \mathrm{Id}_{k-i}$ is critical if and only if the rows of $A_1$ are in the eigenspace of $Q_{11}$ with eigenvalue $1$, where $Q_{11}$ is the upper $(n- k + i) \times (n-k+i)$ block of $Q$.
    Indeed, since the rows of $[A_1\,\, 0]$ are in the image of $Q$ by assumption, we have $[A_1\,\, 0]Q = [A_1Q_{11} \,\, A_1Q_{12}^\top] = [A_1\,\, 0]$. 
    Thus $A_1Q_{11} = A_1$.     
\end{proof}
\begin{proof}[Proof of Theorem~\ref{thm:gd-schubert}]
  The GD degree of $\mathcal S_{i+1,\cdots, i+k}$ is $1$ by Corollary~\ref{cor:small-grass}. 
    The GD degree of $\mathcal S_{1\cdots i,(n - (k - i)+1)\cdots n}$ is $1$ by Corollary~\ref{cor:Schubert-EDGD}.
    This proves the first claim. 
    The second claim is immediate from Theorem~\ref{thm:zero}
\end{proof}

\subsubsection*{Acknowledgments}
This material is based upon work supported by the National Science Foundation under Grant No. DMS-2424556 while the first author was in residence at the Institute for Computational and Experimental Research in Mathematics in Providence, RI, during the \emph{Computations on K3 Surfaces and Related Varieties} program.

In addition, the first and second authors are grateful to the Max Planck Institute for Mathematics in the Sciences in Leipzig, Germany, for hosting them in spring and summer of 2026 to work on this project with the third author.

\subsubsection*{AI Usage}
We used GPT 5.5 and Gemini 3.1 Pro, along with GPT-5.6 Sol, Claude Opus 4.8, and GPT-6 Pro accessed via \verb+ScienceBench.ai+ \cite{sciencebench}, to explore potential proof structures and pathways for the statements.
We used LLMs to refine Theorems~\ref{thm:rat-norm}, \ref{thm:defective}, Propositions~\ref{prop:gen-eigen}, \ref{prop:singular-rank-2}, and Lemma \ref{lem:n>r+s}.
In the proofs of Theorem~\ref{thm:table}, Propositions~\ref{prop:SVD}, \ref{prop:pluecker}, and Lemmas~\ref{lem:tech}, \ref{lem:envelope}, LLMs led us to discover results in the literature of which we were not aware.
Finally, the core ideas in the proofs of Theorem~\ref{thm:gd-finite} and Proposition~\ref{prop:matroid-degree}, as well as Example~\ref{ex:nonirred}, came from LLMs.
The ultimate logical derivations, formal verifications, and mathematical text were entirely developed and written by the authors.

\bibliographystyle{plain}
\bibliography{references}

@article {DFRS,
    AUTHOR = {Devriendt, Karel and Friedman, Hannah and Reinke, Bernhard and
              Sturmfels, Bernd},
     TITLE = {The two lives of the {G}rassmannian},
   JOURNAL = {Acta Univ. Sapientiae Math.},
  FJOURNAL = {Acta Universitatis Sapientiae. Mathematica. An International
              Scientific Journal of Sapientia University},
    VOLUME = {17},
      YEAR = {2025},
    NUMBER = {1},
     PAGES = {Paper No. 8, 18},
}

@book{gantmacher,
author = "Felix R. Gantmacher",
title = "The Theory of Matrices",
publisher = "Chelsea Publishing",
address = "New York",
year = "1959",
volumes = "1, 2"
}

@misc{oscar,
  key          = {OSCAR},
  organization = {The OSCAR Team},
  title        = {{OSCAR -- Open Source Computer Algebra Research system,
                  Version 1.3.1}},
  year         = {2025},
  url          = {https://www.oscar-system.org},
  }

@misc{FRS,
      title={Distance Optimization in the {G}rassmannian of Lines}, 
      author={Hannah Friedman and Andrea Rosana and Bernd Sturmfels},
      year={2026},
      note={arXiv:\verb+2601.22843+},
      eprint={2601.22843},
      archivePrefix={arXiv},
      primaryClass={math.AG}
}

@article {LLY,
    AUTHOR = {Lai, Zehua and Lim, Lek-Heng and Ye, Ke},
     TITLE = {{E}uclidean distance degree in manifold optimization},
   JOURNAL = {SIAM J. Optim.},
  FJOURNAL = {SIAM Journal on Optimization},
    VOLUME = {35},
      YEAR = {2025},
    NUMBER = {4},
     PAGES = {2402--2422},
      ISSN = {1052-6234,1095-7189},
   MRCLASS = {14M15 (15A24 58C05 58K05 90C26)},
  MRNUMBER = {4976079},
       DOI = {10.1137/25M1735032},
       URL = {https://doi.org/10.1137/25M1735032},
}

@article {PS,
    AUTHOR = {Pratt, Elizabeth and Sturmfels, Bernd},
     TITLE = {The {C}how-{L}am form},
   JOURNAL = {J. Symbolic Comput.},
  FJOURNAL = {Journal of Symbolic Computation},
    VOLUME = {131},
      YEAR = {2025},
     PAGES = {Paper No. 102450, 18},
}

@article {FH,
    AUTHOR = {Friedman, Hannah and Ho\c{s}ten, Serkan},
     TITLE = {Grassmann and flag varieties in linear algebra, optimization,
              and statistics: an algebraic perspective},
   JOURNAL = {Adv. in Appl. Math.},
  FJOURNAL = {Advances in Applied Mathematics},
    VOLUME = {180},
      YEAR = {2026},
     PAGES = {Paper No. 103130, 24},
}

@article {DHOST,
    AUTHOR = {Draisma, Jan and Horobe\c{t}, Emil and Ottaviani, Giorgio and
              Sturmfels, Bernd and Thomas, Rekha R.},
     TITLE = {The {E}uclidean distance degree of an algebraic variety},
   JOURNAL = {Found. Comput. Math.},
  FJOURNAL = {Foundations of Computational Mathematics. The Journal of the
              Society for the Foundations of Computational Mathematics},
    VOLUME = {16},
      YEAR = {2016},
    NUMBER = {1},
     PAGES = {99--149},
}

@article {dukes,
    AUTHOR = {Dukes, W. M. B.},
     TITLE = {On the number of matroids on a finite set},
   JOURNAL = {S\'em. Lothar. Combin.},
  FJOURNAL = {S\'eminaire Lotharingien de Combinatoire},
    VOLUME = {51},
      YEAR = {2004/05},
     PAGES = {Art. B51g, 12},
      ISSN = {1286-4889},
   MRCLASS = {05B35},
  MRNUMBER = {2119297},
MRREVIEWER = {Anna\ de Mier},
}

@article {BZA,
    AUTHOR = {Bendokat, Thomas and Zimmermann, Ralf and Absil, P.-A.},
     TITLE = {A {G}rassmann manifold handbook: basic geometry and
              computational aspects},
   JOURNAL = {Adv. Comput. Math.},
  FJOURNAL = {Advances in Computational Mathematics},
    VOLUME = {50},
      YEAR = {2024},
    NUMBER = {1},
     PAGES = {Paper No. 6, 51},
}

@book{vakil, 
author = {Vakil, Ravi}, 
title = {The Rising Sea: Foundations of Algebraic Geometry}, 
publisher = {Princeton University Press}, 
year = {2025}
}

@article {PW94,
    AUTHOR = {Paige, C. C. and Wei, M.},
     TITLE = {History and generality of the {${\rm CS}$} decomposition},
   JOURNAL = {Linear Algebra Appl.},
  FJOURNAL = {Linear Algebra and its Applications},
    VOLUME = {208/209},
      YEAR = {1994},
     PAGES = {303--326},
}

@misc{LR,
      title={The {G}rassmann distance complexity}, 
      author={Antonio Lerario and Andrea Rosana},
      year={2024},
      eprint={2411.16589},
      note={arXiv:\verb+2411.16589+},
      archivePrefix={arXiv},
      primaryClass={math.DG},
      url={https://arxiv.org/abs/2411.16589},
}

@book {LB,
    AUTHOR = {Lakshmibai, V. and Brown, Justin},
     TITLE = {The {G}rassmannian {V}ariety},
    SERIES = {Developments in Mathematics},
    VOLUME = {42},
 PUBLISHER = {Springer, New York},
      YEAR = {2015},
     PAGES = {x+172},
      ISBN = {978-1-4939-3081-4; 978-1-4939-3082-1},
   MRCLASS = {14M15 (13P10 14M12)},
  MRNUMBER = {3408060},
MRREVIEWER = {Li\ Li},
       DOI = {10.1007/978-1-4939-3082-1},
       URL = {https://doi.org/10.1007/978-1-4939-3082-1},
}

@book {Oxley,
    AUTHOR = {Oxley, James G.},
     TITLE = {Matroid {T}heory},
    SERIES = {Oxford Science Publications},
 PUBLISHER = {The Clarendon Press, Oxford University Press, New York},
      YEAR = {1992},
}

@misc{CL,
      title={Singular matroid realization spaces}, 
      author={Daniel Corel and Dante Luber},
      year={2023},
      eprint={2307.11915},
      archivePrefix={arXiv},
      note={arXiv:\verb+2307.11915+},
      primaryClass={math.AG},
      url={https://arxiv.org/abs/2307.11915},
}

@misc 
{sciencebench,
author       = {Christian Stump},
title         = {ScienceBench},
howpublished  = {\url{http://math.sciencebench.ai}},
url           = {http://math.ScienceBench.ai},
note          = {Accessed: \today},
}

@book {3264,
    AUTHOR = {Eisenbud, David and Harris, Joe},
     TITLE = {3264 and {A}ll {T}hat---a {S}econd {C}ourse in {A}lgebraic {G}eometry},
 PUBLISHER = {Cambridge University Press, Cambridge},
      YEAR = {2016},
     PAGES = {xiv+616},
      ISBN = {978-1-107-60272-4; 978-1-107-01708-5},
   MRCLASS = {14-01 (14C15 14M15 14N10)},
  MRNUMBER = {3617981},
MRREVIEWER = {Arnaud\ Beauville},
       DOI = {10.1017/CBO9781139062046},
       URL = {https://doi.org/10.1017/CBO9781139062046},
}

@book {MS74,
    AUTHOR = {Milnor, John W. and Stasheff, James D.},
     TITLE = {Characteristic {C}lasses},
    SERIES = {Annals of Mathematics Studies},
    VOLUME = {76},
 PUBLISHER = {Princeton University Press, Princeton, NJ; University of Tokyo
              Press, Tokyo},
      YEAR = {1974},
     PAGES = {vii+331},
   MRCLASS = {57-01 (55-02 55F40 57D20)},
  MRNUMBER = {440554},
MRREVIEWER = {F.\ Hirzebruch},
}

@misc{postnikov, 
title = {Total positivity, {G}rassmannians, and networks},
author = {Alexander Postnikov}, 
year={2006},
      eprint={0609764},
      archivePrefix={arXiv},
      note={arXiv:\verb+math/0609764+},
      primaryClass={math.CO},
      url={https://arxiv.org/abs/math/0609764},
}

@article {Ily92,
    AUTHOR = {Ilyushechkin, N. V.},
     TITLE = {The discriminant of the characteristic polynomial of a normal
              matrix},
   JOURNAL = {Mat. Zametki},
  FJOURNAL = {Matematicheskie Zametki},
    VOLUME = {51},
      YEAR = {1992},
    NUMBER = {3},
     PAGES = {16--23, 143},
}

@book {Stu02,
    AUTHOR = {Sturmfels, Bernd},
     TITLE = {Solving {S}ystems of {P}olynomial {E}quations},
    SERIES = {CBMS Regional Conference Series in Mathematics},
    VOLUME = {97},
 PUBLISHER = {Conference Board of the Mathematical Sciences, Washington, DC;
              by the American Mathematical Society, Providence, RI},
      YEAR = {2002},
     PAGES = {viii+152},
}

\bigskip

\noindent
\footnotesize \textbf{Authors' addresses:}

\medskip 
\noindent Hannah Friedman, ICERM, USA  \hfill {\tt hannahfriedman@berkeley.edu}
\\
\noindent Serkan Ho\c{s}ten,
San Francisco State University, USA
\hfill {\tt serkan@sfsu.edu}
\\
\noindent Andrea Rosana, MPI MiS Leipzig, Germany \hfill  {\tt andrea.rosana@mis.mpg.de}
\end{document}